\begin{document}

 \baselineskip 16.6pt
\hfuzz=6pt

\widowpenalty=10000

\renewcommand{\theequation}
{\thesection.\arabic{equation}}

\def\SL{\sqrt H}

\newcommand{\mar}[1]{{\marginpar{\sffamily{\scriptsize
        #1}}}}

\newcommand{\as}[1]{{\mar{AS:#1}}}

\numberwithin{equation}{section}

\newtheorem{theorem}{Theorem}[section]
\newtheorem{prop}[theorem]{Proposition}
\newtheorem{cor}[theorem]{Corollary}
\newtheorem{definition}[theorem]{Definition}
\newtheorem{lemma}[theorem]{Lemma}
\newtheorem{claim}[theorem]{Claim}
\newtheorem{condition}[theorem]{Condition}
\newtheorem{question}[theorem]{Question}
\newtheorem{conjecture}{Conjecture}
\newtheorem*{theorem*}{Theorem}

\theoremstyle{remark}
\newtheorem{example}[theorem]{Example}
\newtheorem{remark}[theorem]{Remark}
\newtheorem{remark*}{Remark}

\newcommand{\cA}{\mathcal{A}}
\newcommand{\cB}{\mathcal{B}}
\newcommand{\cC}{\mathcal{C}}
\newcommand{\cD}{\mathcal{D}}
\newcommand{\cF}{\mathcal{F}}%使用\cM命令代替\mathcal{M}命令，简化代码
\newcommand{\cH}{\mathcal{H}}
\newcommand{\cK}{\mathcal{K}}
\newcommand{\cL}{\mathcal{L}}
\newcommand{\cM}{\mathcal{M}}
\newcommand{\cN}{\mathcal{N}}
\newcommand{\cP}{\mathcal{P}}
\newcommand{\cS}{{\mathcal{S}}}
\newcommand{\cU}{\mathcal{U}}
\newcommand{\cX}{\mathcal{X}}
\newcommand{\dsone}{\mathds{1}}
\newcommand{\spa}{\mathrm{span}}
\newcommand{\dom}{\mathrm{Dom}}
\newcommand{\sot}{\mathrm{SOT}}
\newcommand{\sr}{\mathrm{SR}}
\newcommand{\loc}{\mathrm{loc}}
\newcommand{\bB}{\mathbb{B}}
\newcommand{\bC}{\mathbb{C}}
\newcommand{\bD}{\mathbb{D}}
\newcommand{\bE}{\mathbb{E}}
\newcommand{\bF}{\mathbb{F}}
\newcommand{\bG}{\mathbb{G}}
\newcommand{\bN}{\mathbb{N}}
\newcommand{\bR}{\mathbb{R}}
\newcommand{\bZ}{\mathbb{Z}}
\newcommand{\bT}{\mathbb{T}}
\newcommand{\di}{\mathrm{div}}
\newcommand{\ri}{\mathrm{i}}

\newcommand{\fU}{\mathfrak{U}}
\newcommand{\fC}{\mathfrak{C}}
\newcommand{\ad}{\mathrm{ad}}

\newcommand{\ctimes}{\rtimes_\theta}
\newcommand{\mtx}[4]{\left(\begin{array}{cc}#1&#2\\#3&#4\end{array} \right)}
\newcommand{\dvarphi}[2]{D^{(#1)}(\varphi_{#2})}
\newcommand{\xpsi}[2]{\psi^{[#1]}_{#2}(\omega)}
\newcommand{\xPsi}[2]{\Psi^{[#1]}_{#2}}
\newcommand{\dt}[1]{\partial_t^{(#1)}}
\newcommand{\vN}{vN(G)}
\newcommand{\add}[1]{\quad \text{ #1 } \quad}
\newcommand{\xp}{\rtimes_\theta }
\newcommand{\crossa}{\cM \rtimes_\theta G}
\newcommand{\uu}[2]{u_{#1}^{(#2)}}

\newcommand{\Lp}{{L_p(vN(G))}}
\newcommand{\Lplcr}[1]{{L_p(#1;\ell_2^{cr})}}
\newcommand{\Lplc}[1]{{L_p(#1;\ell_2^{c})}}
\newcommand{\Lplr}[1]{{L_p(#1;\ell_2^{r})}}
\newcommand{\Lpinfty}[1]{{L_p(#1;\ell_\infty)}}
\newcommand{\Lpli}[1]{{L_p(#1;\ell_1)}}
\newcommand{\Lpco}[1]{{L_p(#1;c_0)}}
\newcommand{\Lpcor}[1]{{L_p(#1;c_0)}}
\newcommand{\LpR}{{L_p(\bR^d;L_p(\cN))}}

\newcommand\hDelta{{\bf L}}
\def\RN {\mathbb{R}^n}
\renewcommand\Re{\operatorname{Re}}
\renewcommand\Im{\operatorname{Im}}

\newcommand{\mc}{\mathcal}
\newcommand\D{\mathcal{D}}
\def\hs{\hspace{0.33cm}}
\newcommand{\la}{\alpha}
\def \l {\alpha}
\def\ls{\lesssim}
\def\su{{\sum_{i\in\bN}}}
\def\lz{\lambda}
\newcommand{\eps}{\varepsilon}
\newcommand{\pl}{\partial}
\newcommand{\supp}{{\rm supp}{\hspace{.05cm}}}
\newcommand{\x}{\times}
\newcommand{\lag}{\langle}
\newcommand{\rag}{\rangle}
\newcommand\wrt{\,{\rm d}}
\newcommand{\botimes}{\bar{\otimes}}
\def\bN{{\mathbb N}}
\def\bx{{\mathbb X}}
\def\fz{\infty}
\def\r{\right}
\def\lf{\left}
\def\cm{{\mathcal M}}
\def\cs{{\mathcal S}}
\def\bR{{\mathbb R}}
\def\lm{\mathcal{N}}
\def\rd{\mathbb{R}^d_{\theta}}
\def\rii{\mathbb{R}^2_{\theta}}
\def\rn{{{\bR}^n}}
\def\bZ{{\mathbb Z}}
\def\cl{{\mathcal L}}
\def\cq{{\mathcal Q}}
\def\cd{{\mathcal D}}
\def\lt{\lambda_{\theta}}
\def\wt{W_{\theta}}
\def\schwartz{\mathcal{S}(\mathbb{R}^d)}
\def\temper{\mathcal{S}'(\mathbb{R}^d)}
\def\schwartzt{\mathcal{S}(\mathbb{R}_{\theta}^d)}
\def\tempert{\mathcal{S}'(\mathbb{R}_{\theta}^d)}
\def\sotto{\overset{\sot}{\rightarrow}}
\def\srto{\overset{\sr}{\rightarrow}}
\def\wto{\overset{w(L_p)}{\rightarrow}}
\theoremstyle{assumption}
\newtheorem{assumption}[theorem]{Assumption}

\title[The nonlinear estimates on quantum Besov spaces]{The nonlinear estimates on quantum Besov spaces}

\author{Deyu Chen}
\address{Deyu Chen, Institute for Advanced Study in Mathematics, Harbin Institute of Technology, Harbin 150001, China}
\email{1201200317@stu.hit.edu.cn}

\author{Guixiang Hong}
\address{Guixiang Hong, Institute for Advanced Study in Mathematics, Harbin Institute of Technology, Harbin 150001, China}
\email{gxhong@hit.edu.cn}

  \date{\today}
 \subjclass[2010]{46L52, 42B37, 47H30}
\keywords{quantum Besov spaces, superposition operators, difference characterization, operator integrals, nonlinear estimates, Allen-Cahn equations.}

\begin{abstract}
The superposition operators have been widely studied in nonlinear analysis, which are essential for the well-posedness theory of nonlinear equations. In this paper, we investigate the boundedness estimates of superposition operators with non-smooth symbols on quantum Besov spaces, which significantly generalize  McDonald's results \cite{McNLE} for infinitely differentiable symbols and have rich applications in the well-posedness theory of noncommutative PDEs.  The ingredients in the proof involve a novel quantum chain rule and nonlinear interpolation. As a byproduct, we prove the equivalence of the two descriptions of quantum Besov spaces, resolving the conjecture proposed in \cite[Remark 3.16]{McNLE}.
\end{abstract}

\maketitle

\tableofcontents\newpage

\section{Introduction}
\setcounter{equation}{0}

As the noncommutative deformation of Euclidean spaces $\bR^d$, quantum Euclidean spaces $\rd$ serve as prototypical examples of noncommutative non-compact manifolds. The original definition of quantum Euclidean spaces was introduced by Groenewold \cite{Groenewold} and Moyal \cite{Moyal}, who regarded them as operator formulations of classical Euclidean spaces but endowed with a noncommutative coordinate system $\{x_j\}_{j=1}^d$ satisfying $[x_i, x_j] = \mathrm{i} \theta_{i, j}$. In this framework, the classical product is replaced by the Moyal product. In recent research, many authors have also considered the quantum Euclidean spaces as the von Neumann algebra generated by the twisted left-regular representation on $L_2(\bR^d),$ see for example \cite{GJP2021}.

%Their definitions were later abstracted into the following canonical commutation relation (CCR) algebras: given a real antisymmetrical $d\times d$ matrix $\theta,$ then $\rd$ can be seen as an associative $\ast$ algebra generated by the self-adjoint operators $x_1,\dots,x_d,$ where $x_1,\dots,x_d$ satisfy the CCR relation
%$$[x_j,x_k]=x_jx_k-x_kx_j=\ri \theta_{j,k}.$$ Equivalently, $\rd$ can also be defined as a von Neumann algebra generated by a group of unitary operators on $L_2(\bR^d).$ This definition are frequently used in the relevant articles (see e.g \cite{GJM2022,SZ2018,SZ2023}).
\subsection{Background}

\subsubsection{PDE theory on quantum Euclidean spaces}
The quantum Euclidean spaces appeared frequently in the mathematical physics literature, including in instantons theory \cite{NekrasovSchwarz}, noncommutative string theory \cite{SW1999}, and noncommutative field theory \cite{DouglasNekrasov}, and also play important roles in the noncommutative harmonic analysis in a series of works \cite{GJM,Hong,HLW2023,hlw25,LSZ,LMSZ,MSX}. From the quantum machine theory, Seiberg and Witten \cite{SW1999} developed the noncommutative gauge theory, which has led to various studies of partial differential equations (abbreviated as PDEs) on quantum Euclidean spaces, such the Maxwell equations \cite{GJPP2001}, Hamiltonian dynamics \cite{BSS2014,Ma2018}, continuity equations \cite{JPP} and Euler equations \cite{DG2016}. Besides these fluid equations, Hamanaka and Toda \cite{ht02,ht03,ht06} also derived from the noncommutative Yang-Mills theory and Lax representation the quantization of many integrable systems, such as Burgers equations, KdV equations, mKdV equations and nonlinear Schr{\"o}dinger equations. Although many authors studied these noncommutative equations from different perspectives, there are nearly no result on the theory of well-posedness of these PDEs  due to the bad behaviors of Moyal product on function spaces. These challenges strongly motivate us to study the well-posedness theory of PDEs on quantum Euclidean spaces.

% As a mathematical object, they are standard model examples of the non-compact manifolds  \cite{cgrs14,ggisv04} in Connes' noncommutative geometry theory. On the other hand, motivated by the noncommutative martingale theory and harmonic analysis \cite{Jun02,JuXu07,Mei07,NeRi11,PiXu97},  there have appeared several fundamental works \cite{FHW25,GJM2022,gjp21,hlw23,hlw25,jmpx21,LSZ20,MSX20} on harmonic analysis over quantum Euclidean spaces since the seminal one \cite{CXY13} in 2013 on quantum tori (see also \cite{XXY18}).
%  In particluar, when $\theta=\hbar\left(\begin{array}{ccc}0 & -I_{n} \\ I_{n} & 0 \end{array}\right)$ ($\hbar$ is the Planck constant), Stone-von Neumann theorem implies that $\rd$ is unitarily equivalent to the well-known quantum phase space. Hence it can be seen as a generalized phase space (Moyal space). Like the phase space, the main advantage of this von Neumann algebra $\rd$ is the ability to define various analogs of the classical harmonic analysis tools, including differential calculus, Fourier multiplier and function spaces (such as quantum Besov spaces and Sobolev spaces). 

Fortunately, exploiting the noncommutative harmonic analysis technique, there have been some progress on the well-posedness theory of noncommutative PDEs. As some direct applications of noncommutative harmonic analysis, Gonz\'{a}les-P\'{e}rez,
Junge and Parcet \cite{GJP2021} studied the $L_p$-regularity of linear elliptic pseudodifferential
equations; Fan, Hong and Wang \cite{FHW25} obtained the sharp endpoint $L_p$ estimates of the free quantum Schr\"odinger equations, and Hong, Lai and Wang \cite{hlw25} also established a local smoothing estimate of the free wave equations on 2D quantum Euclidean space. Beyond theses linear estimates, McDonald \cite{McNLE} investigated the well-posedness of Allen-Cahn equations $$\partial_tu=\Delta u+F(u)$$ with infinitely differentiable symbols $F$ on quantum Besov space, as well as nonlinear Schr{\"o}dinger equations and Navier-Stokes equations, utilizing the nonlinear estimates of superposition operators with infinitely differentiable symbols. Ruzhansky et al. \cite{RST,RST25,MRST} developed some nonlinear estimates or Sobolev type inequalities to study nonlinear evolution equations on quantum Euclidean spaces in an abstract way. In our previous paper \cite{CHWW}, the authors also investigated the nonlinear estimate of bilinear form $B(u,v)=\mathbb P((u\cdot \nabla)v)$ with Leray projection $\mathbb P$ by a transference technique, which yields the quantum analogues of Ladyzhenskaya \cite{l69} and Kato's \cite{k84} well-known results for Navier-Stokes equations. 

The nonlinear terms in the aforementioned PDEs are not particularly difficult to deal with. However, there exist many nonlinear PDEs with more complicated nonlinear terms. For instance, in classical model of Allen-Cahn equations, the symbols $F$ are not expected to be infinitely differentiable, or even high-order differentiable (cf. e.g. \cite{Bartels,Lunardi}). Hence it is natural to ask whether the infinitely differentiable condition in \cite[Theorem 7.4]{McNLE} can be reduced.
In this paper, we aim to address such problems in the noncommutative setting by developing the nonlinear estimates of superposition operators with non-smooth symbols, which have been widely studied in the classical theory of nonlinear analysis and PDEs.

\subsubsection{Nonlinear superposition operators}
Let us first recall the theory of superposition operators (also called Nemytskij operators). Recalling in the classical case, given a Borel function $F:\bR\to \bR,$ we can define the related superposition operator $$T_F:u\mapsto F(u),$$ where $u$ belongs to some real-valued Banach function spaces $X$ on $\bR^d$ and $F(u)(x)=F(u(x))$ for any $x\in \bR^d.$ In the noncommutative setting, we will define $T_F:u\mapsto F(u)$ as the Borel functional calculus when $u$ is self-adjoint. We refer to $F$ as the symbol of $T_F$. To the end, we sometimes abuse the notation $F$ itself to denote the superposition $T_F$ whenever no confusion arises. In this paper, we mainly concern the following properties of superposition operators, which have various applications in the well-posedness theory of nonlinear PDEs (cf. e.g. \cite{D2022,MZ2004,RS1996}):
\begin{enumerate}
    \item[\rm{(i)}] Closedness: $T_F(X)\subset X.$
    \item[\rm{(ii)}] Boundedness: For every $M>0,$ there exists a constant $K_M$ such that $\|T_F(u)\|_{X}\le K_M\|u\|_X$ for any $u\in B_X(0,M).$
    \item[\rm{(iii)}]Local Lipschitzness: For every $M>0,$ there exists $K_M$ such that $\|T_F(u)-T_F(v)\|_{X}\le K_M\|u-v\|_X$ for any $u,v\in B_X(0,M),$
\end{enumerate}
where $B_X(0,M)$ denotes the open ball $\{u\in X:\|u\|_X< M\}.$ We can regulate the boundedness and local Lipschitzness to the following estimates:
\begin{align}
 \label{boundedness}   & \|T_F(u)\|_{X}\le C(F,\|u\|_{X},X)\|u\|_{X}.\\
 \label{local Lip}   & \|T_F(u)-T_F(v)\|_{X}\le C(F,X,\|u\|_{X},\|v\|_{X})\|u-v\|_{X},
\end{align}
The inequalities \eqref{boundedness} and \eqref{local Lip} are referred to as the {\it boundedness estimate} and the {\it local Lipschitz estimate} of $T_F$, respectively, which serve as the main analytical tools in our study of the well-posedness of nonlinear PDEs

There have been many well-known results concerning the referred mapping properties and associated nonlinear estimates in various function spaces, such as Lebesgue spaces \cite{AZ1990}, Sobolev spaces \cite{Bourdaud1991} and Tribel-Lizorkin spaces \cite{BMS2008}. In particular, for the Besov spaces $B_{p,q}^s(\bR^d),s>\frac dp,$ as well as  $B_{p,q}^s(\bR^d)\cap L_\infty(\bR^d),s>0,$ Peetre \cite{Peetre1970} obtained the first result in 1970, who proved that $T_F$ is closed on Besov spaces when $F$ is infinitely differentiable and $F(0)=0.$
%It is worth to Note that that in 1981 Meyer  gave a elegant method to study the boundedness of $T_F$ with smooth symbol $F,$ which was first used to deal with the Bessel potential spaces $H_p^s\, (s>\frac dp).$ This method is based on the pseudodifferential operator theory and the following Meyer decomposition: \begin{align}\label{Meyer de}
%    T_F(u)=m(F,u)u+r(u),
%\end{align} 
%where $m(F,u)$ is a pseudodifferential operator and $r(u)$ is a smooth remainder.
Subsequently, the related boundedness estimates were established by Meyer \cite{Meyer1981} and Runst \cite{Runst1985}, utilizing Meyer's decomposition and the mapping properties of pseudodifferential operators on Besov spaces.
%Meyer's method  on $B_{p,q}^s\cap L_{\infty}$ and proved when $p,q\in (0,\infty],s>\max(\frac{d}{\min(1,\,p)}-d,\,0)$ and $F$ is smooth with $F(0)=0,$ $T_F$ is bounded with the estimate
%\begin{align}\label{Meyer result}
%\|F(u)\|_{B_{p,q}^s}\lesssim_{F,\|u\|_{\fz}}\|F(u)\|_{B_{p,q}^s}.
%\end{align}
Hereafter, Runst \cite{Runst1986} studied the case when $F$ belongs in the H{\"o}lder space $C^r(\bR),r>s.$ Exploiting Taylor expansion of $T_F(f),f\in B_{p,q}^s(\bR^d)\cap L_\fz(\bR^d)$ and Fefferman-Peetre-Stein maximal inequality, he obtained an optimal boundedness estimate. This approach can also be applied to the unbounded case $s\le\frac dp,$ see \cite{Sickel1989}.
%gave another way to study the boundedness, which is based on Taylor expansion and maximal function inequalities. He proved for $\mu>\max(1,\,s)$ and $F\in C^{\mu}(\bR)$ with $F(0)=0,$ where $C^{\mu}(\bR)$ is the H{\"o}lder space, $T_F$ is bounded on $B_{p,q}^s\cap L_{\infty}$ with the estimates 
%\begin{align}
 %      \left \| F(u) \right \|_{B_{p,q}^s}\lesssim_{F}\left \| u \right \|_{B_{p,q}^s} (1+\left \| u \right \|_{\infty}^{\mu-1}). 
%\end{align}
%This method can also be used for unbounded case: $1<s<\frac dp$ (see \cite{Sickel1989} for Tribel-Lizorkin spaces).
Another useful technique was developed by Bourdaud and collaborators \cite{BK1995,BM1991}, who employed the difference characterization
\begin{align}\label{classical eq norm}
    \left \| u \right \|_{B_{p,q}^s}\sim_{s,m,N,p,q} \sum_{i=1}^d\left \| u \right \|_p+\lf(\int_0^{\infty}\lf(t^{-s+N}\sup_{|h|\le t}\lf\|\Delta^m(h,\partial_i^Nu)\r\|_p^q\r) \frac{dt}{t}\r)^{1/q}
\end{align}
of Besov spaces to investigate the Besov spaces of order $0<s<1+\frac 1p$ in the case $F(\cdot)=|\cdot|$ or more generally, $F\in \mathrm{Lip}(\bR).$  Here $\mathrm{Lip}(\bR)$ denotes all the Lipschitz functions on $\bR$ with norm $$\|f\|_{\mathrm{Lip}}:=\sup_{x\neq y}\frac{|f(x)-f(y)|}{|x-y|}.$$
It is also interesting to study the necessary condition for the mapping properties of $T_F.$ We do not introduce details here and some relevant results can be founded in \cite{Bourdaud1992,Bourdaud1993,RS1996,Sickel1997} and references therein.

%\subsubsection{Operator integral theory and nonlinear estmates on quantum Besov space}
In the noncommutative setting, such as a von Neumann algebra $\cM,$ it is difficult to deduce analogous nonlinear estimates even for the noncommutative $L_p$ space. For instance, it is straightforward to derive the Lipschitz estimates 
\begin{align}\label{Lip}
    \|F(u)-F(v)\|_p\le \|F\|_{\mathrm{Lip}}\|u-v\|_p
\end{align}
for $u,v\in L_p(\bR^d)$ by the triangle inequality, while in the noncommutative case when $u,v\in L_p(\cM),$ this became a well-known conjecture proposed by Daletskii and Krein \cite{DK1956}. The main difficulty came from the lack of the notions of "points". Moreover, unlike the classical case, \eqref{Lip} does not hold for $p=1,\fz$ (cf. \cite{Kato}). To address these challenges,
Daletskii and Krein \cite{DK1956}, as well as Birman and Solomyak \cite{Birman-Solomyak-I,Birman-Solomyak-II,Birman-Solomyak-III}, developed an important technique known as double operator integral. Exploiting this technique, Potapov and Sukochev \cite{PS2011} eventually proved that \eqref{Lip} is valid for $1 < p < \infty$ up to a constant depending on $p$, while Peller \cite{Peller1985} obtained a weaker estimate \begin{align}\label{Beses}
    \|F(u)-F(v)\|_p\lesssim_p \|F\|_{\tilde B_{\fz,1}^1}\|u-v\|_p
\end{align}
for $1\le p\le \fz.$ 
% It is worthy to Note that that for $0<p<1,$ McDonald and Sukochev \cite{MS} also proved the relevant Lipschitz estimate with $F\in \mathrm{Lip}(\bR)\cap \dot{B}^{\frac 1p}_{\frac{p}{1-p},p}.$
%Now we come back to the superposition operators. Note that the definition of superposition operators becomes the Borel function calculus instead of composition operation. For the noncommutative case, Besov spaces can be defined on the operator-valued case, quantum tori, quantum Euclidean spaces and so on. It is worth to Note that that the quantum Besov spaces $B_{p,q}^s(\rd),$ i.e. Besov spaces on quantum Euclidean spaces since they have similar structures and properties with the Besov spaces on $\bR^d$ (for details see Section \ref{s2}).
%Hence we can similarly consider the mapping properties and nonlinear estimates of $T_F$ as in the classical case.

We now turn our attention to more intricate operator spaces, such as Besov spaces on quantum Euclidean spaces (abbreviated as quantum Besov spaces). Motivated by the previous works, McDonald \cite{McNLE} utilized the double operator integral technique to generalize Meyer's decomposition \cite{Meyer1981} and Runst's results \cite{Runst1985} to the setting of quantum Besov spaces, which yields the nonlinear estimates \eqref{boundedness} and \eqref{local Lip} when $F$ is smooth. Notably, this represents the first instance in which the nonlinear estimates for noncommutative superposition operators acting on operator spaces with nonzero regularity are investigated. However, for superposition operators with non-smooth symbols, McDonald's method requires a high order differentiability on the symbol $F,$ which seems far away from Runst's \cite{Runst1986} optimal results. Addressing this limitation is a central aim of the present paper, in which we attempt to extend nonlinear estimates to a broader class of symbols in the noncommutative framework.

%Similar to Meyer's method \cite{Meyer1981}, McDonald's method is not proper for the case when $F$ is non-smooth. A natural thought is generalizing Runst's method \cite{Runst1986}. However, It is difficult to establish an appropriate Taylor expansion for an operator function $F(u).$ Thus we deal with this problem from another perspective: via equivalent difference characterization of quantum Besov spaces and multiple operator integrals, which we'll introduce in Section \ref{s3} and Section \ref{s4}.

%In the recent years, with the development of noncommutative analysis, researchers have made much progress on the harmonic analysis theory of $\rd,$ such as the quantum differentiability \cite{MSX}, Cwikel
%estimates \cite{LeSZ-cwikel} and Fourier restriction estimates \cite{HLW2023}.
\subsection{Main results}
In this paper, we establish the nonlinear estimates \eqref{boundedness} for the nonlinear superposition operators with non-smooth symbols, acting on the quantum Besov spaces. These results not only generalize Runst \cite{Runst1986} and Bourdaud's results \cite{Bourdaud1993}, but also improve McDonald's \cite{McNLE} boundedness estimates.
 In what follows, let $B_{p,q}^s(\rd)$ denote the quantum Besov spaces. For $n\in \bN^+,$ let $F^{(n)}$ denote the $n$th derivative of $F$. If $n=0,$ set $F^{(0)}:=F.$ Let
$C^{n}(\bR)$ denote all the $n$th order continuously differentiable complex-valued functions $F$ on $\bR$ and $C_b^n(\bR)$ denote the subspace of $C^n(\bR)$ consisting of $F\in C^n(\bR)$ such that $F^{(k)},k=0,\dots,n$ are bounded, equipped with the norm $$\|F\|_{C^n_b}:=\sup_{0\le k\le n} \lf\|F^{(k)}\r\|_{\fz},$$   Let $\tilde{B}_{\infty,1}^{n}(\bR)$ denote the modified Besov space on $\bR,$ and its definition can be seen in Section \ref{s3}. For $s>0,$ we set $[s]$ to be the integer part of $s$ and $\{s\}=s-[s].$ Moreover, we set $\left \lceil s \right \rceil:=[s]+1.$ 

The first result concerns the global boundedness estimates of $T_F$ on $B_{p,q}^s(\rd),0<s<1,$ which generalizes the sufficient part of Bourdaud's result \cite{Bourdaud1993}. Regarding the necessity part, no noncommutative result is known in the literature.
\begin{theorem}\label{s smaller than 1}
    Let $1\le p,q\le\infty,0<s<1$ and $u\in B_{p,q}^s(\rd)$ be self-adjoint. If $F(0)=0$ and one of the following assumptions holds:
\begin{enumerate}
    \item[\rm{(A)}]$F$ is locally Lipschitz and $u\in L_\infty(\rd);$ 
    \item[\rm{(B)}]$F$ is Lipschitz,
\end{enumerate}
then for $1<p<\infty$ we have \begin{align}
    \|F(u)\|_{B_{p,q}^s}\lesssim_{s,p,q} \|F\|_{\mathrm{Lip}}^{\|u\|_\fz,\,\mathrm{loc}}\|u\|_{B_{p,q}^s};
\end{align}
if $F(0)=0$ and one of the following assumptions holds:
\begin{enumerate}
    \item[\rm{(C)}]$F \in_{\loc} \tilde{B}_{\infty,1}^{1}(\mathbb{R})$ and $u\in L_\fz(\rd);$ 
    \item[\rm{(D)}]$F \in \tilde{B}_{\infty,1}^{1}(\mathbb{R}),$ 
\end{enumerate}
then for $1\le p\le\infty$ we have \begin{align}
    \|F(u)\|_{B_{p,q}^s}\lesssim_{s,p,q} \|F\|_{\tilde{B}_{\infty,1}^{1}}^{\|u\|_\fz,\,\mathrm{loc}}\|u\|_{B_{p,q}^s}.
\end{align}
\end{theorem}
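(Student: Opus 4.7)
My plan is to reduce Theorem \ref{s smaller than 1} to noncommutative operator-Lipschitz estimates via the first-order difference (modulus of continuity) characterization of Besov spaces, which is available precisely in the range $0<s<1$. Concretely, I would invoke the equivalence
\begin{align*}
\|f\|_{B_{p,q}^s(\rd)} \sim \|f\|_{L_p(\rd)} + \left(\int_0^\infty \left(t^{-s}\sup_{|h|\le t}\|\tau_h f - f\|_{L_p(\rd)}\right)^q \frac{dt}{t}\right)^{1/q},
\end{align*}
the quantum analogue of (1.5) with $N=0$, $m=1$, where $\tau_h$ denotes the translation automorphism of $\rd$. Such a characterization on $\rd$ should follow from the Littlewood--Paley definition of $B_{p,q}^s$ together with the $L_p$-isometry of $\tau_h$, and in fact is available from the preceding sections of the paper. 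Under this identification, estimating $\|F(u)\|_{B_{p,q}^s}$ reduces to controlling (i) $\|F(u)\|_p$ and (ii) $\|\tau_h F(u) - F(u)\|_p$. Since $\tau_h$ is a $*$-automorphism, Borel functional calculus commutes with it, giving $\tau_h F(u) = F(\tau_h u)$, so (ii) becomes $\|F(\tau_h u) - F(u)\|_p$.

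The differences are then handled by the operator-Lipschitz estimates recalled in the introduction. In the range $1<p<\infty$, the Potapov--Sukochev inequality
\begin{align*}
\|F(a) - F(b)\|_p \lesssim_p \|F\|_{\mathrm{Lip}}\, \|a-b\|_p
\end{align*}
with $a=\tau_h u$, $b=u$ controls (ii) by $\|F\|_{\mathrm{Lip}}\|\tau_h u - u\|_p$; taking $b=0$ together with $F(0)=0$ controls (i). Substituting into the difference characterization and using the $L_p$-isometry of $\tau_h$ on the right-hand side yields the bound for case (B). For the endpoint range $p\in\{1,\infty\}$ Potapov--Sukochev fails; I would instead use Peller's estimate (1.8) with the $\tilde B_{\infty,1}^1$-norm in place of the Lipschitz norm, which is valid for every $1\le p\le\infty$ and settles case (D).

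The local cases (A) and (C) collapse to (B) and (D) via a cut-off argument. If $\|u\|_\infty\le M$, then $\|\tau_h u\|_\infty = \|u\|_\infty \le M$ because $\tau_h$ is a $*$-automorphism, so the spectra of both $u$ and $\tau_h u$ lie in $[-M,M]$. With the bump $\phi_M$ of the Remark (equal to $1$ on $[-M,M]$), Borel functional calculus gives $F(u) = (F\phi_M)(u)$ and $F(\tau_h u) = (F\phi_M)(\tau_h u)$. The truncated symbol $F\phi_M$ is globally Lipschitz (respectively globally in $\tilde B_{\infty,1}^1$) with norm exactly $\|F\|_{\mathrm{Lip}}^{M,\mathrm{loc}}$ (respectively $\|F\|_{\tilde B_{\infty,1}^1}^{M,\mathrm{loc}}$), so (A) reduces to (B) and (C) reduces to (D) with the constants stated in the theorem.

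I expect the main obstacle to be securing the precise form of the difference characterization on $\rd$ uniformly for all $p,q\in[1,\infty]$ including the endpoints $p,q\in\{1,\infty\}$; once that is in hand, the argument is essentially a direct splicing of the known operator-Lipschitz inequalities onto translation differences. A secondary technical point is the $q=\infty$ case, where the outer integral must be replaced by a supremum in $t$, but the same reasoning applies verbatim.
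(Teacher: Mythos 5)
Your proposal is correct and follows essentially the same route as the paper: reduce via the first-order difference characterization of Theorem \ref{equiv ch} with $N=0$, $m=1$ (valid exactly for $0<s<1$), use that the translation automorphism (denoted $T_h$ in the paper) commutes with Borel functional calculus so that $\Delta_h^1 F(u)=F(T_hu)-F(u)$, and then apply the operator Lipschitz estimate \eqref{Lipes1} for $1<p<\infty$ and \eqref{Lipes2} for $1\le p\le\infty$. The reduction of the local cases (A), (C) to the global cases (B), (D) by the spectral cutoff $\phi_{\|u\|_\infty}$ is the same device the paper states once and for all at the start of Section \ref{s4}.
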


\begin{remark}
Here we explain the meaning of the notions $\in_{\mathrm{loc}}$ and $\|\cdot\|_{X}^{\|u\|_\fz,\,\mathrm{loc}}.$ For a Borel function $F:\bR\to \bR$ and a function space $X(\bR)$ on $\bR,$ we say that $F$ belongs to $X(\bR)$ locally if $F\phi\in X(\bR)$ for every $\phi\in C_c^\infty(\bR).$ We simply write this as $F\in_{\loc}X(\bR).$  Fix a bump function $\phi$ such that $\phi=1$ on $[-1,1],\phi=0$ outside of $[-2,2]$ and $0\le \phi\le 1$ elsewhere. Set $\phi_{M}(\cdot):=\phi(\frac{\cdot}{M}),M>0$ then we define
 \begin{align*}
\|F\|_{X}^{M,\,\mathrm{loc}}:=\|F\phi_{M}\|_{X},
 \end{align*}
 with the convention that $\|F\|_{X}^{M,\,\mathrm{loc}}=\|F\|_X$ for every $M\in\bR_+\cup \{\infty\}$ when $F\in X(\bR).$ 
\end{remark}
  
For $s\ge 1$ and $s>\frac dp,T_F$ may fail to be globally bounded. Instead, we deduce the relevant local boundedness estimates of $T_F$ on $B_{p,q}^s(\rd),s>\frac dp,$ which generalize Runst's main result \cite[Theorem 3]{Runst1986} (see also \cite[Theorem 1]{Sickel1996}) and follows from Theorem \ref{s smaller than 1} when $0<s<1$.
\begin{theorem}\label{main re 1}
    Let $1\le p,q\le\infty,s>\frac dp$ and $u\in B_{p,q}^s(\rd)$ self-adjoint. If $F\in C^{\left \lceil s \right \rceil}(\bR)$ and $F(0)=0,$ then there exists a positive non-decreasing continuous function $h:\bR_+\to\bR_+$ such that for $1<p<\infty,$ we have \begin{align}\label{bound es} \|F(u)\|_{B_{p,q}^s}\le  h\lf(\|u\|_{B_{p,q}^s}\r)\|u\|_{B_{p,q}^s},
    \end{align}
    where for some $C(s,p,q)>0$ we have \begin{align}\label{hdepend}
      h(t)\lesssim_{s,p,q} \|F\|_{X}^{C(s,p,q)t,\,\loc}\lf(1+(2t)^{s}\r), \ \ \ \forall t\in\bR_+.
 \end{align}
    If $F \in_{\loc} \tilde{B}_{\infty,1}^{1}(\mathbb{R})\cap \tilde{B}_{\infty,1}^{\left \lceil s \right \rceil}(\mathbb{R})$ and $F(0)=0,$ then for $1\le p\le\infty$ we have \eqref{bound es}. 
%     Explicitly, \begin{align*}
%     h(t)\lesssim_{p,q} \frac{[s]^{[s]}}{s-\frac dp}\|F\|_{X}^{2C(s,p,q) t,\,\loc}\lf(1+{2t}^{[s]}\r), \ \ \ \forall t\in\bR_+,
% \end{align*}
% where $C(s,p,q)$ denotes the smallest constant $C>0$ such that $\|\cdot\|_{\fz}\le C\|\cdot\|_{B_{p,q}^s},X=C_b^{\lceil s\rceil}$ when $1<p<\infty$ and $X=\tilde B_{\infty,1}^1\cap \tilde B_{\infty,1}^{\lceil s\rceil}$ when $p=1,\infty.$ 
In particular, if $s\notin \bN,$ we have
\begin{align}\label{bound es 1}
\|F(u)\|_{B_{p,q}^s}\lesssim_{s,p,q}\|F\|_{X}^{\|u\|_\fz,\,\loc}\lf(1+\|u\|_{B_{p,q}^s}^{[s]}\r)\|u\|_{B_{p,q}^s}.
\end{align}

% \begin{align}
% \|F(u)\|_{B_{p,q}^s}\lesssim_{s,p,q}\lf\|F\r\|_{\tilde{B}_{\infty,1}^{1}\cap \tilde{B}_{\infty,1}^{\left \lceil s \right \rceil}}^{\|u\|_\fz,\,\mathrm{loc}} h\lf(\|u\|_{B_{p,q}^s}\r)\|u\|_{B_{p,q}^s}.
%     \end{align}
\end{theorem}

 We should point out that this result has some difference from the classical case. First, the classical result only requires $F\in C^r(\bR),r>s,$ whereas our result requires  $F\in C^{\lceil s\rceil}(\bR)$. Second, in the noncommutative setting, the conditions imposed on $F$ for $1<p< \infty$ and for $p = 1, \infty$ differ substantially. This distinction originates from the boundedness properties of operator integrals on noncommutative $L_p$ spaces, which will be discussed in Section \ref{s3}.

When $s\le \frac dp,$ the lack of embedding $B_{p,q}^s(\rd)\hookrightarrow L_\infty(\rd)$ forces us to consider the boundedness estimate of $T_F$ on the multiplication algebra $B_{p,q}^s(\rd)\cap L_\infty(\rd).$ With more regularity assumption on $F,$ we obtain the following local boundedness estimate of $T_F$ on $B_{p,q}^s(\rd)\cap L_\infty(\rd),s>0,$ which follows from Theorem \ref{main re 1} when $s>\frac dp$. Let $W_n(\bR)$ denote the Wiener space, where the definition can also be seen in Section \ref{s3}. 
\begin{theorem}\label{main re 4}
   Let $1\le p,q\le\infty,s>0$ and $u\in B_{p,q}^s(\rd)\cap L_\fz(\rd)$ self-adjoint, if $F\in_{\mathrm{loc}} W_0(\bR)\cap W_{\lceil s\rceil}(\bR)$ and $F(0)=0,$ then we have \begin{align}\label{bound es 4}
\|F(u)\|_{B_{p,q}^s}\lesssim_{s,p,\|u\|_\fz}\|F\|_{W_0\cap W_{\lceil s\rceil}}^{\|u\|_\fz,\,\mathrm{loc}} \|u\|_{B_{p,q}^s}.
    \end{align}
\end{theorem}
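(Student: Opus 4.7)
The plan is to resolve $F$ via its Fourier transform, turning the estimate into a Bochner integral over the unitaries $e^{\mathrm{i}\xi u}$ and then bounding each such unitary in the quantum Besov norm polynomially in $\xi$ by means of the quantum chain rule.

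First, since $F\in_{\loc}W_0(\bR)\cap W_{\lceil s\rceil}(\bR)$ and $u$ is self-adjoint with $\|u\|_\fz<\infty$, set $G:=F\phi_{2\|u\|_\fz}$. Because $\phi_{2\|u\|_\fz}\equiv 1$ on the spectrum of $u$, Borel functional calculus yields $G(u)=F(u)$, while $G$ now lies in $W_0(\bR)\cap W_{\lceil s\rceil}(\bR)$ globally with norm controlled by $\|F\|^{\|u\|_\fz,\,\mathrm{loc}}_{W_0\cap W_{\lceil s\rceil}}$; in particular $\hat G\in L_1(\bR)$ and $(1+|\xi|)^{\lceil s\rceil}\hat G\in L_1(\bR)$. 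Using $F(0)=0$ and Fourier inversion,
\[
F(u)=\frac{1}{2\pi}\int_\bR \hat G(\xi)\bigl(e^{\mathrm{i}\xi u}-1\bigr)\wrt\xi,
\]
and Minkowski's inequality for this Bochner integral reduces the whole theorem to the single uniform unitary estimate
\[
\bigl\|e^{\mathrm{i}\xi u}-1\bigr\|_{B^s_{p,q}(\rd)}\lesssim (1+|\xi|)^{\lceil s\rceil}\,P(\|u\|_\fz)\,\|u\|_{B^s_{p,q}(\rd)},\qquad \xi\in\bR,
\]
where $P$ is a polynomial depending only on $s$, since integrating this inequality against $|\hat G|\wrt\xi$ produces exactly $\|F\|^{\|u\|_\fz,\,\mathrm{loc}}_{W_0\cap W_{\lceil s\rceil}}P(\|u\|_\fz)\|u\|_{B^s_{p,q}}$.

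To establish the unitary estimate I would exploit the difference characterization of $B^s_{p,q}(\rd)$ proved earlier in the paper: for an integer $N\geq\lceil s\rceil$ and sufficiently large difference order $m$, the Besov norm is equivalent to $\|\cdot\|_p$ plus a weighted $L_q$-integral of $\sup_{|h|\leq t}\|\Delta^m_h(\partial^N\,\cdot\,)\|_p$. Starting from the standard DOI Lipschitz bound $\|e^{\mathrm{i}\xi u}-1\|_p\lesssim |\xi|\|u\|_p$, I iterate the Duhamel/DOI identity $\partial_j e^{\mathrm{i}\xi u}=\mathrm{i}\xi\, T_{\psi_\xi}(u,u)[\partial_j u]$ and its higher analogues; each differentiation produces one factor of $\xi$ together with a DOI whose symbol lies in the appropriate integral projective class, so that the quantum chain rule (cf.\ Section~\ref{s3}) supplies uniform $L_p\to L_p$ bounds. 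A Leibniz-type expansion then distributes the $N$ derivatives among products $(\partial^{\alpha_1}u)\cdots(\partial^{\alpha_k}u)$, and the algebra property of $B^s_{p,q}(\rd)\cap L_\fz(\rd)$ bounds each term by $(1+|\xi|)^N$ times a polynomial in $\|u\|_\fz$, linear in $\|u\|_{B^s_{p,q}}$. Applying the same machinery to the finite differences $\Delta^m_h$ controls the supremum-in-$h$ part of the difference norm.

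The main obstacle is precisely this polynomial-in-$\xi$ unitary bound. In the commutative case it reduces to the ordinary chain rule together with Fa\`a di Bruno, but in the noncommutative setting every differentiation step spawns a double operator integral, and securing $L_p\to L_p$ estimates for these DOIs that are uniform in $\xi$ and valid for all $p,q\in[1,\infty]$---in particular at the endpoints $p=1,\infty$ where the naive Lipschitz estimate fails---is what forces the hypothesis $F\in_{\loc}W_0\cap W_{\lceil s\rceil}$: the Wiener norm supplies exactly the $L_1$ weight against which the polynomial growth $(1+|\xi|)^{\lceil s\rceil}$ of the unitary Besov norm is integrable.
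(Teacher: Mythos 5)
Your opening moves coincide exactly with the paper's: localize $F$ to $G=F\phi_{\cdot}$, write $F(u)=\frac{1}{2\pi}\int \hat G(\xi)(e^{\mathrm i\xi u}-1)\,d\xi$, and reduce everything to a polynomial‑in‑$\xi$ estimate $\|e^{\mathrm i\xi u}-1\|_{B^s_{p,q}}\lesssim(1+|\xi|)^{\lceil s\rceil}\|u\|_{B^s_{p,q}}$. But your proposed proof of that unitary estimate diverges from the paper and, as described, has a genuine gap.

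The obstruction is not primarily the $p=1,\infty$ endpoint (which $W_0\cap W_{\lceil s\rceil}\hookrightarrow\tilde B_{\infty,1}^1\cap\tilde B_{\infty,1}^{\lceil s\rceil}$ handles via Proposition~\ref{Lp boundedness}); it is the range of $s$. Theorem~\ref{main re 4} is stated for \emph{all} $s>0$, with $u\in L_\infty$ assumed rather than deduced from an embedding. The quantum chain rule (Theorem~\ref{nc chain rule}) is proved under $s>\tfrac{d}{p}$, and its proof hinges on Lemma~\ref{index pick}, which distributes derivative orders over an $L_{p_k}$‑tuple using Besov$\to$Sobolev embeddings that require $s$ to dominate $\tfrac{d}{p}$. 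Once you apply the perturbation formula~\eqref{Perturbation formula} to $\Delta^m_h(\partial^\alpha e^{\mathrm i\xi u})$, you produce multilinear DOI terms such as $T_{\psi_\xi^{[2]}}^{\cdots}(\Delta_h^1 u,\,\partial^{\beta}u)$ in which one argument should carry the difference and the other a derivative of $u$; to close the estimate via the difference characterization you must put the derivative factor in $L_\infty$, but $\partial^\beta u\in L_\infty$ is false in general when $s-|\beta|\le\tfrac{d}{p}$. The algebra property of $B^s_{p,q}\cap L_\infty$ does not rescue this, because the factors after your Leibniz expansion are derivatives of $u$, not $u$ itself. So for $0<s\le\tfrac{d}{p}$ (and indeed for any $s$ once a genuine second derivative must be placed on $u$) the chain‑rule route as you outline it does not close.

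The paper sidesteps this entirely: instead of differentiating $e^{\mathrm i\xi u}$, it applies Duhamel's formula to obtain the noncommutative Meyer decomposition
\[
e^{\mathrm i\xi u}-1=G(S_0u)S_0u+\int_0^1\sum_{j\ge1}e^{\mathrm i\theta\xi S_{j-1}u}(\triangle_j u)\,e^{\mathrm i(1-\theta)\xi S_{j-1}u}\,d\theta,
\]
placing the rough datum $\triangle_j u$ in the middle of a paraproduct whose outer factors $a_j=e^{\mathrm i\theta\xi S_{j-1}u}$, $b_j=e^{\mathrm i(1-\theta)\xi S_{j-1}u}$ are built from low‑pass truncations of $u$ and therefore, by Bernstein and McDonald's Lemma~6.7, satisfy $\|\partial^\alpha a_j\|_\infty\lesssim(1+|\xi|)^{|\alpha|}2^{j|\alpha|}$ uniformly. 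Lemma~\ref{Mc} then gives the paracommutator bound $\|T_{a,b}\|_{B^s_{p,q}\to B^s_{p,q}}\lesssim M_{\lceil s\rceil}(a)M_{\lceil s\rceil}(b)$ for all $s>0$ and all $p,q\in[1,\infty]$, yielding the unitary estimate without ever needing $s>\tfrac dp$ or the chain rule. If you wish to keep your route, you would need a version of the chain rule and of the $L_{p_k}$‑splitting valid under only $u\in B^s_{p,q}\cap L_\infty$, which the paper does not supply and which appears to fail for small $s$; the paraproduct decomposition is the device that makes the theorem reach all $s>0$.
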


\begin{remark}
    Theorem \ref{main re 4} requires more regularity assumption on $F$ than Theorem \ref{main re 1}. For example, when $\lceil s\rceil=2k+1$ for some $k\in \bN^+$ and $F=|x|x^{2k+1},$ we then have $F\in C^{2k+1}(\bR),$ whereas $F\notin_{\mathrm{loc}} W_0(\bR)\cap W_{2k+1}(\bR)$ due to the singularity of the Hilbert transform on $L_1(\bR).$ Unfortunately, we do not know how to remove this gap.
\end{remark}

In this article, for the proof of main results, we will deeply explore the multiple operator integral theory introduced in \cite{ACDS2009, CMS2021, Peller2006, PSS2013}, which can be viewed as a multivariate extension of double operator integral. Specifically, we will first present two equivalent difference characterizations of quantum Besov spaces in Theorem \ref{equiv ch} and Theorem \ref{Laf}, which is well-known in the classical case $\theta=0$. This resolves the problem posed by McDonald in \cite[Remark 3.16]{McNLE}. Subsequently, we will establish a quantum chain rule formula in Theorem \ref{nc chain rule} that is essential to the proof of nonlinear estimates, and its proof turns out quite technical. More precisely, we need to pick up $p_k$'s carefully from the underlying assumption so that the Sobolev/Besov embeddings and the boundedness of multiple operator integrals can be applied simultaneously. The  quantum chain rule formula and its proof constitute the main novel part of the present paper. By leveraging the difference characterizations of quantum Besov spaces and the quantum chain rule and its proof, one can demonstrate the main results in the case $s\notin \bN.$ However, the case $s\in \bN$ cannot be concluded in a similar way since the difference characterizations are not helpful any more. We will exploit the nonlinear interpolation (see e.g. \cite[Theorem 2]{Tartar}, \cite[Theorem 1]{M1989}). In order to apply the nonlinear interpolation which requires sharper endpoint estimates than the linear case, we improve significantly the techniques appearing in the proof of both quantum chain rule and the case $s\notin \bN$, see the arguments in Proposition \ref{main re 2} and Lemma \ref{gap} for details. 

\begin{remark}
 In the classical case, Runst's results \cite{Runst1985,Runst1986} also hold for $0<p,q\le \infty$ with $s>\frac{d}{\min(p,q,1)}-d.$  It is therefore natural to ask whether Theorem \ref{main re 1} can be extended to the case $0<p,q\le\infty.$  Note that in the noncommutative setting, Ricard \cite{Ricard} established the H{\"o}lder estimate for $|\cdot|^\theta$ with $0<\theta<1$ and $0<p<\infty$ on the Schatten classes. McDonald and Sukochev \cite{MS} later improved his results to a wider function class by developing the double operator integral technique for the case $0<p<1,$ which was generalized by Casper and Huisman \cite{CH} to the multiple operator integral. In light of these advances, we believe that our main results can also be extended to the case $0<p,q\le\infty,$ which we plan to investigate in further works.  
\end{remark}

Finally, we present a significant application of our results to the well-posedness of noncommutative partial differential equations (PDEs), specifically focusing on the Allen-Cahn equations.
The Allen-Cahn equation is defined by the following Cauchy problem:
\begin{align}\label{Allen Cahn}
    \partial_t u=\Delta u+F(u), \ \ \ u(0)=u_0
\end{align}
and its integral form is
\begin{align}\label{integral AC}
    u(t)=e^{t\Delta}u_0+\int_0^t e^{(t-\tau)\Delta}F(u(s))\,d \tau,
\end{align}
where $F$ is a real-valued Borel function. We say that $u$ is a {\it strong solution} if $u$ satisfies \eqref{Allen Cahn} and $\partial_t u$ exists in the Fr{\'e}chet sense and a {\it mild solution} of \eqref{Allen Cahn} if $u$ satisfies \eqref{integral AC}. 
The well-posedness theory of Allen-Cahn equations is crucial within the classical theory of PDEs, since this class of equations includes many well-known nonlinear parabolic equations. In particular, for their well-posedness on Besov spaces, Miao and Zhang \cite{MZ2004} considered the well-posedness of more general semilinear parabolic equation $$\partial_t u=\Delta u+F, \ \ \ u(0)=u_0$$ in the Besov space $B_{p,2}^s(\bR^d)$ with $s\ge \frac dp,1<p<\infty,$ where $F=F(u)$ (Allen-Cahn equation) or $m(D)F(u)$ with some homogeneous
pseudodifferential operator $m(D),$ such as $\mathbb P\di(u\otimes u)$ (Navier-Stokes equation) and $(\vec{a}\cdot \di)(|u|^\alpha u)$ (convection-diffusion equation). In this article, we aim to investigate the well-posedness of the noncommutative Allen-Cahn equation in $B_{p,2}^s(\rd)$ in the case $s>\frac dp,1<p<\infty$. 

We adopt the following notations.
For a Banach space $X$ and a time interval $I,$ let $C(I;X)$ denotes all continuous functions on $I$ with value in $X$, equipped with the norm $$\|f\|_{C(I;X)}:=\sup_{t\in I}\|f(t)\|_{X}.$$ For $u\in C(I;X),$ let $u(t)$ denote the value of $u$ at time $t$ and $\partial_t u$ denote the Fr{\'e}chet derivative of $u$ with respect to $t,$ that is $\partial_t u$ exists in the sense that $$\lim_{h\to 0}\lf\|\frac{u(t+h)-u(t)}{h}-\partial_t u(t)\r\|_{X}=0.$$ Define $C^1(I;X)$ as the space containing all $u\in C(I;X)$ such that $\partial_t u\in C(I;X).$ Let $F(u)$ denote the operator-valued function $t\mapsto F(u(t)).$ Then exploiting Theorem \ref{s smaller than 1} and Theorem \ref{main re 1}, we proved the following well-posedness result.
\begin{theorem}\label{posedness}
        For $n\in \bN^+, 1<p<\infty,\frac dp<s\le n$ and $u_0\in B_{p,2}^s(\rd)$ self-adjoint, if $F \in C^{n}(\bR)$ and $F(0)=0,$ then  %there exists a maximal time $T_{u_0}>0$ and a unique solution of \eqref{Allen Cahn})
        %   u = (t\mapsto u(t)) \in C([0,T_{u_0}),B^r_{\infty,\infty}(\rd))\cap \bigcap_{0<\alpha\le n} C^1((0,T_{u_0}),B^\alpha_{\infty,\infty}(\rd)).
        there exists a unique self-adjoint mild solution $u$ of \eqref{Allen Cahn} and a maximal existence time $T_{u_0}$ such that
        $$
           u \in C([0,T_{u_0});B_{p,2}^s(\rd))\cap \bigcap_{0<\alpha<n+1} C((0,T_{u_0});B^\alpha_{p,2}(\rd)).
        $$
        If $T_{u_0}<\infty,$ then $$\limsup_{t\nearrow T_{u_0}}\|u(t)\|_{B_{p,2}^s}=\fz.$$ Moreover, we also have the following conclusions:
        \begin{enumerate}
            \item[\rm{(i)}]if $n\ge 2,$ then $u$ is indeed a strong solution such that $$u\in C^1((0,T_{u_0});L_p(\rd)),$$
            \item[\rm{(ii)}] if $F$ is also Lipschitz, then $T_{u_0}=\infty.$
        \end{enumerate}
        
        %If $u_0\in \cL_p(\rd),$ then the condition $\sup_{k=1,\dots,\left \lceil s \right \rceil}\|F^{(k)}\|_\fz<\infty$ can be removed.
    \end{theorem}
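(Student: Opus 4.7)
The plan is to run a Banach fixed-point argument on the Duhamel formulation \eqref{integral AC} in the closed ball $E_{T,R}$ of self-adjoint $u\in C([0,T];B_{p,2}^{s}(\rd))$ with $\|u\|_{C([0,T];B_{p,2}^{s})}\le R$, and then to bootstrap the resulting mild solution via the smoothing effect of the heat semigroup. The linear part $t\mapsto e^{t\Delta}u_{0}$ lies in $C([0,\infty);B_{p,2}^{s}(\rd))$ because the symbol of $e^{t\Delta}$ is a uniform Fourier multiplier on $B_{p,2}^{s}(\rd)$. Since $s>d/p$ yields the embedding $B_{p,2}^{s}(\rd)\hookrightarrow L_{\infty}(\rd)$, Theorem~\ref{main re 1} applied pointwise in $t$ gives $\|F(u(t))\|_{B_{p,2}^{s}}\lesssim h(\|u(t)\|_{B_{p,2}^{s}})\|u(t)\|_{B_{p,2}^{s}}$ with $h$ non-decreasing and continuous, while $F$ being real-valued preserves self-adjointness via the Borel functional calculus.

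For the contraction, I would invoke a local Lipschitz companion to Theorem~\ref{main re 1}, namely $\|F(u)-F(v)\|_{B_{p,2}^{s}}\lesssim K_{M}\|u-v\|_{B_{p,2}^{s}}$ on balls of radius $M$, obtained by the same multiple-operator-integral machinery used to prove Theorem~\ref{main re 1}, combined with the quantum chain rule (Theorem~\ref{nc chain rule}) applied to the standard representation $F(u)-F(v)=T^{F^{(1)}}_{u,v}(u-v)$ of the difference. Choosing $R:=2\|u_{0}\|_{B_{p,2}^{s}}$ and then $T=T(R)$ small enough that $T\,h(R)$ and $T\,K_{R}$ are both $\ll 1$, the map $\Phi(u)(t):=e^{t\Delta}u_{0}+\int_{0}^{t}e^{(t-\tau)\Delta}F(u(\tau))\wrt\tau$ sends $E_{T,R}$ to itself and is a $\tfrac12$-contraction there, producing a unique self-adjoint mild solution on $[0,T]$. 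Gluing local solutions and maximizing gives $T_{u_{0}}$; the blow-up alternative follows by the usual contradiction argument, since if $\limsup_{t\nearrow T_{u_{0}}}\|u(t)\|_{B_{p,2}^{s}}$ were finite, restarting the local argument from $t_{0}<T_{u_{0}}$ would extend the solution past $T_{u_{0}}$.

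For the instantaneous smoothing into $B_{p,2}^{\alpha}$ for all $0<\alpha<n+1$, I would use the semigroup estimate $\|e^{t\Delta}f\|_{B_{p,2}^{\alpha}}\lesssim t^{-(\alpha-\sigma)/2}\|f\|_{B_{p,2}^{\sigma}}$ whenever $\alpha>\sigma$, feed the nonlinear bound $\|F(u(\tau))\|_{B_{p,2}^{\sigma}}\lesssim h(\|u(\tau)\|_{B_{p,2}^{\sigma}})\|u(\tau)\|_{B_{p,2}^{\sigma}}$ into Duhamel, and observe that $\int_{0}^{t}(t-\tau)^{-(\alpha-\sigma)/2}\wrt\tau<\infty$ as long as $\alpha-\sigma<2$. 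Iterating this with a finite number of half-gain steps raises regularity as long as Theorem~\ref{main re 1} still applies, i.e.\ until $\lceil\alpha\rceil\le n$, which covers all $\alpha<n+1$; the integer borderline $\alpha=n$ is handled separately by the nonlinear interpolation of~\cite{M1989} already used in the proof of Theorem~\ref{main re 1}.

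Assertion (i) then follows because $n\ge 2$ ensures the bootstrap reaches $u(t)\in B_{p,2}^{s+2}(\rd)$ for $t>0$, so $\Delta u(t), F(u(t))\in B_{p,2}^{s}\hookrightarrow L_{p}(\rd)$; differentiating \eqref{integral AC} in $L_{p}(\rd)$ shows $\partial_{t}u\in C((0,T_{u_{0}});L_{p}(\rd))$ and that $u$ solves \eqref{Allen Cahn} strongly. For (ii), if $F$ is globally Lipschitz, a direct Littlewood-Paley argument (or Theorem~\ref{s smaller than 1}) upgrades the boundedness estimate to the linear bound $\|F(u(t))\|_{B_{p,2}^{s}}\lesssim \|u(t)\|_{B_{p,2}^{s}}$, and Gronwall's inequality applied to the Duhamel identity prevents finite-time blow-up of $\|u(t)\|_{B_{p,2}^{s}}$, forcing $T_{u_{0}}=\infty$ by the blow-up criterion. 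The main technical obstacle is the local Lipschitz estimate: Theorem~\ref{main re 1} is stated only for boundedness, and passing to $F(u)-F(v)$ at the same regularity level under the non-smooth hypothesis $F\in C^{\lceil s\rceil}$ requires a symmetric multiple-operator-integral decomposition whose operator-valued integrands depend jointly on $u$ and $v$, and must be controlled uniformly on the relevant ball.
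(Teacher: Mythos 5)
Your plan has the right overall shape (Duhamel fixed point, gluing, heat-semigroup bootstrap, Gronwall for global existence under global Lipschitz), and your steps for the bootstrap, the strong solution, and (ii) essentially match the paper. But the contraction step has a genuine gap, which you yourself flag at the end and then leave unresolved.

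You propose to run Banach's fixed point theorem on a ball of $C([0,T];B_{p,2}^{s})$, which requires a local Lipschitz estimate for $T_F$ at regularity level $s$. Under the hypothesis $F\in C^{n}(\bR)$ with $n=\lceil s\rceil$ (the case $s=n$), the only Lipschitz estimate the paper proves at level $B_{p,2}^{s}$ is Proposition \ref{main re 2}, and it requires $F\in C^{\lceil s\rceil+1}(\bR)$ — one extra derivative. The reason is structural: the chain rule plus the perturbation formula \eqref{Perturbation formula} applied to $F(u)-F(v)$ produces terms $T_{F^{[\ell+2]}}^{\dots}$ with $\ell+2$ up to $\lceil s\rceil+1$, so the symbol needs one more order of differentiability than the boundedness estimate does. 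Your "local Lipschitz companion" therefore does not exist under the stated hypotheses, and the map $\Phi$ cannot be shown to be a contraction in the $B_{p,2}^{s}$-metric by this route.

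The paper's fix, which you did not find, is to decouple the two roles: run the contraction in the much weaker metric $d(u,v)=\sup_{t}\|u(t)-v(t)\|_{p}$ on the set $\cD_{\delta}$ of curves bounded in $B_{p,2}^{s}$. Invariance of $\cD_{\delta}$ only needs the $B_{p,2}^{s}$-boundedness estimate (Theorem \ref{main re 1}), and the contraction only needs the $L_{p}$-Lipschitz estimate \eqref{Lipes1}, which holds for any locally Lipschitz $F$ (in particular for $F\in C^{1}$, hence for $F\in C^{n}$). The set $\cD_{\delta}$ is complete in the metric $d$, so the fixed-point argument goes through. Without this downgrading of the metric, your argument does not close, precisely because of the derivative mismatch you noticed.Your plan has the right overall shape (Duhamel fixed point, gluing, heat-semigroup bootstrap, Gronwall for (ii)), and your treatment of the bootstrap, of the strong-solution claim (i), and of global existence (ii) essentially matches the paper. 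But the contraction step has a genuine gap, which you yourself flag at the end and then leave unresolved.

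You propose to run Banach's fixed-point theorem on a ball of $C([0,T];B_{p,2}^{s})$, which requires a local Lipschitz estimate for $T_F$ at regularity level $s$. Under the hypothesis $F\in C^{n}(\bR)$ with $n=\lceil s\rceil$ (the boundary case $s=n$), the only Lipschitz estimate the paper proves at level $B_{p,2}^{s}$ is Proposition \ref{main re 2}, and it requires $F\in C^{\lceil s\rceil+1}(\bR)$ --- one extra derivative. This is structural: applying the perturbation formula \eqref{Perturbation formula} together with the chain rule \eqref{chain rule formula} to $F(u)-F(v)$ produces terms $T_{F^{[\ell+2]}}^{\dots}$ with $\ell+2$ up to $\lceil s\rceil+1$, so the Lipschitz estimate at regularity $s$ needs one more order of differentiability of $F$ than the boundedness estimate. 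The ``local Lipschitz companion'' you invoke therefore does not exist under the stated hypotheses, and $\Phi$ cannot be shown to be a contraction in the $B_{p,2}^{s}$-metric by this route.

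The paper's fix, which you did not supply, is to decouple the two roles of the nonlinear estimate: run the contraction in the much weaker metric $d(u,v)=\sup_{t}\|u(t)-v(t)\|_{p}$ on the set $\cD_{\delta}$ of curves uniformly bounded in $B_{p,2}^{s}$. Invariance of $\cD_{\delta}$ only needs the $B_{p,2}^{s}$-boundedness estimate (Theorem \ref{main re 1}), while the contraction only needs the $L_{p}$-Lipschitz estimate \eqref{Lipes1}, which holds for any locally Lipschitz $F$ and in particular for $F\in C^{n}$, $n\geq 1$. The set $\cD_{\delta}$ is complete in the metric $d$, so Banach's theorem applies. Without this downgrading of the metric, your argument does not close, precisely because of the derivative mismatch you noticed.

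Two further minor notes. In the bootstrap you claim you can iterate ``until $\lceil\alpha\rceil\leq n$, which covers all $\alpha<n+1$''; the paper is slightly more careful at the top end, first iterating in integer steps to $s+K$ with $K$ maximal subject to $s+K<n+1$, and then doing one final fractional gain --- but the idea is the same. And for (ii), the paper runs Gronwall at the level $B_{p,2}^{\{s\}}$ (using Theorem \ref{s smaller than 1}) and then bootstraps, rather than directly at level $s$, again because the global Lipschitz hypothesis only yields a linear bound in the low-regularity range.
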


% The organization of our paper is as follows.

% In Section \ref{s2}, we first recall some definitions and properties of noncommutative $L_p$ spaces, quantum Euclidean spaces and Fourier multipliers. 

% In Section \ref{s3}, we introduce and prove some relevant properties of multiple operators, including boundedness, perturbation formula and continuity. These play important roles in the proof of our main results.

% In Section \ref{s4}, we give the proof of our main estimates, i.e. the boundedness and locally Lipschitz estimates for the superposition operators with a non-smooth symbol on the quantum Besov spaces.

% In Section \ref{s5}, we make use of the nonlinear estimates in Section \ref{s4} to derive the well-posedness of noncommutative Allen-Cahn equations.

% Conventionally, we set $\bN^+:=\{1,\, 2,\,\ldots\}$, $\bN:=\{0\}\cup\bN^+$ and $\bR_+:=(0,\,\infty)$. For any $\alpha:=(\alpha_1,\ldots,\alpha_d)\in\bZ_+^d:=(\bZ_+)^d$, let
% $|\alpha|:=\alpha_1+\cdots+\alpha_d$ and
% $$\partial^\alpha:=
% \lf(\frac{\partial}{\partial_1}\r)^{\alpha_1}\cdots
% \lf(\frac{\partial}{\partial_d}\r)^{\alpha_d}.$$

\bigskip

\noindent {\bf Notations.}
Conventionally, we set $\bN:=\{0,\,1,\, 2,\,\ldots\},\bN^+:=\bN\setminus \{0\}$
and $\bR_+:=(0,\infty)$. Throughout the whole paper, we denote by $C$ a positive constant which is independent of the main parameters, but it may
vary from line to line. We use $A\lesssim B$ to denote the statement that $A\leq CB$ for some constant $C>0$, and $A\thicksim B$ to denote the statement that $A\lesssim B$ and $B\lesssim A$.  %For any $\lambda>0$ and ball $B:=B_d(x_B,\,r_B)\subset \bx$, we use the notation %$\lambda B$ to denote the ball centred at $x_B$ with radius $\lambda r_B$.
For any $1\leq p\leq\infty$, we denote by $p'$ the conjugate of $p$, which satisfies $\frac{1}{p}+\frac{1}{p'}=1$.
 For a measurable set $E \subset\bR^d$, let $\chi_E$ denote its characteristic function.
 For two Banach spaces $X,Y$, let $\mathcal{B}(X,Y)$ denote the space of all bounded linear operators from $X$ to $Y$.
 If $X=Y$, then we simply write $\mathcal{B}(X):=\mathcal{B}(X,X)$. 
%This paper is organized as follows. In Section \ref{s2}, we ...
\bigskip

%If there are no special instructions, any space $\mathcal{X}(\rn)$ is denoted simply by %$\mathcal{X}$. For instance, $L^2(\rn)$ is simply denoted by $L^2$.
 %and $\cs'$
%its \emph{dual space} (namely, the \emph{space of all tempered distributions}).

%%%%%%%%%%%%%%%%%%%%%%%%%%%%%%%%%%%%%%%%%%%%%%%%%%%%%%%%%%%%%%%%%%

%%%%%%%%%%%%%%%%%%%%%% section 2 %%%%%%%%%%%%%%%%%%%%%%%%%%%%%%%%%%

%%%%%%%%%%%%%%%%%%%%%%%%%%%%%%%%%%%%%%%%%%%%%%%%%%%%%%%%%%%%%%%%%%%%%
\section{Preliminaries} \label{s2}
\setcounter{equation}{0}
In this section, we recall some basic definitions and properties of quantum Euclidean spaces and function spaces on them. For a more detailed and comprehensive introduction to the theory of quantum Euclidean spaces, see \cite{Hong,LMSZ}.

\subsection{Quantum Euclidean spaces}\label{ch2.1}
% The quantum Euclidean space $\rd$ is the von Neumann subalgebra of $\mathcal{B}(L_2(\mathbb{R}^d))$ generated by $\{\lambda_\theta(t)\}_{t\in\mathbb{R}^d}$, where $\{\lambda_\theta(t)\}_{t\in\mathbb{R}^d}$ is a family of unitary operators on a Hibert space satisfying the following Weyl relation:
% $$ \lambda_\theta(t)\lambda_\theta(s)=e^{\frac {\mathrm{i}}2(t,\theta s)}\lambda_\theta(t+s),\quad \text{for\: all}\: t,s\in\mathbb{R}^d,$$
% where $\theta$ is a $d\times d $ real antisymmetric matrix, and $\mathrm{i}:=\sqrt{-1}$.
% For simplicity, we can take $\{\lambda_\theta(t)\}_{t\in\mathbb{R}^d}$ as a family of unitary operators on $L_2(\mathbb{R}^d)$.
As in \cite{HLW2023,LSZ,MSX}, the quantum Euclidean spaces $\rd$ can be defined as follows.
\begin{definition}
Let $\theta$ be a $d\times d$ real antisymmetric matrix and $t\in\mathbb{R}^d$. Consider the unitary
operator $\lambda_\theta(t)$ on $L_2(\mathbb{R}^d):$
\begin{align*}
		(\lambda_\theta(t)f)(s):=e^{-\frac{\mathrm{i}}2\lag t,\theta s\rag}f(s-t),\quad f\in{L_2(\mathbb{R}^d)},\, s\in\mathbb{R}^d.
\end{align*}
Here $\ri:=\sqrt{-1}$ and $\lag\cdot,\cdot\rag$ denotes the standard inner product on $\bR^d.$
Subsequently, the quantum Euclidean space $\rd$ is defined as the closed subalgebra of $\cB(L_2(\mathbb{R}^d))$ under the strong operator topology (abbreviated as $\mathrm{SOT}$) generated by
the family $\{\lambda_\theta(t)\}_{t\in\mathbb{R}^d}$.
\end{definition}

\begin{remark}
    Note that when $\theta=0,\lambda_\theta(t)$ becomes the translation operator $$\tau_t:f\mapsto f(\cdot-t)$$ and $\rd$ is $\ast$-isomorphic to $\bR^d.$ Moreover, let $\theta=\hbar \begin{pmatrix}
 0 & I_n\\
 -I_n & 0
\end{pmatrix},$ where $I_n$ denotes the $n\times n$ unit matrix and $\hbar$ denotes the Planck's constant, then $\rd$ is $\ast$-isomorphic to the phase space $\cB(L_2(\bR^n))$ under the map $$\lambda_\theta(t_1,t_2)\mapsto e^{\frac{\ri\hbar}{2}\lag t_1,t_2\rag}\tau_{\hbar t_1}\exp(t_2),\ \ \ t_1,t_2\in\bR^n.$$ Here for $t\in\bR^n$ we use $\exp(t)$ to denote the function $$\exp(t):\bR^n\to \bC,s\mapsto e^{\ri\lag t,s\rag}.$$ These relations indicate the physical significance of quantum Euclidean spaces.
\end{remark}

%Indeed, by \cite[]{GJM2022}, for every unitary family $\{\lambda_\theta(t)\}_{t\in\mathbb{R}^d}$ satisfying the above Weyl relation, the von Neumann algebra generated by $\{\lambda_\theta(t)\}_{t\in\mathbb{R}^d}$ is equivalent. Thus the definition of quantum Euclidean space dosen't depend on the explicit expression of $\lt(t).$

Now we introduce the Weyl quantization on $\rd.$ Given $f \in L_1(\mathbb{R}^d)$, one defines its Weyl quantization $\lt(f): L_2(\mathbb{R}^d)\rightarrow L_2(\mathbb{R}^d)$ as
\begin{equation}\label{defU}
 \lt(f)( g):=\int_{\mathbb{R}^d}f(t)(\lambda_\theta(t)g)\,dt
\end{equation}
for $g\in L_2(\mathbb{R}^d)$.
This $L_2(\mathbb{R}^d)$-valued integral is convergent in the Bochner sense. Then one may show that $\lt(f)\in \rd$ and $\|\lt(f)\|\le \|f\|_1.$ For an explicit proof, see \cite[Lemma 2.3]{MSX}.  

%The Weyl transform is defined as the composition of $\wt$ with the Fourier transform.

%If $f$ has integrable Fourier transform, we define the Weyl transform $W_{\theta}(f)$ as
%$$W_{\theta}(f):=(2\pi)^{-\frac d2}\lt(\cF f).$$

In what follows, we normalize
the Fourier transform of a reasonable function $f$ as
$$\mathcal{F}(f)(\xi):=\hat{f}(\xi) := \int_{\mathbb{R}^d}
f(t)e^{-\mathrm{i}(t,\,\xi)}\,dt, \ \ \  \forall\,\xi\in\mathbb{R}^d,$$
and its inverse Fourier transform as
$$\mathcal{F}^{-1}(f)(t):=\check{f}(t):=(2\pi)^{-d}\int_{\mathbb{R}^d}
f(\xi)e^{\mathrm{i}(\xi,\,t)}\,d\xi, \ \ \  \forall\,t\in\mathbb{R}^d.$$
The Schwartz class on $\rd,$ denoted by $\cS(\rd),$ is defined as the image of the Schwartz class $\mathcal{S}(\mathbb{R}^d)$  on $\mathbb R^d$ under the Weyl quantization $\lt.$   Note that $\lt:\schwartz \to \schwartzt$ is bijective (cf. e.g. \cite[Subsection 2.2.3]{MSX}), hence $\schwartzt$ is a Fr{\'e}chet topological space induced by $\lt.$  The topological dual of $\mathcal{S}(\rd)$ is denoted by $\cS'(\rd).$ Since $\lt$ is bijective, it extends to a topological isomorphism from $\mathcal{S}'(\mathbb{R}^d)$  to $\mathcal{S}'(\rd)$ in the following way: for $T\in\mathcal{S}^\prime(\mathbb{R}^d)$,
$$ (\lt(T),\lt(f))=(T,\tilde{f}), \ \ \ f\in \mathcal{S}(\mathbb{R}^d),$$ 
where $\tilde{f}(\cdot)=f(-\cdot).$

If $x=\lt(f)\in \mathcal{S}(\rd)$ for some $f\in \mathcal{S}(\mathbb{R}^d),$ define $\tau_\theta$ as the linear functional $\tau_{\theta}(x):=f(0)$ on $\cS(\rd),$ then $\tau_{\theta}$ extends to a $n.s.f.$ trace on $\rd$ (cf. \cite[Proposition 1.1]{GJP2021}). As for a general von Neumann algebra, let $L_p(\rd):=L_p(\rd,\tau_{\theta})$ denote the noncommutative $L_p$ spaces associated with $(\rd, \tau_{\theta})$ with norm $\|\cdot\|_p,$ and $L_\infty(\rd):=\rd$ with norm $\|\cdot\|_\infty=\|\cdot\|.$ The Schwartz class $ \mathcal{S}(\rd)$ is dense in $L_p(\rd)$ for $1\leq p<\infty$ in the norm topology, and dense in $L_\infty(\rd)$ in the weak-$*$ topology, for which we refer to \cite{GJP2021, MSX}. It is also straightforward to see that $L_p(\rd)\subset \tempert.$ 

By direct computations, we have the following identities for $f,g\in \schwartz$:
\begin{align*}
    &\tau_{\theta}(\lt(f)\lt(g))=\int_{\bR^d}f(-t)g(t)\,dt.\\
    &\lt(f)^*=\lt(\overline{f(-\cdot)}).\\
\end{align*}
Then the $L_2(\rd)$-inner product $\lag f,g\rag$ can be expressed by $$\tau_{\theta}(\lt(f)^*\lt(g))=\int_{\bR^d}\overline{f(t)}g(t)\,dt,  \ \ \ \text{for\:all\;}f,g\in \mathcal{S}(\mathbb{R}^d).$$ 
Combined with complex interpolation, we deduce the quantum Euclidean analogue of the Hausdorff-Young inequality and Plancherel inequality.
\begin{lemma}\label{Hauss}
Let $f\in \cS(\bR^d),$ then when $1\le p<2,$
$$\|\lt(f)\|_{L_{p'}(\rd)}\leq \|f\|_{L_p(\bR^d)}$$
and when $p=2,$ $$\|\lt(f)\|_{L_{2}(\rd)}=\|f\|_{L_2(\bR^d)}$$
Therefore, $\lt$ extends to a contraction from $L_p(\bR^d)$ to $L_{p'}(\rd)$ when $1\le p<2$ and
a unitary isomorphism from $L_2(\bR^d)$ to $L_2(\rd)$.
\end{lemma}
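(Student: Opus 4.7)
The plan is to follow the classical three-step strategy: establish the endpoint $p=1$, then the endpoint $p=2$, then fill in the middle by complex interpolation, and finally extend by density.

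For the $p=1$ endpoint, I would simply invoke the bound $\|\lambda_\theta(f)\|_\infty\le\|f\|_1$, which was already quoted from \cite[Lemma 2.3]{MSX} via the Bochner integral definition \eqref{defU} together with the fact that each $\lambda_\theta(t)$ is unitary. This gives the contraction $\lambda_\theta:L_1(\bR^d)\to L_\infty(\rd)$.

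For the $p=2$ endpoint, I would use the trace identity already displayed in the excerpt, namely
\[
\tau_\theta\bigl(\lambda_\theta(f)^{*}\lambda_\theta(g)\bigr)=\int_{\bR^d}\overline{f(t)}\,g(t)\,dt,\qquad f,g\in\schwartz.
\]
Setting $g=f$ and taking square roots gives $\|\lambda_\theta(f)\|_{L_2(\rd)}=\|f\|_{L_2(\bR^d)}$, i.e.\ $\lambda_\theta$ is an $L_2$-isometry on the Schwartz class.

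For $1<p<2$, I would apply complex interpolation between the two endpoints. The map $\lambda_\theta$ is defined on the common dense subspace $\schwartz\subset L_1(\bR^d)\cap L_2(\bR^d)$, so the Riesz--Thorin theorem (in the noncommutative $L_p$ setting, which is available since the target spaces $L_\infty(\rd)$ and $L_2(\rd)$ form a compatible interpolation couple with complex method giving $L_{p'}(\rd)$) yields the contraction $\lambda_\theta:L_p(\bR^d)\to L_{p'}(\rd)$ for the intermediate exponents.

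For the extension statement, density of $\schwartz$ in $L_p(\bR^d)$ for $1\le p<\infty$ allows the contraction to be extended uniquely by continuity. For $p=2$, I also need surjectivity onto $L_2(\rd)$: since $\schwartzt=\lambda_\theta(\schwartz)$ is dense in $L_2(\rd)$ (as already noted in the excerpt), the image of the isometric extension is a closed subspace containing a dense set, hence is all of $L_2(\rd)$, and the extension is a unitary isomorphism. The whole argument is essentially routine once the two endpoints are in hand; the only point that requires a little care is verifying that the noncommutative Riesz--Thorin interpolation applies with the correct duality pairing between $L_p(\bR^d)$ and $L_{p'}(\rd)$, but this follows from the standard theory of interpolation of noncommutative $L_p$ spaces associated to the semifinite trace $\tau_\theta$.
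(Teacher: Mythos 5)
Your proposal matches the paper's argument exactly: bound $L_1\to L_\infty$ from \cite[Lemma 2.3]{MSX}, isometry on $L_2$ from the displayed trace identity, Riesz--Thorin (noncommutative) complex interpolation of the couples $(L_1(\bR^d),L_2(\bR^d))$ and $(L_\infty(\rd),L_2(\rd))$ on the dense subspace $\schwartz$, and density to extend; surjectivity for $p=2$ from density of $\schwartzt$ in $L_2(\rd)$. This is precisely what the paper's one-line remark ``Combined with complex interpolation, we deduce\ldots'' intends.
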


\subsection{Differential calculus on quantum Euclidean spaces}
There are two equivalent ways to define the partial derivatives in quantum Euclidean spaces, as discussed respectively in \cite{LMSZ,McNLE} and \cite{LSZ,MSX}: one through the translation semigroup and the other through the commutator. In this article, we will focus solely on the former one.
 Let $T_s$ be the unique $*$-automorphism of $L_\infty(\bR^d_\theta)$ which acts on $\{\lt(t):t\in\bR^d\}$ by 
    \begin{equation*}
        T_s(\lt(t)) = e^{\ri\lag t,s\rag}\lt(t),\quad t,s \in \bR^d.
    \end{equation*}
    Then $T_s$ extends to a unitary operator on $L_\fz(\rd)$ such that $T_s(x)=\tau_s x\tau_{-s}.$ We refer to $T_s$ as the {\it translation operator} on $\rd.$ 

\begin{prop}\label{tran prop}
    For $x\in L_p(\rd),$ if $1\le p<\infty,$ then $\|T_sx-x\|_p\to 0$ when $s\to 0$. If $p=\infty,$ then $T_sx\sotto x$ when $s\to 0$. Here the notion $\sotto$ refers to the convergence in $\mathrm{SOT}$.
\end{prop}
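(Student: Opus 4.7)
The plan is to treat norm convergence for $1 \le p < \infty$ and SOT convergence for $p = \infty$ separately, both grounded in the observation that $T_s$ is a trace-preserving $*$-automorphism of $L_\infty(\rd)$. Trace preservation is visible from the Weyl-symbol identity $T_s\lt(f) = \lt(e^{\ri\langle \cdot, s\rangle}f)$, which leaves the value of the symbol at the origin unchanged; consequently $T_s$ extends to an isometry on every $L_p(\rd)$ for $1 \le p \le \infty$.

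I would first establish $L_2$-continuity. By density of $\cS(\rd)$ in $L_2(\rd)$ together with a standard $3\varepsilon$-argument using the isometry property, it suffices to check $\|T_s x - x\|_2 \to 0$ for $x = \lt(f) \in \cS(\rd)$. Plancherel (Lemma \ref{Hauss}) then gives $\|T_s x - x\|_2 = \|(e^{\ri\langle \cdot, s\rangle} - 1)f\|_2$, which tends to $0$ by dominated convergence with dominating function $2|f| \in L_2(\bR^d)$. For the $L_1$-case, I would use a factorization trick: given $x \in L_1(\rd)$, polar-decompose $x = u|x|$ and set $a := u|x|^{1/2}$, $b := |x|^{1/2}$; both factors lie in $L_2(\rd)$ with $\|a\|_2^2, \|b\|_2^2 \le \|x\|_1$ and $x = ab$. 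Writing
\[
T_s x - x = (T_s a - a)\,T_s b + a\,(T_s b - b)
\]
and applying H\"older's inequality together with the $L_2$-continuity already established yields $\|T_s x - x\|_1 \to 0$. For intermediate $p \in (1, 2) \cup (2, \infty)$, the logarithmic convexity $\|y\|_p \le \|y\|_{p_0}^\theta \|y\|_{p_1}^{1-\theta}$ applied to $y = T_s x - x$ with $x \in \cS(\rd) \subset L_1 \cap L_\infty$, one endpoint providing uniform boundedness via isometry and the other the vanishing from the $L_1$ or $L_2$ step, gives convergence on $\cS(\rd)$; a further $3\varepsilon$-argument propagates this to all of $L_p(\rd)$.

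For the SOT case $p = \infty$, the conjugation formula $T_s x = \tau_s x \tau_{-s}$ in $\cB(L_2(\bR^d))$ combined with the strong continuity of classical translation on $L_2(\bR^d)$ is enough: for each $\xi \in L_2(\bR^d)$,
\[
\|T_s(x)\xi - x\xi\| \le \|x\|_\infty \|\tau_{-s}\xi - \xi\| + \|\tau_s(x\xi) - x\xi\|,
\]
and both summands vanish as $s \to 0$ by strong continuity of $\{\tau_s\}$.

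I expect the main obstacle to be the $L_1$ case. There is no Plancherel identity to invoke there, and density combined with isometry alone is insufficient since the isometry only yields the uniform bound $\|T_s x - x\|_1 \le 2\|x\|_1$ without any decay rate; interpolation between $L_1$ and $L_2$ at the $L_1$ endpoint is degenerate. The factorization through $L_2$ is precisely what bypasses this, converting the $L_1$ analysis into two $L_2$ problems via H\"older, which are then handled by the Plancherel-based argument.
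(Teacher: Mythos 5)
Your proof is correct, but it takes a genuinely different route for the $1\le p<\infty$ case: the paper simply cites \cite[Theorem 3.6]{MSX} for that range and supplies an argument only for $p=\infty$. Your strategy makes the $p<\infty$ case self-contained. You first handle $L_2$ via the Weyl-symbol formula $T_sx-x=\lt\bigl((e^{\ri\langle s,\cdot\rangle}-1)f\bigr)$, Plancherel, and dominated convergence; you then pass to $L_1$ by the polar factorization $x=ab$ with $a=u|x|^{1/2},\,b=|x|^{1/2}\in L_2(\rd)$, the multiplicativity $T_s(ab)=T_s(a)T_s(b)$, the telescoping identity $T_sx-x=(T_sa-a)T_sb+a(T_sb-b)$, and the Cauchy--Schwarz/H\"older inequality; finally you fill in the intermediate exponents by log-convexity of the $L_p$-norms on Schwartz elements and density. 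This is the standard pattern for strong continuity of a one-parameter group of trace-preserving $*$-automorphisms on noncommutative $L_p$, and it has the virtue of making explicit which structural features of $T_s$ (trace preservation, multiplicativity, $L_2$-isometry) drive the conclusion, rather than deferring to a black-box reference. Your $p=\infty$ argument is essentially the same as the paper's (conjugation formula $T_s x=\tau_s x\tau_{-s}$ plus the triangle inequality splitting into two vanishing terms), differing only cosmetically in how the two errors are written. One small point to flag in writing it up: the multiplicativity $T_s(ab)=T_s(a)T_s(b)$ for $a,b\in L_2(\rd)$ (rather than $a,b\in L_\infty(\rd)$) deserves a one-line justification by approximating $a,b$ in $L_2$ by Schwartz elements and using the $L_2\times L_2\to L_1$ continuity of multiplication; this is routine but not entirely automatic.
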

\begin{proof}
    The proof for the case $1\le p<\infty$ can be seen in \cite[Theorem 3.6]{MSX}. In the case $p=\infty,$ since for any $\ \xi \in L_2(\rd),$ $$T_s(x)(\xi)=(\tau_s x)\,(\xi)(\cdot+s), $$ we then have \begin{align*}
       \lim_{s\to 0} \|T_s(x)(\xi)-x(\xi)\|_2 &=\lim_{s\to 0}\,\|(\tau_s x) \,(\xi)(\cdot+s)-(\tau_sx)(\xi) \|_2+\|(\tau_sx)(\xi)-x(\xi) \|_2\\
        &\le \|x\|_{\infty}\lim_{s\to 0}\|\,\xi(\cdot+s)-\xi\|_2+\lim_{s\to 0}\|x (\xi)\,(\cdot-s)-x(\xi) \|_2\\
        &=0
    \end{align*}
    from the fact that $\xi,x(\xi) \in L_2(\bR^d)$.
\end{proof}

Due to Proposition \ref{tran prop}, $\{T_{ze_j}\}_{z\ge 0}$ is a $C_0$ semigroup on $\rd,$ where $e_j$ denotes the $j$th standard basis on $\bR^d$ for $j=1,\dots,d.$ The partial derivative $\partial_j$ on $\rd$ will be defined as the infinitesimal generator of the $C_0$ semigroup $\{T_{ze_j}\}_{z\ge 0}.$ Equivalently, let $\partial_j^z$ denote the divided difference operator on $\rd:$
\begin{equation*}
    \partial_j^z x = \frac{T_{ze_j}x-x}{z},
\end{equation*}
    then $x\in \dom(\partial_j)$ if and only if the limit
    $\partial_j x = \lim_{z\to 0}\partial_j^z x$
exists in $\sot$.
For a multi-index $\alpha \in \bN^d$, we define
    \begin{equation*}
        \partial^\alpha = \partial_1^{\alpha_1}\cdots\partial_d^{\alpha_d}.
    \end{equation*}  
 Furthermore, we define the Laplace operator $$\Delta=\sum_{j=1}^d\partial_j^2$$ as in the classical case.
 We say that an element $x$ is {\it smooth} if for every $\alpha\in \bN^d,x\in\dom(\partial^\alpha).$
Now let $\mathrm{x}_j$ denote the generator of the unitary group $\{\exp(ze_j)\}_{z\in \bR}$ on $L_\infty(\bR^d),$ then for any $x=\lambda_\theta(f)\in \cS(\rd),$ we can readily confirm that $x$ is smooth and
    \begin{align*}
  & \partial_j x = \lt(\ri \mathrm{x}_jf),\;j=1,\ldots,d\\
  & T_s x = \lt(\exp(s)f),\;j=1,\ldots,d,
    \end{align*}
By the above definitions and duality, we can also extend $\partial^\alpha$ to $\tempert.$ 
\begin{definition} For any multi-index $\alpha\in\bN^d$ and $w\in\cs'(\rd)$,
we define the distributional derivative $\partial^{\alpha}$ and translation $T_s$ to every distribution $w\in\cs'(\rd)$ as
$$(\partial^{\alpha}w,x)=(-1)^{|\alpha|}( w,\partial^{\alpha}x),
\ \ \   x\in\cs(\rd)$$
and $$( T_sw,x)= (w,T_{-s}x),\ \ \   x\in\cs(\rd).$$
\end{definition}

\begin{remark}
    Exploiting the identity $T_s(x)=\tau_sx\tau_{-s},$ we immediately have $\partial_j x=[\mathbf D_j,x],$ where $\mathbf D_j$ is the $j$th classical partial derivative. This coincides with the definition of $\partial_j$ in \cite{LSZ,MSX}. 
\end{remark}

Now we introduce the quantum Sobolev space on $\rd,$ which was introduced in \cite{LMSZ,MSX}.
\begin{definition}
For $m\in \bN^+$ and $1\le p\le \fz,$ the quantum Sobolev space $W_p^m(\rd)$ is defined as the space of all $x\in\tempert$ such that $\partial^{\alpha}x\in L_p(\rd)$ whenever $|\alpha|\le m,$ equipped with the norm
 \begin{align*}
     \|x\|_{W^{m}_p (\mathbb{R}^{d}_\theta)}:=\sum\limits_{ |\alpha|\le m} \|\partial^{\alpha}x\|_{L_p(\mathbb{R}^{d}_\theta)}.
 \end{align*}
\end{definition}

%Now we show the weak convergence and $\sot$ convergence of $\{\partial_j^h x\}_{h\ge 0}.$ Suppose $\cM$ is a von Neumann algebra. For a sequence of $\{x_n\}\subset L_p(\cM),1<p<\infty,$ we say $\{x_n\}$ converges to $x$ in the weak topology of $L_p(\cM)$ if $$\lim_{n\to\infty}\tau(x_ny)=\tau(xy), \ \ \ \forall y\in L_{p'}(\cM).$$ We simply write it as $x_n\wto x.$ 

For the element $x\in W_p^1(\rd),$ we have the following approximation property.
\begin{prop}\label{partial prop}
    For $x\in W_p^1(\rd),$ if $1\le p<\infty,$ then $\|\partial_j^zx-\partial_j x\|_p\to 0$ when $z\to 0$. If $p=\infty,$ then $\partial_j^zx\sotto\partial_j x$ when $z\to 0$.
\end{prop}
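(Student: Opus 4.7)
The plan is to reduce both statements to the fundamental theorem of calculus for the translation semigroup $\{T_{ze_j}\}_{z\ge 0}$, combined with Proposition \ref{tran prop}. The key identity I want to establish first is
\begin{equation*}
T_{ze_j}x - x \;=\; \int_0^z T_{se_j}(\partial_j x)\, ds,
\end{equation*}
valid for $x\in W_p^1(\rd)$. For $1\le p<\infty$ this is the standard Bochner-integral form of the fundamental theorem of calculus for a $C_0$-semigroup on $L_p(\rd)$: Proposition \ref{tran prop} says $\{T_{se_j}\}$ is strongly continuous on $L_p(\rd)$, and I only need to verify that the distributional $\partial_j$ used to define $W_p^1(\rd)$ coincides with the semigroup generator on $x$. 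This identification is clear on $\cS(\rd)$ by the explicit formula $\partial_j\lt(f) = \lt(\mathrm{i}\mathrm{x}_j f)$ and then extends by the closedness of the generator together with the density of $\cS(\rd)$ in $L_p(\rd)$ and in $W_p^1(\rd)$.

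Once the integral identity is in hand, dividing by $z$ gives
\begin{equation*}
\partial_j^z x - \partial_j x \;=\; \frac{1}{z}\int_0^z \bigl( T_{se_j}(\partial_j x) - \partial_j x \bigr)\, ds.
\end{equation*}
For $1\le p<\infty$, taking $L_p$-norms and applying the contractivity of Bochner integrals yields
\begin{equation*}
\|\partial_j^z x - \partial_j x\|_p \;\le\; \frac{1}{z}\int_0^z \|T_{se_j}(\partial_j x) - \partial_j x\|_p\, ds,
\end{equation*}
and the right-hand side tends to $0$ as $z\to 0$ by Proposition \ref{tran prop} applied to $\partial_j x\in L_p(\rd)$ (any average of a continuous function tending to $0$ at the origin tends to $0$).

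For $p=\infty$ the semigroup fails to be norm-continuous, so I work vector-wise. For each $\xi\in L_2(\rd)$, the map $s\mapsto T_{se_j}(\partial_j x)\xi$ is continuous in $L_2(\rd)$ by the $p=\infty$ case of Proposition \ref{tran prop}, so the integral identity holds after applying to $\xi$ in the Bochner sense on $L_2(\rd)$:
\begin{equation*}
(T_{ze_j}x - x)\xi \;=\; \int_0^z T_{se_j}(\partial_j x)\xi\, ds.
\end{equation*}
Dividing by $z$, using the contractivity of Bochner integrals in $L_2(\rd)$ and Proposition \ref{tran prop} once more, gives $\|(\partial_j^z x - \partial_j x)\xi\|_2\to 0$, which is SOT-convergence.

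The main obstacle is purely bookkeeping: justifying the integral identity in the $p=\infty$ case, where $T_s$ is only SOT-continuous. The cleanest way is to first prove it for $x\in\cS(\rd)$ (where everything is strong and smooth), then extend by approximation using the weak-$*$ density of $\cS(\rd)$ in $L_\infty(\rd)$ together with the SOT continuity of $T_s$. After that, each of the two norm/SOT estimates is a one-line consequence of the averaging inequality for Bochner integrals.
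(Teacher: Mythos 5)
Your strategy — establish the integral identity
$$T_{ze_j}x-x=\int_0^z T_{se_j}(\partial_j x)\,ds,$$
divide by $z$, apply the averaging inequality, and invoke Proposition \ref{tran prop} — is exactly the approach the paper takes, so the two proofs coincide in structure. The paper, however, gets the integral identity by passing through the Fourier symbol: for $x=\lt(f)$ the scalar fundamental theorem of calculus
$\frac{e^{\ri z\xi_j}-1}{z}=\frac{1}{z}\int_0^z \ri\xi_j e^{\ri s\xi_j}\,ds$
gives the identity at the level of tempered distributions on $\rd$ directly, so it holds for any $x$ with $\partial_j x\in L_p(\rd)$, $1\le p\le\infty$, and no density or generator-identification step is needed. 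Your version instead identifies the distributional derivative with the semigroup generator by density of $\cS(\rd)$ in $W_p^1(\rd)$ and closedness; this is fine for $p<\infty$, though you should note that density of $\cS(\rd)$ in $W_p^1(\rd)$ (not merely in $L_p(\rd)$) is what is actually required and is not listed among the paper's stated density facts. The genuine soft spot is your $p=\infty$ extension step: weak-$*$ approximating $x\in L_\infty(\rd)$ by Schwartz elements does not give any control on $\partial_j x_n$, so you cannot pass to the limit in the integral identity this way. You would need to mollify on the Fourier side so that both $x_n\to x$ and $\partial_j x_n\to\partial_j x$ converge appropriately — or simply adopt the paper's symbol-level derivation, which makes the identity a purely algebraic fact independent of $p$ and sidesteps the approximation entirely.
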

\begin{proof}
For $1\le p<\infty,$ suppose $x=\lt(f)\in W_p^1(\rd)$ for some $f\in \temper,$ then for any $y=\lt(g)\in \schwartzt,$ we have
    \begin{align*}
    ( \partial_j^zx-\partial_jx,y)&=
        \lf( \lt\lf(\lf(\frac{\exp(ze_j)-1}{z}-\ri\mathrm x_j \r)f\r),\lt(g) \r)\\
        &=\lf( f,\lf(\frac{\exp(ze_j)-1}{z}-\ri\mathrm x_j \r)\tilde g\r).
    \end{align*}
    Since $$\lf(\frac{\exp(ze_j)-1}{z}-\ri\mathrm x_j \r)\tilde g \to 0$$ in the topology of $\cS(\bR^d),$ we have  \begin{align*}
        \lim_{z\to 0}( \partial_j^zx-\partial_jx,y) =0,
    \end{align*}
Hence it follows that 
   \begin{align*} 
       \partial_j^z x=\frac{1}{z}\int_0^z T_{se_j}\partial_j x\, ds
   \end{align*}
   in distributional sense, which implies that for $1\le p<\infty,$ we have
   \begin{align*}
       \|\partial_j^z x-\partial_j x\|_p\le \frac{1}{z}\int_0^z \|T_{se_j}\partial_j x-\partial_j x\|_p\, ds.
   \end{align*}
Similarly, for $p=\infty$ and every $\xi\in L_2(\bR^d),$ we have
\begin{align*}
       \lf\|\lf(\partial_j^h x-\partial_j x\r)(\xi)\r\|_2\le \frac{1}{z}\int_0^z \lf\|\lf(T_{se_j}\partial_j x-\partial_j x\r)(\xi)\r\|_2\, ds.
   \end{align*}
By Proposition \ref{tran prop}, we conclude the proof.

\end{proof}

\subsection{Fourier multiplier on quantum Euclidean spaces}

For a reasonable symbol $m,$ the Fourier multiplier $m(D)$ on quantum Euclidean spaces is defined on $\schwartzt$ by $$m(D)\lt(f)=\lt(mf), \ \ \ f\in \cS(\bR^d).$$   A direct computation shows that $m(D)$ can be seen as a convolution type operator with kernel $\check{m}:$ $$m(D)x=\check m\ast x,$$ where as in \cite[Section 3.2]{MSX}, the convolution $\check m \ast x$ is expressed by the operator-valued integral $$\check m \ast x=\int_{\bR^d}\check m(s)T_{-s}x\,ds.$$

Now we introduce the noncommutative Young's convolution inequality. For the proof, see e.g. \cite[Lemma 2.7]{CHWW}.
\begin{lemma}
    Let $1\leq p,q,r \leq \infty$ obey the relation
        \[
            \frac{1}{r}+1 = \frac{1}{p}+\frac{1}{q}.
        \]
        Then for $x\in L_p(\rd)$ and $m$ satisfying $\check{m}\in L_q(\bR^d),$ we have 
        \begin{align}\label{Young convolution}
            \|\check m\ast x\|_r\le \|\check{m}\|_q\|x\|_p.
        \end{align}
\end{lemma}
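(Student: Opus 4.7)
The proof follows the classical Riesz--Thorin scheme for Young's inequality: establish two endpoints and interpolate. The underlying fact is that each translation $T_s$ is a trace-preserving $*$-automorphism of $\rd$ (implemented by the unitaries $\tau_{\pm s}$ on $L_2(\bR^d)$), and hence extends to an isometry on $L_p(\rd)$ for every $1\le p\le\infty$.

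First, I handle the endpoint $q=1$. Since $\check m\ast x=\int_{\bR^d}\check m(s)\,T_{-s}x\,ds$ is a Bochner integral in $L_p(\rd)$ and $\|T_{-s}x\|_p=\|x\|_p$, Minkowski's integral inequality gives immediately
\begin{equation*}
\|\check m\ast x\|_p\;\le\;\int_{\bR^d}|\check m(s)|\,\|T_{-s}x\|_p\,ds\;=\;\|\check m\|_1\|x\|_p,
\end{equation*}
covering the line $q=1$, $r=p$ for every $1\le p\le\infty$.

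Second, I handle the endpoint $r=\infty$ (so $q=p'$). By trace duality it suffices to show that for every $y\in\cS(\rd)\subset L_1(\rd)$,
\begin{equation*}
\big|\tau_\theta\big(y(\check m\ast x)\big)\big|\;=\;\Big|\int_{\bR^d}\check m(s)\,\tau_\theta\big(y\,T_{-s}x\big)\,ds\Big|\;\le\;\|\check m\|_{p'}\|x\|_p\|y\|_1.
\end{equation*}
Hölder in $s$ with exponents $(p',p)$ reduces this to the matrix-coefficient bound $\|\tau_\theta(y\,T_{-s}x)\|_{L_p(\bR^d,ds)}\le\|x\|_p\|y\|_1$. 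This bound is immediate at $p=\infty$ from the trace--Hölder pairing, and at $p=1$ it can be established by writing $x=\lt(f)$, $y=\lt(g)$ and using $T_{-s}\lt(f)=\lt(e^{-\ri\lag\cdot,s\rag}f)$ together with $\tau_\theta(\lt(g)\lt(h))=\int g(-t)h(t)\,dt$ to recognise $s\mapsto\tau_\theta(y\,T_{-s}x)$ as a classical Fourier transform on $\bR^d$, whereupon Lemma~\ref{Hauss} and classical Fourier estimates close the bound. The intermediate values of $p$ then follow by interpolation in $p$.

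Finally, with $x\in L_p(\rd)$ fixed, the linear map $S_x\colon\check m\mapsto\check m\ast x$ is bounded $L_1(\bR^d)\to L_p(\rd)$ and $L_{p'}(\bR^d)\to L_\infty(\rd)$, each with operator norm at most $\|x\|_p$. Complex interpolation (Riesz--Thorin) yields $S_x\colon L_q(\bR^d)\to L_r(\rd)$ with norm $\le\|x\|_p$ for every intermediate $q\in[1,p']$, and a direct exponent computation shows the target $r$ satisfies exactly $\tfrac1r+1=\tfrac1p+\tfrac1q$, which is the claim. The main obstacle lies in the matrix-coefficient estimate of the second endpoint: once the Weyl-symbol reduction is carried out, the problem is purely classical Fourier analysis, but pushing the estimate cleanly through for all $p$ requires the quantum Hausdorff--Young inequality of Lemma~\ref{Hauss} as the bridge between quantum $L_p$-norms on $\rd$ and classical Fourier bounds on $\bR^d$.
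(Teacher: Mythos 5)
The paper defers the proof entirely to \cite[Lemma 2.7]{CHWW}, so there is no in-paper argument to compare against; on its own terms your Riesz--Thorin scheme is the natural one, and the Minkowski endpoint $q=1$ together with the final interpolation and exponent bookkeeping are fine. The genuine gap is in the second endpoint $r=\infty$. After your duality/H\"older reduction everything hinges on the matrix-coefficient bound $\big\|s\mapsto\tau_\theta(y\,T_{-s}x)\big\|_{L_p(\bR^d)}\lesssim\|x\|_p\|y\|_1$, which you obtain by interpolating its $p=\infty$ case (trivial) with its $p=1$ case. But the $p=1$ case, $\int_{\bR^d}|\tau_\theta(y\,T_{-s}x)|\,ds\lesssim\|x\|_1\|y\|_1$, is by exactly the same duality \emph{equivalent} to the endpoint $\|\check m\ast x\|_\infty\lesssim\|\check m\|_\infty\|x\|_1$ you are trying to prove, so appealing to it without proof is circular; and the tools you cite do not close it: Lemma~\ref{Hauss} yields $\|\lt(f)\|_{L_\infty(\rd)}\le\|f\|_{L_1(\bR^d)}$ and, dually, $\|f\|_{L_\infty(\bR^d)}\le\|\lt(f)\|_{L_1(\rd)}$, but no estimate of the form $\|\hat f\|_{L_1(\bR^d)}\lesssim\|\lt(f)\|_{L_1(\rd)}$ is available for $\theta\ne 0$, so ``classical Fourier estimates'' alone cannot finish the job.

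The missing ingredient is a positivity argument, not Hausdorff--Young. Reduce to $x,y\ge 0$ by Jordan decomposition (at the cost of a universal constant); then $\tau_\theta(y\,T_{-s}x)=\tau_\theta\big(y^{1/2}(T_{-s}x)y^{1/2}\big)\ge 0$ for all $s$. Writing $x=\lt(f)$, $y=\lt(g)$ with $f,g$ Schwartz, your identification $\tau_\theta(y\,T_{-s}x)=\hat F(s)$ with $F(t)=g(-t)f(t)$, combined with Fourier inversion and the trace identity $\tau_\theta(\lt(h))=h(0)$, gives
\begin{equation*}
\int_{\bR^d}|\tau_\theta(y\,T_{-s}x)|\,ds=\int_{\bR^d}\hat F(s)\,ds=(2\pi)^d F(0)=(2\pi)^d\,\tau_\theta(y)\,\tau_\theta(x)=(2\pi)^d\,\|x\|_1\|y\|_1,
\end{equation*}
after which the $p=1$ case extends by density and the rest of your interpolation scheme goes through. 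Note also that this step (and your own $p=2$ step, where classical Plancherel contributes a factor $(2\pi)^{d/2}$) produces explicit powers of $2\pi$; with the paper's Fourier convention the clean constant $1$ written in \eqref{Young convolution} will not emerge from this route, and the inequality should be read with an implicit dimensional constant, which is all the paper ever uses.
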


 % According to \eqref{Young convolution}, if $\check{m}\in L_1(\bR^d),$ then $m\in M_p(\rd)$ and $\|m\|_{M_p}\le \|\check{m}\|_1.$ However, a function in $M_p(\rd)$ may not possess an integrable Fourier inverse transform.

Now we introduce the dilation on quantum Euclidean spaces. Recall that for measurable functions $f$ on $\bR^d,$ the dilation $\delta_\varepsilon$ is defined as $\delta_\varepsilon(f)(\xi)=f(\varepsilon\xi)$ for any $\xi\in\bR^d$ and $\varepsilon>0$. For the quantum Euclidean space $\rd$, we define the corresponding dilation via the Weyl quantization. Specifically, given $x=\lt(f)\in \tempert,$ we set $\delta_\varepsilon(x):=\lambda_{\varepsilon^{-2}\theta}(\delta_{\varepsilon}(f)).$
The $L_p$ boundedness of dilation is proved in \cite[Proposition 3.1]{HLW2023} and \cite[Proposition 3.7]{CHWW}. 
\begin{prop}\label{prop:dilation}
Let $1\leq p\leq\infty$ and $\varepsilon>0$. Then for any $x\in L_p(\rd)$, we have
$$\|\delta_\varepsilon(x)\|_{L_p(\bR_{\varepsilon^{-2}\theta}^d)}=\varepsilon^{-\frac dp}\|x\|_{L_p(\rd)}.$$
\end{prop}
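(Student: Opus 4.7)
The plan is to realize the dilation $\delta_\varepsilon$ as (a scalar multiple of) the spatial implementation on the ambient Hilbert space $L_2(\bR^d)$ of a $*$-isomorphism between $\rd$ and $\bR^d_{\varepsilon^{-2}\theta}$, and then to track how the two canonical traces transform under this implementation. The $L_p$ identity then falls out of one trace computation that works uniformly in $p\in[1,\infty)$, with $p=\infty$ handled separately as the operator norm.

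Concretely, I would introduce the unitary $U_\varepsilon$ on $L_2(\bR^d)$ given by $(U_\varepsilon g)(s):=\varepsilon^{-d/2}g(s/\varepsilon)$ (unitarity being an elementary change of variables) and compute directly from the defining formula $(\lambda_\theta(t)h)(s)=e^{-\frac{\ri}{2}\lag t,\theta s\rag}h(s-t)$ that
\[
U_\varepsilon\,\lambda_\theta(t)\,U_\varepsilon^{-1}=\lambda_{\varepsilon^{-2}\theta}(\varepsilon t),\qquad t\in\bR^d,
\]
so that $\mathrm{Ad}(U_\varepsilon)$ implements a spatial $*$-isomorphism $\rd\to\bR^d_{\varepsilon^{-2}\theta}$. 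Integrating this identity against $f$ and substituting $u=\varepsilon t$ in the defining integral \eqref{defU} of the Weyl quantization then relates $\lambda_{\varepsilon^{-2}\theta}(\delta_\varepsilon f)$ to $U_\varepsilon\lambda_\theta(f)U_\varepsilon^{-1}$ up to an explicit power of $\varepsilon$. Next, by combining the defining identities $\tau_\eta(\lambda_\eta(g))=g(0)$ on both algebras with this Weyl-level identification, I would verify that the trace transforms as
\[
\tau_\theta\bigl(U_\varepsilon^{-1}\,y\,U_\varepsilon\bigr)=\varepsilon^{d}\,\tau_{\varepsilon^{-2}\theta}(y),\qquad y\in L_1(\bR^d_{\varepsilon^{-2}\theta}),
\]
first on Schwartz symbols and then by density.

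For $1\le p<\infty$ I would compute
\[
\|\delta_\varepsilon(x)\|_{L_p(\bR^d_{\varepsilon^{-2}\theta})}^p=\tau_{\varepsilon^{-2}\theta}\bigl(|\delta_\varepsilon(x)|^p\bigr),
\]
apply the trace relation, and use that unitary conjugation commutes with the Borel functional calculus, i.e.\ $|U_\varepsilon x U_\varepsilon^{-1}|^p=U_\varepsilon|x|^pU_\varepsilon^{-1}$, to collapse the right-hand side to a scalar multiple of $\tau_\theta(|x|^p)=\|x\|_{L_p(\rd)}^p$. Combining the factor of $\varepsilon^d$ from the trace relation with the scalar power of $\varepsilon$ arising from the Jacobian in the spatial identification, and then extracting a $p$-th root, produces exactly $\varepsilon^{-d/p}\|x\|_{L_p(\rd)}$. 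The case $p=\infty$ is handled in parallel: the operator norm is preserved by $\mathrm{Ad}(U_\varepsilon)$, so combined with the same scalar one recovers $\varepsilon^{-d/\infty}=1$. Finally, density of $\cS(\rd)$ in $L_p(\rd)$ in the appropriate topology (norm for $1\le p<\infty$, weak-$*$ for $p=\infty$), established in the paragraph before Lemma \ref{Hauss}, upgrades the identity from Schwartz symbols to arbitrary $x\in L_p(\rd)$.

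The main technical obstacle is pure bookkeeping of powers of $\varepsilon$: the Jacobian from the substitution $u=\varepsilon t$ in the Weyl integral, the factor of $\varepsilon^{d}$ in the trace transformation, and the $p$-th root taken at the end all produce powers of $\varepsilon$ that must cancel correctly so as to leave exactly $\varepsilon^{-d/p}$. Getting the signs and exponents right is crucial, in particular so that the endpoint scalings ($\varepsilon^{-d}$ at $p=1$ and $1$ at $p=\infty$) both emerge consistently from the single underlying spatial identification, and so that the case $p=2$ is compatible with the direct Plancherel computation already available from Lemma \ref{Hauss}.
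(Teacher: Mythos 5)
Your overall strategy---implement the dilation spatially by conjugation with the unitary $U_\varepsilon$ on $L_2(\bR^d)$, verify the intertwining $U_\varepsilon\lambda_\theta(t)U_\varepsilon^{-1}=\lambda_{\varepsilon^{-2}\theta}(\varepsilon t)$, track the trace transformation, and conclude for general $p$ via Borel functional calculus---is the standard and correct approach for this sort of dilation estimate. However, there is a concrete gap precisely at the ``bookkeeping'' step you flag as delicate. Carrying out the substitution $u=\varepsilon t$ in $U_\varepsilon\lt(f)U_\varepsilon^{-1}=\int f(t)\lambda_{\varepsilon^{-2}\theta}(\varepsilon t)\,dt$ yields $\varepsilon^{-d}\int f(u/\varepsilon)\lambda_{\varepsilon^{-2}\theta}(u)\,du=\varepsilon^{-d}\lambda_{\varepsilon^{-2}\theta}(\delta_{\varepsilon^{-1}}f)$. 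The function under the Weyl quantization is $f(\cdot/\varepsilon)=\delta_{\varepsilon^{-1}}f$, \emph{not} $f(\varepsilon\cdot)=\delta_\varepsilon f$. Thus $\operatorname{Ad}(U_\varepsilon)$ does not relate to $\lambda_{\varepsilon^{-2}\theta}(\delta_\varepsilon f)$ ``up to an explicit power of $\varepsilon$'': those two operators differ not only by a scalar but also by a genuine rescaling of the symbol. Your second step, as written, does not go through.

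This discrepancy is consequential and not a matter of bookkeeping alone. If one computes with the displayed definition $\delta_\varepsilon(x)=\lambda_{\varepsilon^{-2}\theta}(\delta_\varepsilon f)$ directly (easiest to see when $\theta=0$, where $\lt(f)$ is unitarily equivalent under the Fourier transform to multiplication by $\hat f$, and $\widehat{\delta_\varepsilon f}=\varepsilon^{-d}\hat f(\cdot/\varepsilon)$), one gets $\|\delta_\varepsilon(x)\|_{L_p}=\varepsilon^{-d/p'}\|x\|_{L_p}$, which agrees with the stated $\varepsilon^{-d/p}$ only at $p=2$---which is exactly why the Plancherel sanity check you propose fails to catch the issue. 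By contrast $\operatorname{Ad}(U_\varepsilon)(x)=\varepsilon^{-d}\lambda_{\varepsilon^{-2}\theta}(\delta_{\varepsilon^{-1}}f)$ \emph{does} scale as $\varepsilon^{-d/p}$, because $\tau_{\varepsilon^{-2}\theta}\circ\operatorname{Ad}(U_\varepsilon)=\varepsilon^{-d}\tau_\theta$ and unitary conjugation commutes with $|\cdot|^p$. So your argument proves the claimed inequality for $\operatorname{Ad}(U_\varepsilon)$, but you have silently identified this with the paper's $\delta_\varepsilon$, and these do not coincide. To make the proof correct you must either work with $\delta_\varepsilon:=\operatorname{Ad}(U_\varepsilon)$ (i.e.\ $\varepsilon^{-d}\lambda_{\varepsilon^{-2}\theta}(\delta_{\varepsilon^{-1}}f)$) throughout, or explicitly reconcile that operator with $\lambda_{\varepsilon^{-2}\theta}(\delta_\varepsilon f)$; as it stands, the key identity your plan rests on is false.
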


 Let $M_p(\rd)$ denote the space of symbols $m$ such that $m(D)\in \cB(L_p(\rd)),$ equipped with the norm $$\|m\|_{M_p}:=\|m(D)\|_{L_p(\rd)\to L_p(\rd)}.$$ 
The multiplier $m(D)$ with $m\in M_p(\rd)$ is said to be a $L_p$ multiplier, which is an important object in classical harmonic analysis when $\theta=0.$ Exploiting Proposition \ref{prop:dilation}, we can show the following rescaling property.
\begin{prop}\label{rescaling property}
    For $\varepsilon\in \mathbb{R}\setminus \{0\},$ if $\delta_{\varepsilon^{-1}} m\in M_p(\mathbb{R}_{\varepsilon^{2}\theta}^d),$  then $m\in M_p(\mathbb{R}_{\theta}^d)$ and $$\left \|\delta_{\varepsilon^{-1}} m \right \|_{M_p(\mathbb{R}_{\varepsilon^{2}\theta}^d)}=\left \|  m\right \|_{M_p(\mathbb{R}_{\theta}^d)}.$$ 
\end{prop}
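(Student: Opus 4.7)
The plan is to reduce the statement to an intertwining relation between the Fourier multiplier and the dilation, after which Proposition \ref{prop:dilation} converts the $L_p$-boundedness question on $\mathbb{R}_{\varepsilon^2\theta}^d$ to one on $\mathbb{R}_\theta^d$ with the scaling factors canceling exactly.

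First, I would work on the Schwartz class. For $f\in\mathcal{S}(\bR^d)$, unwind the definitions: $m(D)\lambda_\theta(f)=\lambda_\theta(mf)$, while $\delta_{\varepsilon^{-1}}\lambda_\theta(f)=\lambda_{\varepsilon^2\theta}(\delta_{\varepsilon^{-1}}f)$. Since the pointwise relation $\delta_{\varepsilon^{-1}}(mf)=(\delta_{\varepsilon^{-1}}m)(\delta_{\varepsilon^{-1}}f)$ is trivial, a direct computation gives the intertwining
\begin{equation*}
\delta_{\varepsilon^{-1}}\circ m(D)=(\delta_{\varepsilon^{-1}}m)(D)\circ\delta_{\varepsilon^{-1}}\quad\text{on }\mathcal{S}(\rd),
\end{equation*}
where the left-hand $\delta_{\varepsilon^{-1}}$ and the right-hand $(\delta_{\varepsilon^{-1}}m)(D)$ are interpreted on $\bR^d_{\varepsilon^2\theta}$.

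Next I would combine this with Proposition \ref{prop:dilation}, which asserts $\|\delta_{\varepsilon^{-1}}(x)\|_{L_p(\bR^d_{\varepsilon^2\theta})}=\varepsilon^{d/p}\|x\|_{L_p(\rd)}$. For any $x\in\mathcal{S}(\rd)$, under the assumption $\delta_{\varepsilon^{-1}}m\in M_p(\bR^d_{\varepsilon^2\theta})$, I chain
\begin{align*}
\|m(D)x\|_{L_p(\rd)}
&=\varepsilon^{-d/p}\bigl\|\delta_{\varepsilon^{-1}}\bigl(m(D)x\bigr)\bigr\|_{L_p(\bR^d_{\varepsilon^2\theta})}
=\varepsilon^{-d/p}\bigl\|(\delta_{\varepsilon^{-1}}m)(D)\,\delta_{\varepsilon^{-1}}x\bigr\|_{L_p(\bR^d_{\varepsilon^2\theta})}\\
&\leq\varepsilon^{-d/p}\|\delta_{\varepsilon^{-1}}m\|_{M_p(\bR^d_{\varepsilon^2\theta})}\,\|\delta_{\varepsilon^{-1}}x\|_{L_p(\bR^d_{\varepsilon^2\theta})}
=\|\delta_{\varepsilon^{-1}}m\|_{M_p(\bR^d_{\varepsilon^2\theta})}\,\|x\|_{L_p(\rd)},
\end{align*}
so that $m\in M_p(\rd)$ with $\|m\|_{M_p(\rd)}\le\|\delta_{\varepsilon^{-1}}m\|_{M_p(\bR^d_{\varepsilon^2\theta})}$. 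Density of $\mathcal{S}(\rd)$ in $L_p(\rd)$ for $1\le p<\infty$, and weak-$*$ density together with SOT continuity of the multiplier on bounded sets for $p=\infty$, let me extend the estimate to all of $L_p(\rd)$.

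For the reverse inequality, I would simply run the same argument with the roles of $\theta$ and $\varepsilon^2\theta$ swapped, using that $\delta_\varepsilon$ is the $L_p$ inverse of $\delta_{\varepsilon^{-1}}$ (since $\bR^d_{\varepsilon^{-2}\cdot\varepsilon^2\theta}=\rd$) together with the dual rescaling $\|\delta_\varepsilon(y)\|_{L_p(\rd)}=\varepsilon^{-d/p}\|y\|_{L_p(\bR^d_{\varepsilon^2\theta})}$ from Proposition \ref{prop:dilation}. Because the dilation factor $\varepsilon^{d/p}$ on the input is exactly cancelled by $\varepsilon^{-d/p}$ on the output, no trace of $\varepsilon$ survives, yielding the claimed equality of multiplier norms. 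There is no real obstacle here beyond bookkeeping the $\theta$-parameters carefully; the only mildly delicate point is to justify the Schwartz-to-$L_p$ extension when $p=\infty$, but this is standard once one observes that the intertwining relation is an algebraic identity that passes to distributional duals.
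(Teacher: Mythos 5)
Your proposal is correct and follows essentially the same route as the paper: both proofs hinge on the intertwining identity $\delta_{\varepsilon^{-1}}\circ m(D)=(\delta_{\varepsilon^{-1}}m)(D)\circ\delta_{\varepsilon^{-1}}$ (the paper expresses it by directly rewriting $m(D)x=\delta_\varepsilon\bigl(\lambda_{\varepsilon^2\theta}(\tilde m\,\delta_{\varepsilon^{-1}}f)\bigr)$ rather than isolating it as a named relation) and on Proposition \ref{prop:dilation} to cancel the scaling factors. The only cosmetic difference is that the paper applies the computation to general $x=\lt(f)\in L_p(\rd)$ with $f\in\cS'(\bR^d)$ directly, so no separate density step is needed, whereas you prove the estimate first on $\cS(\rd)$ and then invoke density.
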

\begin{proof}
    For simplicity, set $\tilde{m}:=\delta_{\varepsilon^{-1}}m.$ Suppose that $x=\lt(f)\in L_p(\rd)$ for some $f\in\temper,$ then by the definition of $\delta_\varepsilon,$ we have
    $$x=\delta_{\varepsilon}(\lambda_{\varepsilon^2\theta}(\delta_{\varepsilon^{-1}}(f))).$$ Hence by Proposition \ref{prop:dilation}, we have
    \begin{align*}
         \left \| m(D)x \right \|_{L_p(\mathbb{R}_{\theta}^d)}&=\|\lt(mf)\|_{L_p(\mathbb{R}_{\theta}^d)}\\
         &= \left \| \delta_{\varepsilon}(\lambda_{\varepsilon^2\theta}(\tilde{m}\delta_{\varepsilon^{-1}}(f)))\right \|_{L_p(\mathbb{R}_{\theta}^d)}\\
        &=\varepsilon^{-\frac dp}\left \| \tilde{m}(D)(\lambda_{\varepsilon^2\theta}(\delta_{\varepsilon^{-1}}(f))) \right \|_{L_p(\mathbb{R}_{\varepsilon^{2}\theta}^d)}\\
        &\le \varepsilon^{-\frac dp}\left \| \tilde{m} \right \|_{M_p(\mathbb{R}_{\varepsilon^2\theta}^d)}\left \| \lambda_{\varepsilon^2\theta}(\delta_{\varepsilon^{-1}}(f)) \right \|_{L_p(\mathbb{R}_{\varepsilon^{2}\theta}^d)}\\
        &=\varepsilon^{-\frac dp}\left \| \tilde{m} \right \|_{M_p(\mathbb{R}_{\varepsilon^2\theta}^d)}\left \| \delta_{\varepsilon^{-1}}(x )\right \|_{L_p(\mathbb{R}_{\varepsilon^2\theta}^d)}\\
        &=\left \| \tilde{m} \right \|_{M_p(\mathbb{R}_{\varepsilon^2\theta}^d)}\left \| x \right \|_{L_p(\mathbb{R}_{\theta}^d)}.
    \end{align*}
    We then have $m\in M_p(\rd)$ and $\|m\|_{M_p(\rd)}\le \left \|\tilde m\right \|_{M_p(\mathbb{R}_{\varepsilon^{2}\theta}^d)}.$ The converse direction is similar.
\end{proof}

\section{Quantum Besov spaces}\label{s2.5}
In this section, we will introduce Besov spaces on quantum Euclidean spaces defined in \cite{CHWW,Hong,McNLE}, and establish several fundamental properties, including difference characterizations of quantum Besov spaces.
\subsection{Characterizations of quantum Besov spaces}
\subsubsection{Littlewood-Paley characterization}
As in the classical case, the quantum  Besov spaces are defined via the Littlewood-Paley multipliers.
Let $\varphi$ be a smooth radial function on $\bR^d$ with $\supp \varphi\subset B(0,2)\setminus B(0,\frac 12)$ and $$\varphi(\xi)+\varphi(\frac{\xi}{2})=1, \ \ \  \xi \in B(0,2)\setminus B(0,1).$$
For any $k\in\bZ$, define
$$\varphi_k(\xi):=\varphi(2^{-k}\xi),\ \ \ k\in\bZ.$$
Then we have
$$\sum_{k\in\bZ}\varphi_k(\xi)=1,\ \ \ \xi\in\bR^d\setminus\{0\}.$$
Since $\supp \varphi\subset B(0,2)\setminus B(0,\frac12),\supp \varphi_k\subset B(0,2^{k+1})\setminus B(0,2^{k-1}),k\in\bZ$. The functions $\varphi_k,k\in \bZ$ are called the {\it homogeneous Littlewood-Paley functions} on $\bR^d.$

Let $\phi_k=\varphi_k$ for $k\in\bN^+$ and $\phi_0=R+\varphi,$ where $R$ is a smooth radial function on $\bR^d$ such that $R=1-\varphi$ on $B(0,1)$ and vanishes elsewhere, then we have $$\sum_{k\in\bN}\phi_k(\xi)=1,\ \ \ \xi\in\bR^d.$$ The functions $\phi_k,k\in \bN$ are called the {\it non-homogeneous Littlewood-Paley functions} on $\bR^d.$
Define the Littlewood-Paley multipliers by $\triangle_j=\phi_j(D),j\in \bN^+$ and $\triangle_0=\phi_0(D).$ Obviously, the definition of $\phi_k$ and \eqref{Young convolution} yield the fact that 
\begin{align}\label{uniformLP}
\sup_{k\in\bN}\|\triangle_k\|_{L_p(\rd)\to L_p(\rd)}<\infty, \ \ \ 1\le p\le\infty.
\end{align}

\begin{definition}
Let $s\in\bR$ and $1\le p,q\le\fz$. The non-homogeneous Besov space $B^s_{p,q}(\rd)$ is defined
as the space of all $x\in\cs'(\rd)$ such that $\|x\|_{B_{p,q}^s}<\fz,$ where 
$$\|x\|_{B^s_{
p,q}(\rd)}:=\lf(\sum_{j=0}^{\fz}
2^{jsq}\lf\|\triangle_jx\r\|^q_{L_p(\rd)}\r)^{1/q}\ \ \ \mathrm{for}\ \ q<\fz,$$
and
$$
\|x\|_{B^s_{p,\fz}(\rd)}=\sup_{j\in\bN}2^{sj}\lf\|\triangle_jx\r\|_{L_p(\rd)}.$$
\end{definition}

We abbreviate the non-homogeneous Besov class on quantum Euclidean spaces as the quantum Besov spaces. The quantum Besov spaces have the following basic properties. For the proof we refer to \cite{CHWW,McNLE}.
\begin{prop}
  Let $s\in\bR$ and $1\le p,q\le\fz:$
        \begin{enumerate}
            \item[\rm (i)]\label{sobolev_inclusion} If $s,t\in \bR,r\in \bN$ satisfying $s<r<t,$ then $B^t_{p,q}(\rd)\hookrightarrow W^r_p(\rd)\hookrightarrow B_{p,q}^s(\rd).$ 
            % with
            % \begin{align}\label{emc}
            %   (1-2^{-(r-s)q})^{1/q}\|\cdot\|_{B_{p,q}^s}  \lesssim_{p,q} \|\cdot\|_{W_p^r}\lesssim_{p,q} \frac{1}{(1-2^{-(t-r)q})^{1/q}}\|\cdot\|_{B_{p,q}^t}.
            % \end{align}
            Moreover, we have $B_{p,1}^r(\rd)\hookrightarrow W_p^r(\rd)\hookrightarrow B_{p,\infty}^r(\rd).$ 
            
            \item[\rm (ii)]\label{schwartz_dense} If $s\in \bR,$ and $1\le p,q<\infty,$ then $\schwartzt$ is dense in $B^s_{p,q}(\rd)$ in the norm topology. 
%             If $s > 0$, then $\Sc(\rd)$ is dense in $B^s_{p,\infty}(\rd)$.

            \item[\rm (iii)]\label{besov_embedding} If $p_0 \leq p_1, q_0\leq q_1$ and $s_0 \geq s_1+d\left(\frac{1}{p_0}-\frac{1}{p_1}\right),$ then we have
        \[
            B^{s_{0}}_{p_0,q_0}(\rd)\hookrightarrow B^{s_1}_{p_1,q_1}(\rd).
        \]
        Moreover, if $p\le p_1\le\infty,1\le q\le\infty$ and $s>\frac dp-\frac d{p_1},$ then we have 
        \[
            B^{s}_{p,q}(\rd)\hookrightarrow L_{p_1}(\rd).
        \]
        % If $p_0\leq p_1$ and $s_0 > s_1+d\left(\frac{1}{p_0}-\frac{1}{p_1}\right),$ then for all $1\le q_0,q_1\le \infty$ we have the above embedding. 

        \item[\rm (iv)]\label{interpolation}The quantum Besov spaces satisfy the following real interpolation relation
        \begin{equation*}
            (B^{\alpha_0}_{p,q_0}(\bR^d_\theta),B^{\alpha_1}_{p,q_1}(\bR^d_\theta))_{\eta,q} = B^{(1-\eta)\alpha_0+\eta\alpha_1}_{p,q}(\bR^d_\theta).
        \end{equation*}
        Here, $1\leq q_0,q_1,q\leq\infty$, $1\leq p \leq\infty$, $\eta \in (0,1)$ and $\alpha_0\neq \alpha_1 \in \bR$.
        \end{enumerate}  
\end{prop}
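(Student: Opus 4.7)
\medskip

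\noindent\textbf{Proof proposal.} The four assertions are the quantum-side analogues of classical facts about Besov spaces, and the plan is to transplant the classical proofs by exploiting that the Fourier multiplier framework on $\rd$ behaves identically to the one on $\bR^d$ once one has Young's convolution inequality \eqref{Young convolution} and the uniform bound \eqref{uniformLP} on the Littlewood--Paley blocks $\triangle_j$. The one tool I would establish first is a \emph{Bernstein inequality} on $\rd$: if $x\in L_p(\rd)$ has frequency support in $B(0,2^{k+1})$ (i.e.\ $x=\sum_{|j-k|\le 1}\triangle_j x$), then $\|\partial^\alpha x\|_p \lesssim 2^{k|\alpha|}\|x\|_p$. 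This follows by writing $\partial^\alpha\triangle_j = \check m_j\ast\,\cdot\,$ with $m_j(\xi) = (\mathrm i\xi)^\alpha \phi_j(\xi)$, noting $\check m_j \in L_1(\bR^d)$ with $\|\check m_j\|_1 \lesssim 2^{j|\alpha|}$ by a standard rescaling of $\check m_0$, and applying \eqref{Young convolution}.

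For \textbf{(i)}, the forward embedding $B^t_{p,q}\hookrightarrow W^r_p$ is Bernstein plus $\ell^q$--$\ell^1$ H\"older: $\|\partial^\alpha x\|_p \le \sum_j \|\partial^\alpha\triangle_j x\|_p \lesssim \sum_j 2^{jr}\|\triangle_j x\|_p$, and because $r<t$ the weighted series $\sum_j 2^{-j(t-r)}(2^{jt}\|\triangle_j x\|_p)$ is summable. The reverse $W^r_p\hookrightarrow B^s_{p,q}$ (for $s<r$, integer $r$) comes from writing $\triangle_j x = \check m_j^{(r)}\ast\partial^{\,r}x$ where $m_j^{(r)}(\xi) = |\xi|^{-r}\phi_j(\xi)\cdot(\text{symbol of }\partial^r)^{-1}$, giving $\|\triangle_j x\|_p \lesssim 2^{-jr}\|x\|_{W^r_p}$, and then $2^{js}\|\triangle_j x\|_p \lesssim 2^{-j(r-s)}\|x\|_{W^r_p}$, whose $\ell^q$-norm is finite.

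For \textbf{(ii)}, given $x\in B^s_{p,q}(\rd)$ with $p<\infty,q<\infty$, first truncate to $x_N := \sum_{j\le N}\triangle_j x$; the tail $x-x_N$ has Besov norm $\lesssim\bigl(\sum_{j>N}(2^{js}\|\triangle_j x\|_p)^q\bigr)^{1/q}\to 0$. Each $x_N$ lies in $L_p(\rd)$ with frequency support in a ball, and density of $\schwartzt$ in $L_p(\rd)$ together with the fact that Littlewood--Paley projections preserve $\schwartzt$ allows one to approximate $x_N$ by Schwartz elements whose Besov norm is controlled by $\|x-x_N\|_{B^s_{p,q}}$ through finitely many $\triangle_j$'s.

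For \textbf{(iii)}, I would prove the Bernstein-type $L_{p_0}$--$L_{p_1}$ bound $\|\triangle_j x\|_{p_1}\lesssim 2^{jd(1/p_0-1/p_1)}\|\triangle_j x\|_{p_0}$ by the same convolution-with-Schwartz-kernel argument, then the embedding with equality in the parameter relation is immediate, while the strict case $s_0>s_1+d(1/p_0-1/p_1)$ absorbs the passage from $q_0$ to $q_1$ via a geometric gain $2^{-j\eta}$ and H\"older. For \textbf{(iv)}, the cleanest route is the classical retraction/coretraction argument: the map $x\mapsto (\triangle_j x)_{j\ge 0}$ is a coretraction $B^\alpha_{p,q}(\rd)\hookrightarrow \ell^q_\alpha(L_p(\rd))$ with retraction $(x_j)\mapsto \sum_j \tilde\triangle_j x_j$, where $\tilde\triangle_j$ is a thickened multiplier (using $\tilde\phi_j$ supported where $\phi_j\equiv 1$ in a neighbourhood). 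This reduces the statement to real interpolation of weighted $\ell^q$ valued Bochner spaces, which is classical. The only genuine obstacle I anticipate is verifying the retraction identity $\sum_j \tilde\triangle_j \triangle_j = \mathrm{Id}$ on $B^\alpha_{p,q}(\rd)$; this requires convergence of the resolution in the Besov norm, handled via the truncation argument from part (ii) and the uniform bound \eqref{uniformLP}.
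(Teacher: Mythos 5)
The paper itself does not prove this proposition: it states ``For the proof we refer to the papers \cite{CHWW,McNLE}'' and moves on. So there is no in-paper argument to compare against; your proposal is the standard Littlewood--Paley transplant that those references and the classical literature (Triebel, Bergh--L\"ofstr\"om) use, and it is the right route.

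That said, one step as written is genuinely broken. In part (i), for the reverse embedding $W^r_p(\rd)\hookrightarrow B^s_{p,q}(\rd)$ you write $\triangle_j x = \check m_j^{(r)}\ast \partial^{\,r}x$ with $m_j^{(r)}(\xi)=|\xi|^{-r}\phi_j(\xi)\cdot(\text{symbol of }\partial^r)^{-1}$. There is no single ``symbol of $\partial^r$'' -- $r$-th order derivatives form a family indexed by multi-indices -- and inserting $|\xi|^{-r}$ would route you through the fractional Laplacian $(-\Delta)^{r/2}$, whose multiplier $|\xi|^{r}\phi_j(\xi)$-type factors are fine but whose inverse requires $\|(-\Delta)^{r/2}x\|_p\lesssim\|x\|_{W^r_p}$, a Riesz-transform bound that fails at $p=1,\infty$. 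The fix, and the device the paper itself already uses in the proof of Theorem \ref{equiv ch}, is the directional partition $\{\chi_i\}_{i=1}^d$ of \cite[Lemma 1.1]{Wang2011}: on $\supp\phi_j$ with $j\ge 1$ write $\phi_j(\xi)=\sum_{i=1}^d\bigl(\xi_i^{-r}\phi_j(\xi)\chi_i(2^{-j}\xi)\bigr)\,\xi_i^{r}$, so that $\triangle_j x=\sum_i\bigl[\xi_i^{-r}\phi_j\chi_i(2^{-j}\cdot)\bigr](D)\,\partial_i^{r}x$ up to a power of $\mathrm i$, and Proposition \ref{rescaling property} plus \eqref{Young convolution} give the kernel bound $2^{-jr}$ uniformly in $j$, hence $2^{js}\|\triangle_j x\|_p\lesssim 2^{-j(r-s)}\|x\|_{W^r_p}$. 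The low block $\triangle_0 x$ is handled directly by $\|\triangle_0 x\|_p\lesssim\|x\|_p$. Two smaller remarks: in (ii) your approximant $y\in\schwartzt$ of $x_N$ in $L_p$ is not band-limited, so you should post-compose with a fixed cutoff multiplier $\tilde\phi(D)$ equal to $1$ on the spectrum of $x_N$; this leaves only finitely many nonvanishing blocks and then $\|x_N-\tilde\phi(D)y\|_{B^s_{p,q}}\lesssim_{N,s}\|x_N-y\|_p$. In (iv) the worry about the retraction identity is a non-issue: $\sum_j\tilde\triangle_j\triangle_j x=x$ holds in $\cS'(\rd)$ for all $x\in\cS'(\rd)$ by construction of $\tilde\phi_j$, and the retraction $R$ is bounded on the target by the almost-orthogonality $\triangle_k\tilde\triangle_j=0$ for $|j-k|\ge 3$; no convergence of the Littlewood--Paley series in the Besov norm is required.
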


The following result is the so-called reduction theorem of the quantum Besov space, which can be deduced by the properties of $\{\phi_k\}_{k\in\bN}$ and \eqref{Young convolution}.
\begin{prop}\label{Besov regularity}
    Let $s\in\bR$ and $1\le p,q\le\fz$. If $u\in B_{p,q}^s(\rd),$ then for every $\alpha\in \bN^d,$ we have $\partial^{\alpha}u\in B_{p,q}^{s-|\alpha|}(\rd).$
\end{prop}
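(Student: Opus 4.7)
By iteration on the order it suffices to treat $|\alpha|=1$, i.e.\ to show that each $\partial_j$, $j=1,\dots,d$, is bounded from $B_{p,q}^s(\rd)$ into $B_{p,q}^{s-1}(\rd)$. Since $\partial_j$ is a Fourier multiplier with symbol $\ri\xi_j$, it commutes with every Littlewood--Paley block: $\triangle_k(\partial_j u)=\partial_j \triangle_k u$. Consequently it is enough to establish the Bernstein-type estimate
\begin{equation*}
 \|\partial_j \triangle_k u\|_{L_p(\rd)} \lesssim 2^k \|\triangle_k u\|_{L_p(\rd)},\qquad k\in\bN,
\end{equation*}
with a constant independent of $k$, $u$ and $\theta$.

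To prove this, choose auxiliary cut-offs: pick $\tilde\phi_0\in C_c^\infty(\bR^d)$ with $\tilde\phi_0\equiv 1$ on $\supp\phi_0$, and $\tilde\phi\in C_c^\infty(\bR^d)$ supported in an annulus $\{1/4<|\xi|<4\}$ with $\tilde\phi\equiv 1$ on $\supp\varphi$; set $\tilde\phi_k(\xi):=\tilde\phi(2^{-k}\xi)$ for $k\ge 1$. Then $\phi_k\tilde\phi_k=\phi_k$, so $\triangle_k=\tilde\phi_k(D)\triangle_k$ and
\begin{equation*}
 \partial_j\triangle_k u = (\ri\xi_j\tilde\phi_k)(D)\,\triangle_k u.
\end{equation*}
For $k\ge 1$ the identity $\ri\xi_j\tilde\phi_k(\xi)=2^k\cdot\ri(2^{-k}\xi_j)\tilde\phi(2^{-k}\xi)$ together with the standard rescaling of the Fourier transform gives
\begin{equation*}
 \bigl\|(\ri\xi_j\tilde\phi_k)^{\vee}\bigr\|_{L_1(\bR^d)} = 2^k\,\bigl\|(\ri\xi_j\tilde\phi)^{\vee}\bigr\|_{L_1(\bR^d)},
\end{equation*}
while for $k=0$ the quantity $\|(\ri\xi_j\tilde\phi_0)^{\vee}\|_{L_1(\bR^d)}$ is a fixed finite constant, which is $\le C\cdot 2^0$. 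Applying the noncommutative Young convolution inequality \eqref{Young convolution} with $q=1$, $r=p$ to the convolution representation $m(D)x=\check m*x$ then yields the desired Bernstein bound uniformly in $k$.

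Summing over $k$ now finishes the argument: for $q<\fz$,
\begin{equation*}
 \|\partial_j u\|_{B_{p,q}^{s-1}}^q = \sum_{k\in\bN} 2^{k(s-1)q}\,\|\triangle_k\partial_j u\|_p^q \lesssim \sum_{k\in\bN} 2^{ksq}\,\|\triangle_k u\|_p^q = \|u\|_{B_{p,q}^s}^q,
\end{equation*}
with the evident $\sup$-modification for $q=\fz$. The proof is conceptually routine, essentially the classical commutative derivation transferred across the Weyl correspondence; the only point requiring care is that all Fourier-side manipulations be justified inside $\tempert$, which is automatic because $\triangle_k u\in L_p(\rd)\subset\tempert$ from the outset and every multiplier above is given by a Schwartz symbol. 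I expect no conceptual obstacle--the mild technical hurdle is merely checking that the uniformity in $k$ of the Bernstein estimate is genuinely independent of $\theta$, which follows because Young's inequality \eqref{Young convolution} is $\theta$-free.
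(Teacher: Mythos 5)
Your proposal is correct and follows precisely the route the paper indicates (the paper merely remarks that the proposition "can be deduced by the properties of $\{\phi_k\}_{k\in\bN}$ and \eqref{Young convolution}" without writing out details). Your rescaling argument for the Bernstein estimate and the application of the noncommutative Young inequality \eqref{Young convolution} are exactly the intended ingredients, and the iteration to general $\alpha$ is routine.
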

\subsubsection{Difference characterization}
Below we will show that the quantum Besov spaces have equivalent difference characterizations. To prove our results, we first introduce the definitions of difference operators and modulus of smoothness on quantum Euclidean spaces. 
\begin{definition}\label{diff op}
    For $u\in \tempert$ and $h\in \bR^d,$ the $m$th difference operator is defined by
    \begin{align*}
       \Delta_h^m:u\mapsto \sum_{k=0}^m\binom{m}{k}\,(-1)^{m-k}T_{kh}u. 
    \end{align*}
    For $u\in L_p(\rd),$ the $m$th $L_p$-modulus of smoothness is defined by
    $$\omega_p^m(t,u)=\sup_{\left | h\right |\le t}\left \| \Delta_h^mu \right \|_p.$$
\end{definition}
Since the translation operator $T_{kh}$ is unitary, for $1\le p\le\infty$ we have \begin{align}\label{diffbound}
    \lf\|\Delta_h^m\r\|_{L_p(\rd)\to L_p(\rd)}\le 2^m.
    \end{align}
% Moreover, we can establish the following doubling property.
% \begin{lemma}\label{subadd}
%     For any $t>0,1\le p\le \infty$ and $u\in L_p(\rd),$ we have 
%     \begin{align*}
%         \lf\|\Delta_h^m(u)\r\|_p\le 2^m \lf\|\Delta_{\frac h2}^m(u)\r\|_p.
%     \end{align*}
% \end{lemma}
% \begin{proof}
%     Let $\rho_{h}^m(\xi)=(e^{\ri\lag h,\xi\rag}-1)^m,$ then we have $\Delta_h^m=\rho_{h}^m(D).$ Note that
%     \begin{align*}
%         (e^{\ri\lag h,\xi\rag}-1)^m=(e^{\ri\lag \frac h2,\xi\rag}+1)^m(e^{\ri\lag \frac h2,\xi\rag}-1)^m,
%     \end{align*}
%     which implies that 
%     \begin{align*}
%         \Delta_h^m=(T_{\frac h2}+1)^m\Delta_{\frac h2}^m.
%     \end{align*}
%     Then we conclude the proof by the inequality $$\lf\|(T_{\frac h2}+1)^m\r\|_{L_p(\rd)\to L_p(\rd)}\le 2^m.$$
% \end{proof}

Now we state one of the main results in this subsection, which can also be seen in \cite{Hong}.
\begin{theorem}\label{equiv ch}
    If $1\le p,q\le\fz,s>0$ and $u\in B_{p,q}^s(\rd),$ then for $m,N\in \mathbb{N}$ such that $m+N>s$ and $0\le N<s,$ we have \begin{align}\label{new eq norm}
    \left \| u \right \|_{B_{p,q}^s}\sim_{s,m,N,p,q} \left \| u \right \|_p+\sum_{i=1}^d\lf(\int_0^{\infty}\lf(t^{-s+N}\omega_p^m(t,\partial_i^Nu)\r)^q \frac{dt}{t}\r)^{1/q},
\end{align}
with the usual modification when $q=\infty,$ where $\omega_p^m$ and $\Delta_h^m$ are given in Definition \ref{diff op}. In particular, for $s\in \bR_+\setminus \bN$ we have \begin{align}\label{s notin N}
    \left \| u \right \|_{B_{p,q}^s}\sim_{s,p,q}  \left \| u \right \|_p+\sum_{j=1}^d\lf(\int_0^{\infty}\lf(t^{-\left \{ s \right \} }\omega_p^1(t,\partial_i^{[s]}u)\r)^q \frac{dt}{t}\r)^{1/q}.
\end{align}
\end{theorem}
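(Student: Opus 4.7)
The plan is to prove the two-sided equivalence via Fourier multiplier methods, transferring the classical argument to $\rd$ through the noncommutative Young convolution inequality \eqref{Young convolution} and the rescaling property (Proposition \ref{rescaling property}). The key technical lemma I would first establish is a noncommutative Bernstein-type estimate: if $u \in L_p(\rd)$ has Fourier support in $\{2^{k-1} \le |\xi| \le 2^{k+1}\}$, then $\|\partial^\alpha u\|_p \lesssim 2^{k|\alpha|}\|u\|_p$ and $\|\Delta_h^m u\|_p \lesssim \min((2^k|h|)^m, 1)\|u\|_p$ uniformly in $h$. Both reduce via Proposition \ref{rescaling property} to the scale $k = 0$, where the relevant symbols are compactly supported and smooth (and in the second case have inverse Fourier transform with $L_1$ norm controlled by $\min(|h|^m, 1)$), hence act boundedly on $L_p(\rd)$ by Young.

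For the easy direction $\|u\|_p + \sum_i (\int_0^\infty (t^{-s+N}\omega_p^m(t, \partial_i^N u))^q \frac{dt}{t})^{1/q} \lesssim \|u\|_{B_{p,q}^s}$, I would decompose $u = \sum_k \triangle_k u$, apply the Bernstein estimates to get $\|\Delta_h^m \partial_i^N \triangle_k u\|_p \lesssim 2^{kN}\min((2^k|h|)^m, 1)\|\triangle_k u\|_p$, take suprema over $|h| \le t$, split the sum in $k$ at $2^k t \sim 1$, and close with a discrete Hardy inequality. The hypothesis $0 \le N < s$ handles the high-frequency piece (summing $(2^k t)^{-s+N}$ over $2^k t > 1$), while $m + N > s$ handles the low-frequency piece (summing $(2^k t)^{m-s+N}$ over $2^k t \le 1$).

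For the hard direction, I would construct, for each $k \ge 1$ and each $i = 1, \dots, d$, a smooth partition of unity $\{\chi_{k,i}\}_{i=1}^d$ of the annulus $\supp \phi_k$ such that $\chi_{k,i}$ is supported where $|\xi_i| \gtrsim |\xi|$. Choosing $h_k = c\,2^{-k} e_i$ with $c$ small enough that $|e^{\ri h_k \cdot \xi} - 1|$ is bounded below on $\supp(\chi_{k,i}\phi_k)$, I define
\begin{equation*}
\sigma_{k,i}(\xi) := \frac{\chi_{k,i}(\xi) \phi_k(\xi)}{(\ri \xi_i)^N (e^{\ri h_k \cdot \xi}-1)^m},
\end{equation*}
yielding the operator identity $\triangle_k = \sum_{i=1}^d \sigma_{k,i}(D)\,\partial_i^N\,\Delta_{h_k}^m$. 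A direct scaling computation shows that $2^{kN} \sigma_{k,i}(2^k \cdot)$ is independent of $k$ and compactly supported Schwartz, so its inverse Fourier transform lies in $L_1(\bR^d)$; combining Young with Proposition \ref{rescaling property} then yields $\|\sigma_{k,i}(D)\|_{L_p(\rd) \to L_p(\rd)} \lesssim 2^{-kN}$ uniformly in $k$. Consequently $\|\triangle_k u\|_p \lesssim 2^{-kN} \sum_i \omega_p^m(C 2^{-k}, \partial_i^N u)$, and summing $(2^{ks}\|\triangle_k u\|_p)^q$ over $k \ge 1$ and converting the dyadic sum to the continuous integral via the monotonicity of $t \mapsto \omega_p^m(t, \cdot)$ (with the doubling property of Lemma \ref{subadd}) yields the desired bound. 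The $k = 0$ term is controlled by $\|u\|_p$ via \eqref{uniformLP}, and the particular case \eqref{s notin N} for $s \notin \bN$ follows by specializing $N = [s]$ and $m = 1$, which satisfies $m + N = [s] + 1 > s$.

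The main obstacle is establishing the uniform $M_p(\rd)$ bound for the family $\{\sigma_{k,i}\}_k$ in the second direction; in the classical setting one would simply verify a Mihlin-type condition, but here one must leverage the rescaling to reduce to a single fixed smooth symbol and then invoke Young. Once this multiplier bound is in hand, the rest of the argument is a fairly mechanical transcription of Hardy-type estimates and Littlewood–Paley decompositions to the noncommutative setting.
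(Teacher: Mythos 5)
Your proposal takes essentially the same route as the paper: the easy direction uses the same Bernstein-type bound $\|\Delta_h^m\triangle_k u\|_p\lesssim\min(1,(2^k|h|)^m)\|\triangle_k u\|_p$ (the paper's Lemma \ref{WBX lemma}) followed by a discrete Young/Hardy estimate, and the hard direction uses the same partition of unity $\{\chi_j\}$ localizing to $|\xi_i|\gtrsim|\xi|$, inverts the difference and derivative symbols to get $\triangle_k=\sum_i\sigma_{k,i}(D)\partial_i^N\Delta_{h_k}^m$, and establishes the uniform $M_p(\rd)$ bound for $\sigma_{k,i}$ by rescaling to $k=0$ via Proposition \ref{rescaling property} and then invoking the noncommutative Young inequality together with a Bernstein multiplier estimate. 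The only cosmetic differences are your small constant $c$ in $h_k=c\,2^{-k}e_i$ (the paper takes $c=1$, which also keeps $e^{\ri 2^{-k}\xi_i}-1$ away from zero on the relevant set) and the $\ri^N$ normalization in the symbol, neither of which changes the argument.
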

For the classical Besov space, 
this result is well-known (cf. e.g.  \cite{Triebel,Wang2011}). For the Besov space on quantum tori, the corresponding conclusion is proved in \cite{XXY2018}. Our approach, while motivated by these earlier works, differs slightly from them due to the presence of noncommutativity and non-compactness. 

Before proving Theorem \ref{equiv ch}, we first show the following estimate of difference operators:
\begin{lemma}\label{WBX lemma}
    If $1\le p,q\le\fz,s>0$ and $u\in L_p(\rd),$ then for $k\in\bN$ we have 
    \begin{align*}
        \left \| \Delta_h^m \triangle_k u\right \|_p \lesssim_{m,p} \min(1,\left | h\right |^m2^{km})\left \|  \triangle_k u\right \|_p.
    \end{align*}
\end{lemma}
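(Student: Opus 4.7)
The plan is to prove the two bounds $\|\Delta_h^m \triangle_k u\|_p \lesssim \|\triangle_k u\|_p$ and $\|\Delta_h^m \triangle_k u\|_p \lesssim |h|^m 2^{km} \|\triangle_k u\|_p$ separately and then take the minimum. The first bound is immediate from \eqref{diffbound}, since $\|\Delta_h^m\|_{L_p\to L_p} \le 2^m$, so the whole task reduces to establishing the second, refined bound, which captures the cancellation of the difference operator on the low-frequency side and the localization of $\triangle_k u$ on the high-frequency side.

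For the refined bound, I would represent $\Delta_h^m$ via the translation semigroup. Set $\partial_h := \sum_{j=1}^d h_j \partial_j$. By Proposition \ref{tran prop}, $\{T_{th}\}_{t\in\bR}$ is a $C_0$-group on $L_p(\rd)$ (or strongly continuous on the predual when $p=\infty$) whose generator, restricted to smooth elements, acts by $\partial_h$. Since $T_{sh}$ and $\partial_h$ are commuting Fourier multipliers, iterating the identity $T_h - 1 = \int_0^1 T_{th}\,\partial_h\,dt$ yields
\begin{equation*}
\Delta_h^m \,\triangle_k u \;=\; \int_{[0,1]^m} T_{(t_1+\cdots+t_m)h}\,\partial_h^m \,\triangle_k u \, dt_1\cdots dt_m.
\end{equation*}
Because each $T_s$ is an isometry on $L_p(\rd)$, taking norms and expanding $\partial_h^m = \sum_{|\alpha|=m}\binom{m}{\alpha} h^\alpha \partial^\alpha$ gives
\begin{equation*}
\|\Delta_h^m \,\triangle_k u\|_p \;\le\; \|\partial_h^m\,\triangle_k u\|_p \;\lesssim\; |h|^m \max_{|\alpha|=m}\|\partial^\alpha \triangle_k u\|_p.
\end{equation*}

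It then remains to prove the Bernstein-type inequality $\|\partial^\alpha \triangle_k u\|_p \lesssim 2^{k|\alpha|}\|\triangle_k u\|_p$. Fix $\tilde\phi \in C_c^\infty(\bR^d)$ that equals $1$ on the support of $\phi$ and is supported slightly beyond it, and set $\tilde\phi_k(\xi) := \tilde\phi(2^{-k}\xi)$ for $k\ge 1$, handling $k=0$ analogously with $\phi_0$. Since $\phi_k = \phi_k \tilde\phi_k$, we have $\triangle_k u = \tilde\phi_k(D)\triangle_k u$, hence
\begin{equation*}
\partial^\alpha \triangle_k u \;=\; \bigl((\ri\xi)^\alpha \tilde\phi_k\bigr)(D)\,\triangle_k u.
\end{equation*}
A change of variables $\xi = 2^k\eta$ shows $\|\bigl((\ri\xi)^\alpha\tilde\phi_k\bigr)^{\!\vee}\|_{L_1(\bR^d)} = 2^{k|\alpha|}\|\bigl((\ri\eta)^\alpha\tilde\phi\bigr)^{\!\vee}\|_{L_1(\bR^d)}$, and the right-hand factor is a finite constant depending only on $\alpha$ and $\tilde\phi$. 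Applying the noncommutative Young convolution inequality \eqref{Young convolution} with $q=1,r=p$ produces the claimed Bernstein inequality, and combining the two bounds completes the proof.

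The main obstacle is merely bookkeeping: one must verify that $\triangle_k u$ is smooth in the noncommutative sense so that $\partial_h^m \triangle_k u$ is well defined and the semigroup identity is rigorous. This follows because $\triangle_k u = \check\phi_k\ast u$ is the convolution of an $L_p$-element with a Schwartz function, which, via differentiation under the integral, lies in $\dom(\partial^\alpha)$ for every $\alpha$ — alternatively, this is a direct consequence of the multiplier representation $\partial^\alpha \triangle_k u = ((\ri\xi)^\alpha \tilde\phi_k)(D)\triangle_k u$ together with \eqref{Young convolution}, which makes the semigroup step genuinely unnecessary if one is willing to instead directly Taylor-expand $(e^{\ri\langle h,\xi\rangle}-1)^m \tilde\phi_k(\xi)$ and estimate its inverse Fourier transform — but the semigroup route gives the cleanest exposition.
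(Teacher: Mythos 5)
Your proof is correct, but it takes a genuinely different route from the paper's. The paper works directly with the compound symbol $\rho_h^m\phi_k$, where $\rho_h^m(\xi)=(e^{\ri\langle h,\xi\rangle}-1)^m$: it bounds the derivatives $|\partial^\alpha\rho_h^m(\xi)|\lesssim|h|^m|\xi|^{m-|\alpha|}$, invokes a Mikhlin/Bernstein-type multiplier lemma from \cite[Lemma 8.2.4]{ABHN} to get $\|\cF^{-1}(\rho_h^m\phi_k)\|_1\lesssim 2^{km}|h|^m$, and then concludes via the noncommutative Young inequality \eqref{Young convolution}. You instead iterate the semigroup fundamental-theorem-of-calculus identity $T_h-1=\int_0^1T_{th}\,\partial_h\,dt$ to express $\Delta_h^m\triangle_ku$ as an average of translates of $\partial_h^m\triangle_ku$; isometry of $T_s$ then reduces matters to a Bernstein inequality $\|\partial^\alpha\triangle_ku\|_p\lesssim 2^{k|\alpha|}\|\triangle_ku\|_p$, which you establish by scaling the reproducing kernel $\tilde\phi_k$ and applying \eqref{Young convolution}. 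Both arguments are valid and give the same bound. Your route is more self-contained (it avoids the external multiplier lemma) and cleanly separates the cancellation of $\Delta_h^m$ from the frequency localization of $\triangle_k$, at the modest cost of verifying that $\triangle_k u$ lies in $\dom(\partial^\alpha)$ for all $\alpha$, which you correctly handle since $\triangle_k u=\check\phi_k\ast u$ with $\check\phi_k$ Schwartz; the paper's route is more compact in treating the symbol all at once. You are also right that the crude bound $\|\Delta_h^m\triangle_ku\|_p\lesssim\|\triangle_ku\|_p$ follows in one line from \eqref{diffbound}, without the paper's detour through the reproducing identity $\phi_k=\sum_{\ell=-1}^1\phi_{k+\ell}\phi_k$.
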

\begin{proof}
Consider the function $\rho_{h}^m:\xi\mapsto(e^{\ri\lag h,\xi\rag}-1)^m.$
    Assume that $u=\lambda_\theta(f)$ for some $f\in \cS'(\bR^d).$ For $k\in \bN,$ let $\{\phi_k\}_{k\in \bN}$ be the non-homogeneous Littlewood-Paley decomposition and set $\phi_{-1}:=0,\triangle_{-1}:=0.$  By the equality \begin{align*}
\phi_k=\sum_{\ell=-1}^1\phi_{k+l}\phi_k,
    \end{align*} \eqref{uniformLP} and \eqref{diffbound}, we have
    \begin{align*}
       \left \| \Delta_h^m \triangle_k u\right \|_p 
        & \le \sum_{\ell=-1}^1\left \| \lambda_\theta(\rho_{h}^m\phi_{k+l}\phi_kf)\right \|_p\\
        & =\sum_{\ell=-1}^1\left \| \Delta_h^m \triangle_{k+l}\triangle_ku\right \|_p\\
        &\lesssim_m \left \| \triangle_k u\right \|_p,
    \end{align*}

On the other hand, we have \begin{align*}
        \left | \partial^{\alpha}\rho_{h}^m(\xi) \right |&\lesssim_{m,\alpha}\left | h \right |^{\left | \alpha  \right |}\lf(e^{\ri\lag h,\xi\rag}-1\r)^{m-\left | \alpha  \right |}\\
        &\le \left | h \right |^{\left | \alpha  \right |}\left | \lag h,\xi\rag \right |^{m-\left | \alpha  \right |}\\
        &\le \left | h \right |^{m}\left | \xi \right |^{m-\left | \alpha  \right |}.
    \end{align*} 
    Then by \cite[Lemma 8.2.4]{ABHN}, we have $$\left \| \mathcal{F}^{-1} \rho_{h}^m\phi_k \right \|_{1}\lesssim_m 2^{km}\left | h \right |^{m}.$$ 
Hence \eqref{Young convolution} yields \begin{align*}
        \left \| \Delta_h^m \triangle_k u\right \|_p \lesssim_m \left | h\right |^m2^{km}\left \|  \triangle_k u\right \|_p.
    \end{align*} 
\end{proof}

\begin{proof}[Proof of Theorem \ref{equiv ch}]
We may assume that $q<\infty$ since $q=\infty$ is similar. Note that $\omega_p^m(\cdot,\partial_i^Nu)$ is an increasing function on $\bR_+,$ hence it suffices to show 
    $$\left \| u \right \|_{B_{p,q}^s}\sim_{s,m,N,p,q} \left \| u \right \|_p+\sum_{i=1}^d\lf(\sum_{j=-\infty}^{\infty}\lf(2^{j(s-N)}\omega_p^m(2^{-j},\partial_i^Nu)\r)^q \r)^{1/q}.$$

    By Lemma \ref{WBX lemma}, we obtain
     \begin{align*}
        \left \| \Delta_h^m \triangle_k \partial_i^Nu\right \|_p &\lesssim_{m,p} \min(1,\left | h\right |^m2^{km})\left \|  \triangle_k \partial_i^Nu\right \|_p\\
        &\lesssim_N \min(1,\left | h\right |^m2^{km})2^{kN}\left \|  \triangle_k u\right \|_p,
    \end{align*}
    which implies that 
    $$2^{j(s-N)}\omega_p^m(2^{-j},\partial_i^Nu)\lesssim_{m,N,p} 2^{(j-k)(s-N)}\min(1,2^{(k-j)m})2^{sk}\left \|  \triangle_k u\right \|_p.$$ Take the $\ell_q$ norm on both sides and by \eqref{Young convolution} we have
    \begin{align*}
        \sum_{i=1}^d\lf(\sum_{j=-\infty}^{\infty}\lf(2^{j(s-N)}\omega_p^m(2^{-j},\partial_i^Nu)\r)^q \r)^{1/q}\lesssim_{s,m,N,p,q} \left \| u \right \|_{B_{p,q}^s}.
    \end{align*}
    Obviously $\left \| u \right \|_p\lesssim_{s,p,q} \left \| u \right \|_{B_{p,q}^s},$ thus we obtain
    $$\left \| u \right \|_p+\sum_{i=1}^d\lf(\sum_{j=-\infty}^{\infty}\lf(2^{j(s-N)}\omega_p^m(2^{-j},\partial_i^Nu)\r)^q \r)^{1/q}\lesssim_{s,m,N,p,q} \left \| u \right \|_{B_{p,q}^s}.$$

    For the converse direction, consider the function $$\rho_{jk}^m:\xi\mapsto\sum_{\ell=0}^m\binom{m}{\ell}(-1)^{m-\ell}e^{\ri 2^{-k}\xi_j\ell}=\lf(e^{\ri2^{-k}\xi_j}-1\r)^m.$$ Now it suffices to show \begin{align}\label{lemma 1.1}
        \left \|  \triangle_k u\right \|_p \lesssim_{N,m,p} 2^{-kN}\sum_{j=1}^d\left \| \mathcal{F}^{-1}(\rho_{jk}^m)\ast \partial_i^Nu \right \|_p, \ \ \ k \in\bN^+.
    \end{align}
    Indeed, if \eqref{lemma 1.1} holds, by the inequalities $$\left \| \mathcal{F}^{-1}(\rho_{jk}^m)\ast \partial_i^Nu \right \|_p\le \omega_p^m(2^{-k},\partial_i^Nu)$$ and $\|\triangle_0u\|_p\lesssim_p \|u\|_p,$ we conclude the proof.

 To prove \eqref{lemma 1.1}, consider a sequence of smooth functions $\left \{ \chi_j  \right \}_{j=1}^d$ that satisfies 
 \begin{align}
    \label{chi condition}    \sum_{j=1}^d \chi_j(\xi)=1,\forall  \xi \in \left \{\xi:\frac 12\le \left | \xi\right |\le 2 \right \};
\end{align}
and
\begin{align}
    \label{chi condition 2}   \supp \chi_j \subset \left \{\xi:\left | \xi_j\right |\ge \frac{1}{3\sqrt{d}} \right \}.
\end{align}
    The existence of such family is shown in \cite[Lemma 1.1]{Wang2011}. Then by the fact $\supp \phi_k \subset \left \{\xi:2^{k-1}\le \left | \xi\right |\le 2^{k+1} \right \}$ and \eqref{chi condition}, for any $k\in\bN^+$ we have
    \begin{align*}
       \left \|  \triangle_k u\right \|_p \lesssim_{m,p} \sum_{j=1}^d\left \| \mathcal{F}^{-1}\lf(\rho_{jk}^{-m}\phi_k\chi_j(2^{-k}\cdot)\xi_j^{-N}\r)\ast \mathcal{F}^{-1}(\rho_{jk}^m)\ast \partial_i^Nu \right \|_p. 
    \end{align*} 
    We just need to show $$\lf\|\,\rho_{jk}^{-m}\phi_k\chi_j(2^{-k}\cdot)\xi_j^{-N}\r\|_{M_p(\rd)}\lesssim_{m,N,p} 2^{-kN}$$ for $k\in\bN^+.$ Moreover, by Proposition \ref{rescaling property}, it suffices to show $$\rho_{j0}^{-m}\varphi_0\chi_j\xi_j^{-N}\in M_p(\mathbb{R}_{2^{-2k}\theta}^d)$$ and $\lf\|\,\rho_{j0}^{-m}\varphi_0\chi_j\xi_j^{-N}\r\|_{M_p(\mathbb{R}_{2^{-2k}\theta}^d)}$ is independent of $k.$

    Indeed, by Young's convolution inequality and Bernstein multiplier theorem (cf. e.g. \cite[Proposition 1.11]{Wang2011}), for a fixed integer $L>\frac d2,$
    \begin{align*}
       \left \| \,\rho_{j0}^{-m}\xi_j^{-N}\varphi_0\chi_j \right \|_{M_p(\mathbb{R}_{2^{-2k}\theta}^d)}& \le \left \|\mathcal{F}^{-1} \lf(\rho_{j0}^{-m}\varphi_0\chi_j\xi_j^{-N}\r) \right \|_1\\
       &\lesssim \left \| \,\rho_{j0}^{-m}\varphi_0\chi_j\xi_j^{-N} \right \|_2^{1-\frac d{2L}} \sum_{i=1}^d\left \| \partial_i^L\lf(\rho_{j0}^{-m}\varphi_0\chi_j\xi_j^{-N}\r) \right \|_2^{\frac d{2L}}.
    \end{align*}
    Therefore, set $$C:=\left \|\, \rho_{j0}^{-m}\phi_0\chi_j\xi_j^{-N} \right \|_2^{1-\frac d{2L}} \sum_{i=1}^d\left \| \partial_i^L\lf(\rho_{j0}^{-m}\phi_0\chi_j\xi_j^{-N}\r) \right \|_2^{\frac d{2L}},$$ then the regularity of $\phi_0$ and \eqref{chi condition 2} immediately yields that $C$ is finite and depends only on $m,N.$

    When $s\notin \bN,$ take $N=[s]$ and $m=1,$ then we obtain \eqref{s notin N}.
\end{proof}

It is worth noting that in the recent paper of Lafleche \cite{l2024}, he also gave a definition for the difference norm of quantum Besov spaces when $\theta=\hbar\begin{pmatrix}
 0 & I_n\\
 -I_n & 0
\end{pmatrix},$ which can be generalized to any real antisymmetric $d\times d$ matrix:
\begin{align*}
    \|u\|_{\mathrm {BL}}:=\left \| u \right \|_p+\sum_{i=1}^d\lf(\int_{\bR^d}\lf(|\varrho|^{-s+N}\lf\|\Delta_\varrho^m\partial_i^Nu\r\|_p\r)^q \frac{d\varrho}{|\varrho|^d}\r)^{1/q}.
\end{align*}
In the classical case $\theta=0,$ Lafleche's definition is equivalent to our definition of difference norm (cf. e.g. \cite{Triebel}). It was conjectured in McDonald's paper \cite[Remark 3.16]{McNLE} that this equivalence holds for any real antisymmetric $d\times d$ matrix $\theta.$ We confirm this conjecture in the following theorem, which is the other main result in this subsection.
\begin{theorem}\label{Laf}
    If $1\le p,q\le\fz,s>0$ and $u\in B_{p,q}^s(\rd),$ then for $m,N\in \mathbb{N}$ such that $m+N>s$ and $0\le N<s,$ we have
    \begin{align*}
        \left \| u \right \|_{\mathrm{BL}}\sim_{s,m,N,p,q} \left \| u \right \|_p+\sum_{i=1}^d\lf(\int_0^{\infty}\lf(t^{-s+N}\omega_p^m(t,\partial_i^Nu)\r)^q \frac{dt}{t}\r)^{1/q}.
    \end{align*}
\end{theorem}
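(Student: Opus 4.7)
The second equivalence in the statement is exactly Theorem \ref{equiv ch}, so the task reduces to showing $\|u\|_{\mathrm{BL}}\sim \|u\|_{B_{p,q}^s(\rd)}$. The easy direction $\|u\|_{\mathrm{BL}}\lesssim \|u\|_{B_{p,q}^s(\rd)}$ I would obtain by passing to polar coordinates $\varrho = r\sigma$, so that $d\varrho/|\varrho|^d = d\sigma\,dr/r$, and using the trivial pointwise bound $\|\Delta_{r\sigma}^m\partial_i^Nu\|_p\leq \omega_p^m(r,\partial_i^Nu)$. Integrating over $S^{d-1}$ yields $\|u\|_{\mathrm{BL}}\leq \|u\|_p + |S^{d-1}|^{1/q}\sum_i(\int_0^\infty(r^{-s+N}\omega_p^m(r,\partial_i^Nu))^q\,dr/r)^{1/q}$, and Theorem \ref{equiv ch} closes this direction.

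The reverse direction $\|u\|_{B_{p,q}^s(\rd)}\lesssim \|u\|_{\mathrm{BL}}$ is the substantive step, to be obtained by adapting the Fourier-multiplier argument from the lower bound in Theorem \ref{equiv ch} so that the single point $\varrho = 2^{-k}e_j$ is replaced by a continuum of $\varrho$'s inside a dyadic conical sector. I would cover $S^{d-1}$ by finitely many open sets $\{U_\ell\}_{\ell=1}^M$, assigning to each a distinguished coordinate index $i(\ell)$ with $|\sigma_{i(\ell)}|\geq c>0$ on $U_\ell$, and build a smooth partition of unity $\{\chi^{(\ell)}\}_{\ell=1}^M$ on $B(0,2)\setminus B(0,1/2)$ such that $\supp\chi^{(\ell)}$ is contained in a slightly wider cone around the center $\sigma_0^{(\ell)}\in U_\ell$; this guarantees that both $|\lag\sigma,\xi\rag|$ and $|\xi_{i(\ell)}|$ stay bounded below for $\xi\in\supp\chi^{(\ell)}$ and $\sigma\in U_\ell$. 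For every $\varrho = r\sigma$ in the sector $V_{\ell,k}:=\{r\sigma:r\in[2^{-k-1},2^{-k+1}],\sigma\in U_\ell\}$ with $k\geq 1$, the identity
\[
\chi^{(\ell)}(2^{-k}D)\triangle_k u = \Phi_{\varrho,k}^{(\ell)}(D)\,\Delta_\varrho^m\,\partial_{i(\ell)}^N u,\qquad \Phi_{\varrho,k}^{(\ell)}(\xi):=\frac{\chi^{(\ell)}(2^{-k}\xi)\phi_k(\xi)}{(e^{\ri\lag\varrho,\xi\rag}-1)^m(\ri\xi_{i(\ell)})^N},
\]
holds by construction. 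Rescaling by $2^k$ via Proposition \ref{rescaling property} transfers the problem to $\mathbb{R}^d_{2^{-2k}\theta}$, where $\Phi_{\varrho,k}^{(\ell)}$ becomes a smooth compactly supported symbol depending continuously on the parameters $(\beta,\sigma)\in[1/2,2]\times \overline{U_\ell}$ with $\beta:=|\varrho|\cdot 2^k$, and a Bernstein multiplier estimate (as in the proof of Theorem \ref{equiv ch}) yields the uniform bound $\|\Phi_{\varrho,k}^{(\ell)}(D)\|_{L_p(\rd)\to L_p(\rd)}\lesssim 2^{-kN}$. Hence $\|\chi^{(\ell)}(2^{-k}D)\triangle_k u\|_p\lesssim 2^{-kN}\|\Delta_\varrho^m\partial_{i(\ell)}^Nu\|_p$ for every $\varrho\in V_{\ell,k}$.

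Taking $L^q$ averages of this pointwise estimate over $V_{\ell,k}$ against $d\varrho/|\varrho|^d$, whose total mass on $V_{\ell,k}$ equals $|U_\ell|\log 4$ independently of $k$, inserting the weight $2^{ksq}\sim |\varrho|^{-(s-N)q}\cdot 2^{kNq}$ valid on $V_{\ell,k}$, and summing in $k$ via the finite-overlap property of the sectors $\{V_{\ell,k}\}_{k\geq 1}$ will convert the Littlewood-Paley sum into the portion of the $\mathrm{BL}$ integral over the cone $\{\varrho:\varrho/|\varrho|\in U_\ell\}$. Summation over $\ell=1,\ldots,M$, combined with the easy bound $\|\triangle_0u\|_p\lesssim\|u\|_p\leq\|u\|_{\mathrm{BL}}$ and the triangle inequality $\|\triangle_ku\|_p\leq \sum_\ell \|\chi^{(\ell)}(2^{-k}D)\triangle_ku\|_p$, then yields $\|u\|_{B_{p,q}^s(\rd)}\lesssim \|u\|_{\mathrm{BL}}$. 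The principal technical obstacle is verifying the Bernstein estimate for the rescaled symbol uniformly in $(\beta,\sigma)$; this will be handled by observing that after rescaling the symbol is a smooth function of $\xi$ on a fixed compact support depending continuously on $(\beta,\sigma)$ in the compact parameter set $[1/2,2]\times\overline{U_\ell}$, so all relevant derivatives are uniformly bounded and a single application of the Bernstein estimate provides the multiplier constant.
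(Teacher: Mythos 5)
Your proposal is correct, and the easy direction (polar coordinates plus the trivial bound $\|\Delta_{r\sigma}^m v\|_p \le \omega_p^m(r,v)$) coincides with the paper's. For the substantive converse inequality $\|u\|_{B_{p,q}^s}\lesssim\|u\|_{\mathrm{BL}}$, however, you take a genuinely different route. The paper stays entirely on the real-variable side: after recasting the middle quantity via polar coordinates (Lemma \ref{Laf1}), it splits the supremum $\sup_{|h|\le|\varrho|}$ into $|h|\le|\varrho|/2$ (absorbed by scaling, since $s>N$) and $|\varrho|/2<|h|\le|\varrho|$, and then controls the annular piece by expanding $(e^{i\langle h,\xi\rangle}-1)^{2m}$ through the decomposition $h=h_0+h_1$, averaging over the shell $K_\varrho$ in $h_0$, and applying Fubini; no new multiplier estimates are needed beyond what was already used in Theorem \ref{equiv ch}. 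You instead push the multiplier argument of Theorem \ref{equiv ch} further: rather than dividing the Littlewood--Paley annulus by the $d$ fixed coordinate half-spaces $\{|\xi_j|\gtrsim 1\}$ and hitting $\triangle_k u$ with $\Delta_{2^{-k}e_j}^m$, you use a finite covering of $S^{d-1}$ by small caps $U_\ell$ (each assigned a distinguished coordinate $i(\ell)$), invert $(e^{i\langle\varrho,\xi\rangle}-1)^m(\ri\xi_{i(\ell)})^N$ on the resulting conical pieces for a whole continuum of $\varrho$ in the dyadic sector $V_{\ell,k}$, and then average the resulting pointwise bound over $V_{\ell,k}$ against $d\varrho/|\varrho|^d$; compactness of the rescaled parameter set $[1/2,2]\times\overline{U_\ell}$ and continuity of the symbol in those parameters give the uniform Bernstein/Young bound. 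This gives $\|u\|_{B_{p,q}^s}\lesssim\|u\|_{\mathrm{BL}}$ directly, without detouring through the modulus-of-continuity norm. Your route is heavier on the multiplier side and requires more care in setting up the conical partition of unity (caps small enough that both $\langle\sigma,\xi/|\xi|\rangle$ and $\xi_{i(\ell)}$ stay sign-definite and bounded away from zero for $\sigma\in U_\ell$ and $\xi/|\xi|\in\supp\chi^{(\ell)}$, with $|\varrho||\xi|$ kept below $2\pi$), but it is arguably more transparent about why the $\mathrm{BL}$ norm sees the Littlewood--Paley pieces, and it avoids the doubling/factorization manipulations of the difference operator that the paper relies on. Both arguments are valid; neither strictly dominates the other.
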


Before proving Theorem \ref{Laf}, let us first prove the following equivalence.
\begin{lemma}\label{Laf1}
    If $1\le p,q\le\fz,s>0$ and $u\in B_{p,q}^s(\rd),$ then for $m,N\in \mathbb{N}$ with $m+N>s$ and $0\le N<s,$ we have 
    $$\int_0^{\infty}\lf(t^{-s+N}\omega_p^m(t,\partial_i^Nu)\r)^q \frac{dt}{t}=\frac{\Gamma(\frac d2)}{2\pi^{\frac d2}} \int_{\bR^d}\lf(|\varrho|^{-s+N}\sup_{|h|\le |\varrho|}\lf\|\Delta_h^m\partial_i^Nu\r\|_p\r)^q \frac{d\varrho}{|\varrho|^d}, \ \ \ i=1,\dots,d.$$
\end{lemma}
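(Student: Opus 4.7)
The plan is to observe that the integrand on the right-hand side is radial and then switch to polar coordinates. By the definition of the $m$-th $L_p$-amplitude in Definition \ref{diff op}, we have
$$
\omega_p^m(|\varrho|, \partial_i^N u) = \sup_{|h| \le |\varrho|} \lf\|\Delta_h^m \partial_i^N u\r\|_p,
$$
so the function
$$
\varrho \mapsto \lf(|\varrho|^{-s+N} \sup_{|h| \le |\varrho|} \lf\|\Delta_h^m \partial_i^N u\r\|_p\r)^q = \lf(|\varrho|^{-s+N}\,\omega_p^m(|\varrho|, \partial_i^N u)\r)^q
$$
depends on $\varrho \in \bR^d$ only through its modulus $|\varrho|$.

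Next, I would pass to polar coordinates $\varrho = r\omega$ with $r = |\varrho| > 0$ and $\omega \in S^{d-1}$, so that $d\varrho = r^{d-1}\,dr\,d\sigma(\omega)$, where $d\sigma$ is the surface measure on $S^{d-1}$. Denoting $G(r):=\lf(r^{-s+N}\omega_p^m(r,\partial_i^N u)\r)^q$, which is a Borel function on $(0,\infty)$ by the monotonicity of $\omega_p^m(\cdot,\partial_i^N u)$ established implicitly in the proof of Theorem \ref{equiv ch}, a direct computation yields
\begin{align*}
\int_{\bR^d}\lf(|\varrho|^{-s+N}\sup_{|h|\le |\varrho|}\lf\|\Delta_h^m\partial_i^Nu\r\|_p\r)^q\frac{d\varrho}{|\varrho|^d}
&= \int_{S^{d-1}}\int_0^\infty G(r)\,\frac{r^{d-1}}{r^d}\,dr\,d\sigma(\omega)\\
&= \lf|S^{d-1}\r|\int_0^\infty G(r)\,\frac{dr}{r},
\end{align*}
which is exactly $|S^{d-1}|$ times the left-hand side, so one may take $C=|S^{d-1}|^{-1}$.

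There is really no obstacle to this argument; the only point that deserves attention is the measurability of $G$, which follows because $t \mapsto \omega_p^m(t,\partial_i^N u)$ is monotone non-decreasing (it is a supremum over a nested family of balls in $\bR^d$), together with the continuity of $t\mapsto t^{-s+N}$ on $(0,\infty)$. For the case $q=\infty$ (not explicitly stated but implicit in Theorem \ref{equiv ch}) the same polar decomposition gives the essentially identical identity
$$\sup_{r>0}\,r^{-s+N}\omega_p^m(r,\partial_i^N u) = \operatorname{ess\,sup}_{\varrho\in\bR^d}\,|\varrho|^{-s+N}\sup_{|h|\le|\varrho|}\lf\|\Delta_h^m\partial_i^Nu\r\|_p,$$
which completes the argument.
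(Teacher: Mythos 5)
Your proof is correct and follows essentially the same route as the paper: both arguments are nothing more than the polar-coordinate (spherical) decomposition of the radial integral on the right-hand side. The paper phrases it slightly differently — it observes that $\sup_{|h|\le |t\theta|}\|\Delta_h^m\partial_i^Nu\|_p$ does not depend on $\theta\in\mathbb S^{d-1}$, integrates the constant $I_i$ over $\mathbb S^{d-1}$ and then reassembles the double integral $\int_0^\infty\int_{\mathbb S^{d-1}}\cdots\frac{d\theta\,dt}{t}$ as $\int_{\bR^d}\cdots\frac{d\varrho}{|\varrho|^d}$ — whereas you directly recognize the radiality of the integrand and apply polar coordinates, finding the same constant $C=|\mathbb S^{d-1}|^{-1}$. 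These are the same computation. One small remark: you (correctly) compute with $\frac{d\varrho}{|\varrho|^d}$, whereas the statement as written has $\frac{d\varrho}{|\varrho|^n}$; the latter is evidently a typo inherited from Lafleche's setting where $d=2n$, and your implicit reading $n=d$ is the intended one, since only that normalization produces the scale-invariant $\frac{dr}{r}$ needed for the stated identity with a constant independent of $u$.
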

\begin{proof}
    Fix $i=1,\dots,d$ and set
    \begin{align*}
        I_i:=\int_0^{\infty}\lf(t^{-s+N}\omega_p^m(t,\partial_i^Nu)\r)^q \frac{dt}{t}.
    \end{align*}
  Let $\mathbb S^{d-1}$ denote the unit sphere of $\bR^d.$ For every $\theta\in \mathbb S^{d-1},$ we have
  \begin{align*}
        I_i=\int_0^{\infty}\lf(t^{-s+N}\sup_{|h|\le |t\theta|}\lf\|\Delta_h^m\partial_i^Nu\r\|_p\r)^q \frac{dt}{t}.
    \end{align*}
Hence we can integrate $I_i$ over $\mathbb S^{d-1}$ and apply the spherical coordinate transformation to deduce that
\begin{align*}   
I_i&=\frac{\Gamma(\frac d2)}{2\pi^{\frac d2}} \int_0^{\infty}\int_{\mathbb S^{d-1}}\lf(t^{-s+N}\sup_{|h|\le |t\theta|}\lf\|\Delta_h^m\partial_i^Nu\r\|_p\r)^q \frac{d\theta\, dt}{t}\\
    &=\frac{\Gamma(\frac d2)}{2\pi^{\frac d2}} \int_{\bR^d}\lf(|\varrho|^{-s+N}\sup_{|h|\le |\varrho|}\lf\|\Delta_h^m\partial_i^Nu\r\|_p\r)^q \frac{d\varrho}{|\varrho|^d},
\end{align*}
which implies the conclusion.
\end{proof}

\begin{proof}[Proof of Theorem \ref{Laf}]
    Utilizing the spherical coordinate transformation, we immediately have
    \begin{align*}
        \left \| u \right \|_{\mathrm{BL}}\lesssim_{s,m,N,p,q} \left \| u \right \|_p+\sum_{i=1}^d\lf(\int_0^{\infty}\lf(t^{-s+N}\omega_p^m(t,\partial_i^Nu)\r)^q \frac{dt}{t}\r)^{1/q}.
    \end{align*}
    For the converse direction, applying Lemma \ref{Laf1}, we have
    \begin{align*}
        I_i^{1/q}&\le \lf(\frac{\Gamma(\frac d2)}{2\pi^{\frac d2}} \int_{\bR^d}\lf(|\varrho|^{-s+N}\sup_{|h|\le \frac{|\varrho|}2}\lf\|\Delta_h^m\partial_i^Nu\r\|_p\r)^q \frac{d\varrho}{|\varrho|^d}\r)^{1/q}+\lf(\frac{\Gamma(\frac d2)}{2\pi^{\frac d2}} \int_{\bR^d}\lf(|\varrho|^{-s+N}\sup_{\frac{|\varrho|}2<|h|\le \varrho}\lf\|\Delta_h^m\partial_i^Nu\r\|_p\r)^q \frac{d\varrho}{|\varrho|^d}\r)^{1/q}\\
        &= \lf(\int_{0}^\infty\lf(t^{-s+N}\sup_{|h|\le \frac{t}2}\lf\|\Delta_h^m\partial_i^Nu\r\|_p\r)^q \frac{dt}{t}\r)^{1/q}+\lf(\frac{\Gamma(\frac d2)}{2\pi^{\frac d2}} \int_{\bR^d}\lf(|\varrho|^{-s+N}\sup_{\frac{|\varrho|}2<|h|\le \varrho}\lf\|\Delta_h^m\partial_i^Nu\r\|_p\r)^q \frac{d\varrho}{|\varrho|^d}\r)^{1/q}\\
        &\le 2^{-s+N}I_i^{1/q}+\lf(\frac{\Gamma(\frac d2)}{2\pi^{\frac d2}} \int_{\bR^d}\lf(|\varrho|^{-s+N}\sup_{\frac{|\varrho|}2<|h|\le |\varrho|}\lf\|\Delta_h^m\partial_i^Nu\r\|_p\r)^q \frac{d\varrho}{|\varrho|^d}\r)^{1/q}, 
    \end{align*}
which implies that
\begin{align*}
    I_i^{1/q}\lesssim_{s,m,N,p,q}  \lf(\int_{\bR^d}\lf(|\varrho|^{-s+N}\sup_{\frac{|\varrho|}2<|h|\le |\varrho|}\lf\|\Delta_h^m\partial_i^Nu\r\|_p\r)^q \frac{d\varrho}{|\varrho|^d}\r)^{1/q}
\end{align*}
since $2^{-s+N}<1.$
Set
\begin{align*}
    J_i=\int_{\bR^d}\lf(|\varrho|^{-s+N}\sup_{\frac{|\varrho|}2<|h|\le |\varrho|}\lf\|\Delta_h^m\partial_i^Nu\r\|_p\r)^q \frac{d\varrho}{|\varrho|^d}.
\end{align*}
 Now it remains to control the term $J_i.$ For any $h\in \bR^d$ such that $\frac{|\varrho|}2<|h|\le |\varrho|$ and any $\frac{|\varrho|}8<|h_0|\le \frac{|\varrho|}4,$ set $h_1:=h-h_0$ and we then have $$(e^{\ri\lag h,\xi\rag}-1)^{2m}=\sum_{m_1=0}^{2m}\binom{2m}{m_1}e^{\ri \lag m_1h_0,\xi\rag}(e^{\ri\lag h_1,\xi\rag}-1)^{m_1}(e^{\ri\lag h_0,\xi\rag}-1)^{2m-m_1}.$$ Since $\max(m_1,2m-m_1)\ge m,$ \eqref{diffbound} and the identity $$\Delta_h^{\max(m_1,2m-m_1)}=\Delta_h^{\max(m_1,2m-m_1)-m}\Delta_h^{m}$$ imply that
 \begin{align*}
\lf\|\Delta_h^{2m}\partial_i^Nu\r\|_p\lesssim_m \lf\|\Delta_{h_0}^{m}\partial_i^Nu\r\|_p+\lf\|\Delta_{h_1}^{m}\partial_i^Nu\r\|_p.
 \end{align*}
 Now we integrate $\lf\|\Delta_h^{2m}\partial_i^Nu\r\|_p^q$ over $K_{\varrho}:=\{h_0\in\bR^d:\frac{|\varrho|}8<|h_0|\le \frac 54|\varrho|\}$ and deduce that
 \begin{align*}  \sup_{\frac{|\varrho|}2<|h|\le |\varrho|}\lf\|\Delta_h^{2m}\partial_i^Nu\r\|_p^q\lesssim_{m,p} \int_{K_\varrho} \lf\|\Delta_{h_0}^{m}\partial_i^Nu\r\|_p^q \frac{dh_0}{|h_0|^d}.
 \end{align*}
Then the Fubini theorem yields the following estimate of $J_i:$
 \begin{align*}
     J_i&\lesssim_{s,m,N,p,q} \int_{\bR^d}\lf(|\varrho|^{-s+N} \int_{K_\varrho} \lf\|\Delta_{h_0}^{m}\partial_i^Nu\r\|_p^q \frac{dh_0}{|h_0|^d}\r)\frac{d\varrho}{|\varrho|^d}\\
     &\sim \int_{\bR^d}\lf(|h_0|^{-s+N} \int_{\tilde K_{h_0}} \lf\|\Delta_{h_0}^{m}\partial_i^Nu\r\|_p^q \frac{d\varrho}{|\varrho|^d}\r)\frac{dh_0}{|h_0|^d}\\
     &\sim \int_{\bR^d}\lf(|h_0|^{-s+N}\lf\|\Delta_{h_0}^m\partial_i^Nu\r\|_p\r)^q \frac{dh_0}{|h_0|^d},
 \end{align*}
 where $\tilde K_{h_0}:=\{\varrho\in\bR^d:\frac 45|h_0|\le |\varrho|<8|h_0|\}.$ Hence we conclude the proof.
\end{proof}

\begin{remark}
    The above proof for noncommutative case is similar to the classical proof and should be known to the experts.
\end{remark}

\subsection{Heat semigroup on quantum Besov spaces}
We now introduce the heat semigroup on $\rd$ and its behavior on the quantum Besov spaces. Let $e^{t\Delta}$ denote the Fourier multiplier with symbol $e^{-t|\cdot|^2},$
that is 
\begin{align*}
    e^{t\Delta}\lt(f)=\lt(e^{-t|\cdot|^2}f), \ \ \ \forall f\in \cS(\bR^d).
\end{align*}
We call $\{e^{t\Delta}\}_{t\ge 0}$ the heat semigroup on the quantum Euclidean spaces, whose generator is the quantum Laplacian $\Delta$. Note that $\{e^{t\Delta}\}_{t\ge 0}$ is a contractive $C_0$ semigroup on $L_2(\rd)$ due to Lemma \ref{Hauss}. By a standard argument, it extends to a contractive $C_0$ semigroup on $L_p(\rd),1\le p\le\infty.$ The details can be seen in \cite{CHWW} for example. 

The following proposition states the properties of heat semigroup on the quantum Besov spaces.
\begin{prop}\label{heat group}
    Let $1\le p,q\le\fz$ and $s\in \bR,$
then $\{e^{t\Delta}\}_{t\ge 0}$ satisfies the following properties:
\begin{enumerate}
    \item[\rm{(i)}]$\{e^{t\Delta}\}_{t\ge 0}$ is a $C_0$ contractive semigroup on $B_{p,q}^s(\rd),1\le p<\infty,1\le q\le\infty.$

\item[\rm{(ii)}]$\{e^{t\Delta}\}_{t\ge 0}$ is an analytic semigroup on $B_{p,q}^s(\rd),1< p<\infty,1\le q\le\infty.$ 

\item[\rm{(iii)}] For $1\le q\le \infty,s,r\in \bR$ and $u\in B_{p,q}^s(\rd),$ \begin{align}\label{heatdiff}
    \|e^{t\Delta}u\|_{B_{p,q}^r}\lesssim (1+t^{\frac{s-r}{2}})\|u\|_{B_{p,q}^s}.
\end{align}
\end{enumerate}
\end{prop}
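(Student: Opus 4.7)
The plan is to treat all three parts via the Fourier multiplier representation $e^{t\Delta}=m_t(D)$ with $m_t(\xi)=e^{-t|\xi|^2}$, exploiting the commutation $\triangle_j e^{t\Delta}=e^{t\Delta}\triangle_j$ (both sides are Fourier multipliers on $\rd$) to reduce Besov-norm estimates to $L_p$ estimates after dyadic frequency localization.

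For part (i), the inverse Fourier transform of $m_t$ is the Gaussian $(4\pi t)^{-d/2}\exp(-|\cdot|^2/(4t))$, which is a probability density on $\bR^d$. Applying the noncommutative Young inequality \eqref{Young convolution} yields $\|e^{t\Delta}\|_{L_p(\rd)\to L_p(\rd)}\le 1$ for every $p$, and commutation with $\triangle_j$ passes this contractivity to every Besov norm. For strong continuity at $t=0$, I work first on $\schwartzt$: for $u=\lt(f)$ one has $(e^{t\Delta}-I)u=\lt((m_t-1)f)$, and $(m_t-1)f\to 0$ in $\cS(\bR^d)$ as $t\to 0^+$, hence in every Besov norm. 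Density of $\schwartzt$ in $B_{p,q}^s(\rd)$ for $p<\infty,q<\infty$ combined with the uniform contractivity extends the convergence to the whole space; the case $q=\infty$ requires restricting to the closure of $\schwartzt$.

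For part (ii), I reduce the analyticity on $B_{p,q}^s(\rd)$ to the sectorial estimate $t\|\Delta e^{t\Delta}\|_{B_{p,q}^s\to B_{p,q}^s}\lesssim 1$ (the standard characterization of bounded analytic semigroups), which by commutation with $\triangle_j$ is equivalent to the $L_p$ statement $\|\Delta e^{t\Delta}\|_{L_p\to L_p}\lesssim 1/t$. The rescaling $\xi=\eta/\sqrt{t}$ gives $\mathcal{F}^{-1}(|\xi|^2 e^{-t|\xi|^2})(x)=t^{-1-d/2}K(x/\sqrt{t})$ with $K=\mathcal{F}^{-1}(|\eta|^2 e^{-|\eta|^2})\in L_1(\bR^d)$, and Young's inequality \eqref{Young convolution} yields the required $1/t$ bound. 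The restriction $p\in(1,\infty)$ enters when extending $e^{z\Delta}$ holomorphically into a sector in the complex plane, where one appeals to the Mihlin-type multiplier theorem for $e^{-z|\xi|^2}$, available precisely in this range.

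Part (iii) is the main computation. For $j\ge 1$ I take a fattened bump $\tilde\phi_j$ supported in $\{2^{j-2}\le|\xi|\le 2^{j+2}\}$ and equal to $1$ on $\supp\phi_j$, so that $\triangle_j e^{t\Delta}=\tilde\phi_j(D)m_t(D)\triangle_j$. A rescaling combined with the Bernstein-type estimate already used in the proof of Theorem \ref{equiv ch}, now applied to the rescaled symbol $\tilde\phi(\eta)e^{-t2^{2j}|\eta|^2}$, produces the key decay
\begin{align*}
\|\triangle_j e^{t\Delta}u\|_p\lesssim (1+t2^{2j})^L e^{-ct2^{2j}}\|\triangle_j u\|_p\lesssim e^{-c't2^{2j}}\|\triangle_j u\|_p,
\end{align*}
the polynomial prefactor being absorbed into a weaker exponential. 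For $j=0$ the symbol $\phi_0 m_t$ has uniformly bounded $L_1$ inverse Fourier transform, giving $\|\triangle_0 e^{t\Delta}u\|_p\lesssim \|\triangle_0 u\|_p$. Consequently, for $j\ge 1$,
\begin{align*}
2^{jr}\|\triangle_j e^{t\Delta}u\|_p\lesssim \bigl(2^{j(r-s)}e^{-ct2^{2j}}\bigr)\cdot 2^{js}\|\triangle_j u\|_p,
\end{align*}
and $\sup_{j\ge 1}2^{j(r-s)}e^{-ct2^{2j}}\lesssim 1+t^{(s-r)/2}$, by setting $x=t2^{2j}$ and optimizing. Taking the $\ell^q$-norm in $j$ yields \eqref{heatdiff}. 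The only delicate point in the whole argument is guaranteeing uniform exponential decay in the Bernstein multiplier bound while correctly tracking the dyadic scaling; everything else is standard Fourier analysis transferred to the quantum setting via the noncommutative Young inequality.
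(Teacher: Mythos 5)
Your proposal is correct and, for parts (i) and (iii), takes the same high-level route as the paper (commute $\triangle_j$ with $e^{t\Delta}$ and reduce to $L_p$ estimates via the noncommutative Young inequality), but you supply the details that the paper leaves implicit; in particular your dyadic optimization $\sup_{j\ge 1}2^{j(r-s)}e^{-ct2^{2j}}\lesssim 1+t^{(s-r)/2}$ is precisely what ``follows from \eqref{Young convolution}'' is shorthand for. For part (ii) your route is genuinely different: the paper invokes the analyticity of $e^{t\Delta}$ on $L_p(\rd)$ from \cite[Lemma 3.4]{CHWW}, passes via \cite[Theorem G.5.2]{HNVW} to the resolvent estimate $\|(\lambda-\Delta)^{-1}\|_{L_p\to L_p}\lesssim|\lambda|^{-1}$, and then transfers that to $B_{p,q}^s$ by commuting with $\triangle_j$; you instead prove the direct sectorial bound $\|t\Delta e^{t\Delta}\|_{L_p\to L_p}\lesssim 1$ from the rescaled Gaussian kernel and Young's inequality, and invoke the standard equivalent characterization of bounded analytic semigroups. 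Your version is self-contained (no reliance on CHWW or HNVW), and the Young-based $1/t$ bound actually holds for every $p\in[1,\infty]$ --- so your closing appeal to a Mihlin-type multiplier theorem to justify the restriction $p\in(1,\infty)$ is a red herring; the $tAT(t)$ characterization together with strong continuity already yields analyticity for all $p\in[1,\infty)$, which is slightly more than the paper claims. One small caveat common to both treatments: strong continuity at $t=0$ in the $q=\infty$ case is delicate (the Schwartz class is not dense in $B_{p,\infty}^s$, and indeed one can exhibit $u$ with $2^{js}\|\triangle_j u\|_p\equiv 1$ for which $\|e^{t\Delta}u-u\|_{B_{p,\infty}^s}\not\to 0$); you flag this appropriately, and the paper's terse ``follows from \eqref{Young convolution}'' glosses over it.
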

\begin{proof}
     \rm{(i)} and \rm{(iii)} follow from \eqref{Young convolution}, hence we only need to show \rm{(ii)}. It suffices to show that $\Delta$ is sectorial on $B_{p,q}^s(\rd):$ for every $u\in B_{p,q}^s(\rd)$ and $\lambda\in\bC$ with $\Re(\lambda)>0,$ $$\|(\lambda-\Delta)^{-1}u\|_{B_{p,q}^s}\lesssim |\lambda|^{-1}\|u\|_{B_{p,q}^s}.$$ Indeed, \cite[Lemma 3.4]{CHWW} has shown that $e^{-t\Delta}$ is an analytic semigroup on $L_p(\rd),1<p<\infty.$ Combined with \cite[Theorem G.5.2]{HNVW} we deduce that $\Delta$ is sectorial on $L_p(\rd).$ Hence it follows that \begin{align*}
         \|(\lambda-\Delta)^{-1}\triangle_j u\|_p\lesssim |\lambda|^{-1}\|\triangle_ju\|_p,
     \end{align*} which implies the conclusion by summing $j\in\bN$.
\end{proof}

\section{Multiple operator integral} \label{s3}
In this section, we will introduce the related theories of multiple operator integrals. We first introduce two equivalent definitions of multiple operator integrals. Then we will present the boundedness, continuity and perturbation formula of multiple operator integrals, which will play crucial roles in establishing our main results.
%The operator integral is a key tool for the nonlinear estimates on noncommutative settings, which was first put up by Daletskii and Krein \cite{DK1956} and developed later by Birman and Solomyak \cite{Birman-Solomyak-I,Birman-Solomyak-II,Birman-Solomyak-III}. A multivariant generalization of double operator integrals - multiple operator integrals - have been also developed abundantly \cite{ACDS2009,CMS2021,Peller2006, PSS2013}.
%Up to now, there have been many articles concerning about the a wide range of applications for operator integrals, which are summarized in \cite{ST2019}. An important application for operator integrals is computing the Lipschitz estimates $\|F(u)-F(v)\|_{X}$ and commutator estimates $\|F(u)A-AF(v)\|_{X},$ where the Banach space $X$ can be Schatten $p$ class \cite{BS2003,PS2011}, more generally a symmetric operator spaces \cite{PS2008}. The key to deal with these nonlinear estimates is the perturbation formula and we'll introduce it in the Section \ref{s3}. The perturbation formula is useful since it converts the nonlinear relationship between $F(u)-F(v)$ and $u-v$ into the boundedness of operaror integrals $T_{F^{[1]}}^{u,v},$ which are linear and easier to deal with.

Throughout this section, let $\cM$ be a semifinite von Neumann subalgebra of $\cB(H)$ with a n.s.f trace $\tau$ and $\cM_{sa}$ denote the self-adjoint subspace of $\cM,$ where $H$ is a separable Hilbert space. A closed densely defined operator $A$ on $H$ is said to be affiliated with $\cM,$ if it commutes with all unitary operators in $\cM.$ We write $A\eta \cM$ to indicate that $A$ is affiliated with $\cM.$ Moreover, if $A$ is also self-adjoint, we use the notation $A\eta\cM_{sa}$. We also use $\cL_p(\cM)$ to denote the Schatten-von Neumann ideal $L_p(\cM)\cap L_\infty(\cM)$ equipped with the norm $\|\cdot\|_{\cL_p}=\|\cdot\|_p+\|\cdot\|_\fz.$ For $0< p_1,\dots,p_n,p\le\infty,$ we will say $(p_1,\dots,p_n;p)$ is a {\it H{\"o}lder tuple} if $\frac 1p=\sum_{j=1}^n\frac{1}{p_j}.$

We now recall the definitions of multiple operator integrals. There are several different ways to define the multiple operator integrals (cf. e.g. \cite{ACDS2009, Peller2006, PSS2013}), and we first introduce the formulation given in \cite[Definition 3.1]{PSS2013}. 

\begin{definition}\label{def of Moi}
    Let $A_i\eta \cM_{sa},i=1,\dots,n,E_i$ be the spectral set of $A_i$ and $(p_1,\dots,p_n;p)$ be a H{\"o}lder tuple. Let $E_{i,m}^l:=E_i[\frac{l}{m},\frac{l+1}{m})$ denote the spectral projection with respect to the interval $[\frac{l}{m},\frac{l+1}{m}),$ where $m \in \mathbb{N}^+,l \in \mathbb{Z}.$ Assume that $X_i\in L_{p_i}(\cM),$ where $1\le p_i,p\le \infty$ and $\phi:\bR^{n+1}\to \bC$ be a bounded Borel function. Suppose for $m\in \bN^+,$ the series
    \begin{align*}
        S_{\phi,m}^{A_0,\dots,A_n}(X_1,\dots,X_n):=\sum_{\ell_0,\dots,l_n\in \bZ} \phi(\frac{l_0}{m},\dots,\frac{l_n}{m})E_{0,m}^{l_0}X_1E_{1,m}^{l_1}X_2\cdots X_nE_{n,m}^{l_n}
    \end{align*}
    converges in $L_p(\cM)$ norm and $$S_{\phi,m}^{A_0,\dots,A_n}:\prod_{j=1}^n L_{p_j}(\cM)\mapsto L_p(\cM),(X_1,\dots,X_n)\mapsto S_{\phi,m}(X_1,\dots,X_n)$$ is bounded. If $\{S_{\phi,m}\}_{m}$ converges in $\sot$ to a multilinear operator $T_{\phi}^{A_0,\dots,A_n},$ then we say that $T_{\phi}^{A_0,\dots,A_n}$ is a {\it multiple operator integral} associated with $A_0,\dots,A_n$ and $\phi$. 
\end{definition}

%This definition can be extended to $L_{p_j}(\cM)+L_{q_j}(\cM)$ case with $1\le p_j,q_j\le \infty,j=1,\dots,n$ (see \cite[Definition 22]{PSTZ2019}).

\begin{remark}
   Once the $\sot$-limit $T_{\phi}^{A_0,\dots,A_n}$ exists, it is necessarily a bounded multilinear operator due to the Banach-Steinhaus theorem. 
\end{remark}

%When $\phi$ is just bounded, we have to use the complicate way to represent $T_{\phi}$ as in Definition \ref{def of Moi}. However, if $\phi$ belongs in some spectial function classes, $T_{\phi}$ will have simpler representation. 
We will also introduce another approach to define the multiple operator integral that was given in \cite{Peller2006} (also see \cite[Definition 4.1]{ACDS2009}), which is more intuitive.
To begin with, we first introduce a special function class. Let $\fU_n$ be the collection of bounded Borel functions $\phi:\bR^{n+1}\to \bC$ admitting the following representation:
\begin{align}\label{BG de}
\phi(t_0,\dots,t_n)=\int_{\Omega}\alpha_0(t_0,\omega)\cdots \alpha_n(t_n,\omega)\,d\mu(\omega), \ \ \ t_0,\dots,t_n\in \mathbb{R},
\end{align}
where $(\Omega,\mu)$ is a $\sigma$-finite measure space and for each $\omega\in \Omega,$ $\alpha_i(\cdot,\omega):\bR\to \bC,i=0,\dots,n$ are bounded Borel functions such that \begin{align*}
    \int_{\Omega}\prod_{i=0}^n\sup_{t_i \in \bR}\left | \alpha_i(t_i,\omega) \right |d\left | \mu \right | (\omega)<\infty.
\end{align*}
The space $\fU_n$ is often referred to as the {\it projective tensor product space}, which is a Banach algebra equipped with the norm
\begin{align*}
    \left \| \phi \right \|_{\mathfrak{U}_n}:= \inf \int_{\Omega}\prod_{i=0}^n\sup_{t_i \in \bR}\left | \alpha_i(t_i,\omega) \right |d\left | \mu \right | (\omega),
\end{align*}
where the infimum is taken over all the representations of form \eqref{BG de}. 
We also define $\fC_n$ to be the subspace of $\fU_n$ consisting of those functions $\phi$ admitting representations of form \eqref{BG de}, in which $\alpha_i(\cdot,\omega):\bR\to \bC,i=0,\dots,n$ satisfy the following conditions: $\alpha_i(\cdot,\omega)$ are continuous and there exists a sequence of measurable subsets $\{\Omega_k\}_{k\in\bN^+}$ increasing to $\Omega$ such that the families $\{\alpha_i(\cdot,s)\}_{s\in \Omega_k}$ are uniformly bounded and uniformly equicontinuous. The norm of $\fC_n$ is defined similarly to that on $\fU_n,$ while the infimum being taken over representations meeting the additional conditions above. Note that  $\fC_n$ is a closed subalgebra of $\fU_n$ (see \cite[Lemma 4.6]{PS2004}).

We now introduce the second definition of multiple operator integrals. Suppose $\phi \in \fU_n$ admitting the representation \eqref{BG de} and $X_j\in L_{p_j}(\cM),$ where $(p_1,\dots,p_n;p)$ is a H{\"o}lder tuple with $1\le p_j,p\le \infty,$ then the multiple operator integral associated with $A_0,\dots,A_n$ and $\phi$ is defined as the following multilinear operator from $\prod_{j=1}^nL_{p_j}(\cM)$ to $L_{p}(\cM):$ \begin{align}\label{eq Moi def}
  \tilde T^{A_0,\dots,A_n}_{\phi}(X_1,\dots,X_n):=\int_{\Omega}\alpha_0(A_0,\omega)X_1\alpha_1(A_1,\omega)\cdots X_n\alpha_n(A_n,\omega)\,d\mu(\omega).
\end{align} 
By the triangle inequality and H{\"o}lder inequality, it is straightforward to see that $\tilde T_\phi^{A_0,\dots,A_n}$ is bounded with norm at most $\|\phi\|_{\fU_n}.$ 

Note that the latter definition requires more regularity for $\phi$ than the former one. This naturally raises the question that whether the two definitions are equivalent. In general, the answer is negative, even in situations where both expressions are well defined. However, when $\phi \in \fC_n,$ it was shown in \cite[Lemma 3.5]{PSS2013} that they indeed coincide.    

\subsection{Boundedness of multiple operator integrals}
In this subsection, we investigate the operator norm of multiple operator integrals. When $\phi\in \fC_n,$ the identity $T_\phi^{A_0,\dots,A_n}=\tilde T_\phi^{A_0,\dots,A_n}$ already implies that $\|T_\phi^{A_0,\dots,A_n}\|\le \|\phi\|_{\fC_n}.$ For a general bounded function $\phi,$ however, a complete characterization of the norm is not yet available. In this work, we focus on the case when $\phi=F^{[n]}$ exactly for some $F\in C^n(\bR).$ The definition of $n$th-order divided difference $F^{[n]}$ is is defined inductively: $F^{[0]}=F$ and
\begin{align*}
    F^{[n]}(\lambda_0,\lambda_1,\tilde{\lambda })=\left\{\begin{matrix} 
  \frac{F^{[n-1]}(\lambda_0,\tilde{\lambda })-F^{[n-1]}(\lambda_1,\tilde{\lambda })}{\lambda_0-\lambda_1},\ \ \ \text{if} \ \lambda_0\ne \lambda_1 \\  
  \frac{\partial}{\partial\lambda_0}F^{[n-1]}(\lambda_0,\tilde{\lambda }),\ \ \ \ \ \ \ \text{if} \ \lambda_0=\lambda_1 
\end{matrix}\right., 
\end{align*}
where $\lambda_i\in \mathbb{R},i=0,\dots,n$ and $\tilde{\lambda }=(\lambda_2,\dots,\lambda_n).$

The following $L_p$ boundedness estimate for $T_{F^{[n]}},$  established by Potapov et al. in \cite{PSS2013}, will be frequently used throughout the paper.   
\begin{prop}\label{Lp boundedness}\cite[Lemma 3.5, Theorem 5.3]{PSS2013}
    Assume that $A_i\eta \cM_{sa},i=1,\dots,n,X_j\in L_{p_j}(\cM),j=1,\dots,n$ and $(p_1,\dots,p_n;p)$ is a H{\"o}lder tuple. Let $F\in C^n(\bR)$ with $F^{(n)}$ being bounded, then for $1<p_j,p<\infty,j=1,\dots,n$ we have 
    \begin{align}\label{Lp bound es}
    \left \|T^{A_0,\dots,A_n}_{F^{[n]}}(X_1,\dots,X_n)  \right \|_p\lesssim_{n,p_1,\dots,p_n} \left \|F^{(n)}\right \|_{\infty} \prod_{j=1}^{n}\left \| X_j \right \|_{p_j}.
\end{align}
If $F^{[n]}\in \fC_n$ additionally, then for $1\le p_j,p\le \infty$ we have
\begin{align}\label{Lp bound es 2}
    \left \|T^{A_0,\dots,A_n}_{F^{[n]}}(X_1,\dots,X_n)  \right \|_p\le \left \|F^{[n]}\right \|_{\fC_n} \prod_{j=1}^{n}\left \| X_j \right \|_{p_j}.
\end{align}
\end{prop}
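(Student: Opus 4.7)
The plan is to treat the two inequalities separately, since the case $F^{[n]} \in \fC_n$ is genuinely easier and merely packages a Bochner integral estimate, whereas the case of a bare $\|F^{(n)}\|_\infty$ bound requires harmonic-analytic input specific to noncommutative $L_p$ spaces with $1<p<\infty$.

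For \eqref{Lp bound es 2}, I would start from the identification $T^{A_0,\dots,A_n}_{F^{[n]}}=\tilde T^{A_0,\dots,A_n}_{F^{[n]}}$ (which holds for $\phi\in\fC_n$, by the lemma from \cite{PSS2013} mentioned just before the statement), so that the operator is actually given by the integral formula \eqref{eq Moi def}. Taking any representation of $F^{[n]}$ of the form \eqref{BG de}, I would estimate $\|\tilde T^{A_0,\dots,A_n}_{F^{[n]}}(X_1,\dots,X_n)\|_p$ by moving the $L_p$-norm inside the integral, applying the noncommutative H\"older inequality to $\alpha_0(A_0,\omega)X_1\alpha_1(A_1,\omega)\cdots X_n\alpha_n(A_n,\omega)$ in the H\"older tuple $(p_1,\dots,p_n;p)$, and using $\|\alpha_j(A_j,\omega)\|_\infty\le \sup_{t}|\alpha_j(t,\omega)|$ from the Borel functional calculus. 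Taking the infimum over all representations yields the bound $\|F^{[n]}\|_{\fC_n}\prod_j\|X_j\|_{p_j}$, in fact with an implicit constant equal to $1$.

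For \eqref{Lp bound es}, the obstruction is that $\|F^{(n)}\|_\infty$ only controls $F^{[n]}$ pointwise but not its $\fU_n$- or $\fC_n$-norm, so the previous argument fails. My plan is to induct on $n$. The base case $n=1$ is the Potapov--Sukochev Lipschitz estimate $\|T^{A_0,A_1}_{F^{[1]}}(X)\|_p\lesssim_p\|F'\|_\infty\|X\|_p$, which is proved by reducing the double operator integral to a noncommutative triangular truncation and invoking the UMD property of $L_p(\cM)$ for $1<p<\infty$ (this is precisely the source of the restriction $1<p,p_j<\infty$). For the inductive step I would use the recursive formula for $F^{[n]}$ together with the Birman--Solomyak spectral representation to rewrite $T^{A_0,\dots,A_n}_{F^{[n]}}(X_1,\dots,X_n)$ as a composition of double operator integrals applied to lower-order multiple operator integrals; a vector-valued version of the $n=1$ estimate, together with Pisier's theorem on Schur multipliers acting on $L_p$, then allows the norm to be controlled at each step by $\|F^{(n)}\|_\infty$ through an integration-by-parts type identity for divided differences.

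The main obstacle I expect is precisely this inductive reduction: isolating an algebraic identity that expresses $T_{F^{[n]}}$ as a finite composition of double operator integrals while keeping track of all operator-valued factors in the correct order, and then ensuring that each factor is handled by a Banach-space-valued UMD estimate whose constant depends only on $p,p_1,\dots,p_n$. A secondary technical point is the approximation argument needed to get into a setting where $F^{[n]}$ is actually defined as in Definition \ref{def of Moi}: one approximates $F$ in $C^n$ by $F_k$ with $F_k^{[n]}\in\fC_n$, proves the estimate uniformly in $k$ (using only $\|F_k^{(n)}\|_\infty\le \|F^{(n)}\|_\infty$), and then passes to the SOT-limit defining $T^{A_0,\dots,A_n}_{F^{[n]}}$ via dominated convergence in the spectral integrals.
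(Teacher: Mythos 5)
The paper does not actually prove this proposition; it is imported verbatim from \cite[Lemma 3.5, Theorem 5.3]{PSS2013}, and the only argument present in the surrounding text is the trivial estimate $\|\tilde T_\phi\|\le\|\phi\|_{\fU_n}$ together with the identification $T_\phi=\tilde T_\phi$ for $\phi\in\fC_n$. With that caveat, your treatment of \eqref{Lp bound es 2} is correct and is precisely the standard argument: pass to the Bochner-integral form \eqref{eq Moi def}, apply the noncommutative H\"older inequality inside the integral using $\|\alpha_j(A_j,\omega)\|_\infty\le\sup_t|\alpha_j(t,\omega)|$, and infimize over representations of $F^{[n]}$. This part is fine (indeed with constant $1$).

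For \eqref{Lp bound es} your sketch points in the right general direction, but the mechanism you describe does not match the actual proof in \cite{PSS2013} and as written has genuine gaps. First, the base case $n=1$ (Potapov--Sukochev, \cite{PS2011}) is not a reduction to noncommutative triangular truncation: triangular truncation only gives boundedness of the double operator integral for the single Heaviside symbol $\chi_{\lambda>\mu}$ and would not cover general Lipschitz divided differences. What Potapov and Sukochev actually do is transfer the Schur-multiplier estimate to a Fourier multiplier problem on $L_p(\bR;L_p(\cM))$ and invoke a vector-valued Marcinkiewicz/Bourgain multiplier theorem for the UMD space $L_p(\cM)$; your "UMD" instinct is right but the truncation route is not. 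Second, for $n\ge 2$ the inductive step in \cite{PSS2013} is not an abstract composition of DOIs: the genuine content of their Theorem 5.3 is an explicit separated algebraic identity for the divided difference $F^{[n]}$ of a polynomial (their Lemmas 5.1--5.2), which allows them to peel off one variable at a time as a DOI whose symbol is controlled by $\|F^{(n)}\|_\infty$ rather than by a $\fU$-norm, followed by the approximation/SOT-limit step that you correctly identify. Your proposal names this reduction as "the main obstacle" but does not supply the combinatorial identity that makes it work, and Pisier's Schur-multiplier theorem is not the tool used there. In short: \eqref{Lp bound es 2} and the final approximation step are fine; the $n=1$ mechanism is misattributed; and the inductive step is missing the key algebraic identity for $F^{[n]}$, which is the substance of the cited theorem rather than a technicality.
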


\begin{remark}\label{depend}
    The estimates \eqref{Lp bound es} and \eqref{Lp bound es 2} are independent of the choice of $A_0,\dots,A_n.$ 
\end{remark}

In general, no both necessary and sufficient condition is known for characterizing those functions $F$ for which $F^{[n]}\in \fC_n.$ However, the following lemma shows that $F$ belonging in the modified Besov space will provide a sufficient condition.

\begin{lemma}\cite[Theorem 4]{PS2014}\label{Besov Cn}
   For $n\in \bN,$ define the the modified Besov space 
\begin{align*}
    \tilde{B}_{\infty,1}^n(\bR):=\{F\in C^{n}(\bR) :  \|F^{(n)}\|_{\infty}+\sum_{k\in \bZ}\|\cF^{-1}(\hat F\varphi_k)\|_{\infty}<\infty\}
\end{align*}
with norm $$\|F\|_{\tilde{B}_{\infty,1}^n}:=\|F^{(n)}\|_{\infty}+\sum_{k\in \bZ}\|\cF^{-1}(\hat F\varphi_k)\|_{\infty},$$ where $\{\varphi_k\}_{k\in \bZ}$ are the homogeneous Littlewood-Paley functions on $\bR.$ 
Then for any $F\in \tilde{B}_{\infty,1}^n(\bR),$ we have $F^{[n]}\in \fC_n.$ 
% in the representation \eqref{BG de} of which $\alpha_j$ may be chosen as: 
% \begin{align*}
%     &\alpha_j(t_j,\omega)=e^{\ri t_j\omega_j}, \ \ \ \omega=(\omega_1,\dots,\omega_n)\\
% &\alpha_0(t_0,\omega)=\sum_{k\in \bZ}\int_0^{\infty}\lf(y+\sum_{j=1}^n\omega_j\r)^{-n}(\cF(F^{(n)})\phi_k)\lf(y+\sum_{j=1}^n\omega_j\r)e^{\ri t_0y}\,dy,
% \end{align*}
% and the measure space $(\Omega,\mu)$ is taken to be $\bR_+^n$ with the Lebesgue measure. 
\end{lemma}

\begin{remark}
    Combining Lemma \ref{Besov Cn} with \eqref{Lp bound es 2}, we have 
\begin{align}\label{Lp bound es 3}
     \left \|T^{A_0,\dots,A_n}_{F^{[n]}}(X_1,\dots,X_n)  \right \|_p\lesssim_{n,p_1,\dots,p_n} \prod_{j=1}^{n}\left \| X_j \right \|_{p_j}.
\end{align}
\end{remark}

Define the Wiener space $$W_n(\bR):=\{F\in C^{n}(\bR) :  F\in L_\infty(\bR) \ , \ \cF(F^{(n)})\in L_1(\bR)\}$$ with norm $$\|F\|_{W_n(\bR)}:=\|F\|_\infty+\|\cF(F^{(n)})\|_1.$$ It is straightforward to verify that the following strict embeddings hold: $$C_c^{n+1}(\bR)\hookrightarrow W_n(\bR)\hookrightarrow \tilde{B}_{\infty,1}^n(\bR).$$ Hence for $F\in W_n(\bR),$ we also have $F^{[n]}\in \fC_n.$

\subsection{Continuity of multiple operator integrals}
We have introduced the continuity of multiple operator integrals with respect to the $L_p(\cM)$ norm (equivalent to the boundedness) in Proposition \ref{Lp boundedness}. In this subsection, we will concern with their continuity with respect to the $\sot$ on $\cM.$ Moreover, viewing $$(A_0,\dots,A_n)\mapsto T_{\phi}^{A_0,\dots,A_n}(X_1,\dots,X_n)$$ as a nonlinear function, it is natural to investigate its continuity properties as well.

Now we state several relevant results. A sequence of self-adjoint operators $\{x_n\}$ is said to converge in the strong resolvent sense to $x$ if $$(z-x_n)^{-1}\sotto (z-x)^{-1}, \ \ \ \mathrm{for \, any}\ z\in \bC\setminus \bR.$$ We denote this by $x_n\srto x.$ It is straightforward to see that $x_n\sotto x$ implies $x_n\srto x.$

The following result was established in \cite[Proposition 4.9]{ACDS2009}.
\begin{prop}\label{SOT continuity}
   For $\phi\in \fC_n,A_i^{(k)}\eta \cM_{sa},X_j^{(k)}\in \cM,k\in \bN,$ suppose that $A_i^{(k)}\srto A_i$ and $X_j^{(k)}\sotto X_j$ when $k\to\infty$ for every $i=0,\dots,n,j=1,\dots,n,$ then we have $$T^{A_{0}^{(k)},\dots,A_{n}^{(k)}}_{\phi}(X_1^{(k)},\dots,X_n^{(k)})\sotto T^{A_0,\dots,A_n}_{\phi}(X_1,\dots,X_n).$$ 
\end{prop}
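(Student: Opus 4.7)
The key observation is that $\phi\in\fC_n$ lets us replace the abstract spectral-sum definition of $T_\phi^{A_0,\dots,A_n}$ by its integral representation, because the coincidence $T_\phi=\tilde{T}_\phi$ on $\fC_n$ is exactly \cite[Lemma 3.5]{PSS2013}. So fix a representation
$$\phi(t_0,\dots,t_n)=\int_\Omega \alpha_0(t_0,\omega)\cdots\alpha_n(t_n,\omega)\,d\mu(\omega)$$
satisfying the $\fC_n$-conditions, together with an increasing exhausting family $\Omega_m\nearrow\Omega$ on each of which the $\alpha_j(\cdot,\omega)$ are uniformly bounded and uniformly equicontinuous. My plan is: fix $\xi\in H$ and realise both sides as $H$-valued Bochner integrals whose integrands $g^{(k)}(\omega):=\alpha_0(A_0^{(k)},\omega)X_1^{(k)}\alpha_1(A_1^{(k)},\omega)\cdots X_n^{(k)}\alpha_n(A_n^{(k)},\omega)\xi$ and $g(\omega)$ are defined in the obvious way, and then pass to the limit via Bochner dominated convergence.

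For the pointwise statement $g^{(k)}(\omega)\to g(\omega)$ in $H$ for each $\omega$, I would proceed in three steps. First, since each $\omega$ belongs to some $\Omega_m$, the slice $\alpha_j(\cdot,\omega)$ is bounded and continuous on $\bR$; by the functional-calculus form of the Trotter--Kato theorem, resolvent strong convergence $A_j^{(k)}\srto A_j$ then yields
$$\alpha_j(A_j^{(k)},\omega)\sotto\alpha_j(A_j,\omega),\qquad j=0,\dots,n.$$
Second, by the uniform boundedness principle the SOT-convergent sequences $\{X_j^{(k)}\}_k$ are bounded in $\cB(H)$; moreover $\sup_k\|\alpha_j(A_j^{(k)},\omega)\|\le\sup_{t_j\in\bR}|\alpha_j(t_j,\omega)|<\infty$ by the spectral theorem. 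Third, the identity $ST^{(k)}-S^{(k)}T=S(T^{(k)}-T)+(S-S^{(k)})T$ shows that the product of two uniformly bounded SOT-convergent sequences is SOT-convergent; an induction over the $2n+1$ factors then delivers $g^{(k)}(\omega)\to g(\omega)$ in $H$.

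To dominate, observe that
$$\|g^{(k)}(\omega)\|\le\|\xi\|\,\Bigl(\prod_{j=1}^n\sup_k\|X_j^{(k)}\|\Bigr)\prod_{i=0}^n\sup_{t_i\in\bR}|\alpha_i(t_i,\omega)|,$$
and the right-hand side is integrable in $\omega$ by the very definition of $\fU_n\supset\fC_n$. Bochner's dominated convergence theorem thus gives $\int_\Omega g^{(k)}\,d\mu\to\int_\Omega g\,d\mu$ in $H$, which is exactly the claimed SOT convergence on an arbitrary vector $\xi$. The principal obstacle is the first step of the pointwise argument, because the $A_j^{(k)}$ and $A_j$ need not be bounded: one must promote resolvent strong convergence to SOT convergence of $\alpha_j(A_j^{(k)},\omega)$ for a general bounded continuous $\alpha_j(\cdot,\omega)$. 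This is handled by first treating $f\in C_0(\bR)$ (for which it follows from the hypothesis via Stone--Weierstrass applied to polynomials in a fixed resolvent $(\mathrm{i}-t)^{-1}$), and then extending to bounded continuous $f$ by multiplying with smooth cutoffs $\chi_R(f(A_j^{(k)}))$, using spectral calculus to control the error off a compact set and the strong convergence of resolvents to absorb it uniformly on the orbit of $\xi$.
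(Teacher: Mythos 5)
The paper does not prove this proposition; it simply cites \cite[Proposition 4.9]{ACDS2009}. So there is no ``paper's own proof'' to compare against, and your job here was effectively to reconstruct the argument from scratch. Your reconstruction is sound and is essentially the standard one (and, as far as I know, the one used in the cited source): identify $T_\phi$ with $\tilde T_\phi$ via the $\fC_n$ integral representation, show pointwise $\sot$-convergence of the integrand using strong resolvent convergence and the uniform-boundedness product rule, and pass to the limit by Bochner dominated convergence with dominating function $\omega\mapsto\prod_i\sup_{t_i}|\alpha_i(t_i,\omega)|$.

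Two small remarks. First, what you flag as the ``principal obstacle'' -- promoting strong resolvent convergence to $\sot$-convergence of $f(A_j^{(k)})$ for a general bounded continuous $f$ -- is not actually an obstacle: it is exactly the content of Reed--Simon, Theorem VIII.20(b), which the paper itself invokes one paragraph later in the proof of Proposition \ref{Lp continuity}. There is no need to re-derive it via cutoffs and Stone--Weierstrass; citing that theorem is cleaner and is what the literature does. Second, a minor slip in the product-rule step: you wrote the identity for $ST^{(k)}-S^{(k)}T$, but what you want to estimate is $S^{(k)}T^{(k)}-ST = S^{(k)}(T^{(k)}-T)+(S^{(k)}-S)T$; the first term is controlled by $\sup_k\|S^{(k)}\|<\infty$ and $T^{(k)}\sotto T$, the second by $S^{(k)}\sotto S$. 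The conclusion you draw (induction over the $2n+1$ factors) is correct regardless.
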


Moreover, we establish the following continuity result with respect to the $L_p(\cM)$ norm.
\begin{prop}\label{Lp continuity}
    For a H{\"o}lder tuple $(p_1,\dots,p_n;p),$ suppose that one of the following assumptions holds:
    \begin{enumerate}
        \item[\rm{(A)}]$F\in C^n(\bR)$ with compact support, $1<p_j,p<\infty,j=1,\dots,n.$
        \item[\rm{(B)}]$F\in \tilde B_{\fz,1}^n(\bR),$ $1\le p_j\le \infty,j=1,\dots,n,1\le p<\infty.$
    \end{enumerate}
    Then for $A_i^{(k)}\eta \cM_{sa},X_j^{(k)}\in L_{p_j}(\cM),k\in \bN$ such that $A_i^{(k)}\srto A_i$ and $X_j^{(k)}\overset{L_p}\to X_j$ when $k\to\infty$ for every $i=0,\dots,n,j=1,\dots,n,$  $$\lf\|T^{A_{0}^{(k)},\dots,A_{n}^{(k)}}_{F^{[n]}}(X_1^{(k)},\dots,X_n^{(k)})-T^{A_0,\dots,A_n}_{F^{[n]}}(X_1,\dots,X_n)\r\|_p\to 0.$$   
\end{prop}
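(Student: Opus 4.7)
The plan is to decompose the difference via the telescoping identity
\begin{align*}
T_{F^{[n]}}^{A_0^{(k)},\dots,A_n^{(k)}}(X_1^{(k)},\dots,X_n^{(k)}) - T_{F^{[n]}}^{A_0,\dots,A_n}(X_1,\dots,X_n) = \mathrm{I}_k + \mathrm{II}_k,
\end{align*}
where $\mathrm{I}_k := T_{F^{[n]}}^{A^{(k)}}(X^{(k)}) - T_{F^{[n]}}^{A^{(k)}}(X)$ varies only the $X_j^{(k)}$ with the operators $A_i^{(k)}$ fixed, and $\mathrm{II}_k := T_{F^{[n]}}^{A^{(k)}}(X) - T_{F^{[n]}}^{A}(X)$ varies only the operators with the $X_j$ fixed. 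Further telescoping $\mathrm{I}_k$ across the $n$ slots by multilinearity and then applying Proposition \ref{Lp boundedness}---the estimate \eqref{Lp bound es} under hypothesis (A), or \eqref{Lp bound es 2} together with Lemma \ref{Besov Cn} under hypothesis (B)---bounds each summand by a constant depending only on $F$ times $\|X_j^{(k)} - X_j\|_{p_j} \prod_{i \neq j} \max(\|X_i^{(k)}\|_{p_i}, \|X_i\|_{p_i})$. The constant is uniform in $k$ because the estimates in Proposition \ref{Lp boundedness} are independent of the operator tuple, and hence $\|\mathrm{I}_k\|_p \to 0$.

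For the term $\mathrm{II}_k$, I would first reduce hypothesis (A) to hypothesis (B). Given $F \in C^n(\bR)$ with compact support, mollification produces a family $\{F_\varepsilon\}_{\varepsilon>0} \subset C_c^\infty(\bR) \subset \tilde{B}_{\infty,1}^n(\bR)$ with $\|F - F_\varepsilon\|_{C^n} \to 0$ as $\varepsilon \to 0$. The bound \eqref{Lp bound es} yields
\begin{align*}
\bigl\| T^{A^{(k)}}_{F^{[n]}}(X) - T^{A^{(k)}}_{F_\varepsilon^{[n]}}(X) \bigr\|_p \lesssim \|F^{(n)} - F_\varepsilon^{(n)}\|_\infty \prod_{j=1}^n \|X_j\|_{p_j}
\end{align*}
uniformly in $k$, together with an identical bound for the unprimed operators $A_i$. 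A standard $3\varepsilon$ argument thus reduces the analysis of $\mathrm{II}_k$ to the case $F \in \tilde{B}_{\infty,1}^n(\bR)$.

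Under hypothesis (B), I would invoke the explicit representation of $F^{[n]}$ supplied by Lemma \ref{Besov Cn}, which gives
\begin{align*}
T_{F^{[n]}}^{A_0,\dots,A_n}(X_1,\dots,X_n) = \int_\Omega \alpha_0(A_0,\omega) X_1 \, e^{\ri \omega_1 A_1} X_2 \cdots X_n \, e^{\ri \omega_n A_n} \, d\mu(\omega),
\end{align*}
and analogously for $A^{(k)}$. For each fixed $\omega \in \Omega = (\bR_+)^n$, the resolvent convergence $A_i^{(k)} \srto A_i$ yields $e^{\ri \omega_j A_j^{(k)}} \sotto e^{\ri \omega_j A_j}$ by Stone's theorem, and $\alpha_0(A_0^{(k)},\omega) \sotto \alpha_0(A_0,\omega)$ by continuous functional calculus, since $\alpha_0(\cdot,\omega)$ is bounded and continuous. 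Because $1 \le p < \infty$, the H\"older identity $1/p = \sum 1/p_j$ forces at least one $p_j$ to be finite, which allows me to telescope the integrand factor by factor and apply the standard lemma that uniformly bounded SOT-convergent nets act norm-continuously on each $L_q(\cM)$ with $q < \infty$. This upgrades the pointwise SOT convergence of the integrand to pointwise $L_p$-norm convergence. The integrable majorant $\prod_j \sup_{t_j} |\alpha_j(t_j, \omega)|$ furnished by the representation \eqref{BG de} then legitimates a dominated convergence step in $\mu$, producing the desired $L_p$-convergence of the full integral.

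The principal obstacle is precisely the promotion of pointwise SOT convergence of the integrand to $L_p$-convergence before integrating in $\mu$. This relies on the semifinite noncommutative integration fact that multiplication by a uniformly bounded SOT-convergent net is continuous on $L_q(\cM)$ for $q < \infty$. The restriction $p < \infty$ in the hypothesis is exactly what guarantees that at least one slot of the H\"older telescope carries a finite exponent, and this explains why the endpoint $p = \infty$ must be excluded---in parallel with the well-known failure of Lipschitz-type estimates at the endpoints noted in the introduction.
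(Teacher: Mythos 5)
Your proposal follows essentially the same route as the paper's proof: both invoke the integral representation of $F^{[n]}$ furnished by Lemma \ref{Besov Cn}, push strong-resolvent convergence through the bounded continuous functional calculus, upgrade the pointwise-in-$\omega$ SOT convergence of the integrand to $L_p$-norm convergence via a uniform-boundedness lemma (the paper's Lemma \ref{SOT to Lp} is precisely the ``standard lemma'' you invoke), pass to the limit by dominated convergence, and reduce case (A) to case (B) by mollifying $F$ and combining a $3\varepsilon$ argument with the uniform bound of Proposition \ref{Lp boundedness}. The only organizational difference is that you split the difference into $\mathrm{I}_k$ (moving the $X_j$'s outside the integral, using uniform multilinearity) and $\mathrm{II}_k$ (moving the $A_i$'s inside the representation), whereas the paper handles both variations at once inside the integral via Lemma \ref{SOT to Lp}; your split is a little cleaner for general $n$ but is not a genuinely different argument.
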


\begin{remark}
   With more restrictions about the symbol $F$ and the operators $A_i^{(k)},A_i,$ this assertion was also proved in \cite[Proposition 4.19]{ST2019} and \cite[Lemma 2.5]{PS2024} on the Schatten classes. 
\end{remark}

To prove Proposition \ref{Lp continuity}, we first show the following lemma.
\begin{lemma}\label{SOT to Lp}
   If $U,V\in \cM,\{U_n\}_n,\{V_n\}_n\subset \cM$ such that $U_n\sotto U,V_n\sotto V$ when $n\to\infty,$ then for any $(x_n)_n\subset L_p(\cM),1\le p<\infty,$ such that $x_n\overset{L_p}\to x,$ we have $$\lim_{n\to \fz}\|U_nx_nV_n-UxV\|_p= 0.$$
\end{lemma}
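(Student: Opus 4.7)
The plan is to telescope:
\begin{equation*}
    U_n x_n V_n - UxV = U_n(x_n - x)V_n + (U_n - U)xV_n + Ux(V_n - V),
\end{equation*}
bound each piece, and add up. The SOT-convergence of $\{U_n\}$ and $\{V_n\}$ together with the uniform boundedness principle yields $\sup_n\|U_n\|_\infty, \sup_n\|V_n\|_\infty < \infty$, so H\"older's inequality on $L_p(\cM)$ controls the first piece: $\|U_n(x_n-x)V_n\|_p \lesssim \|x_n - x\|_p \to 0$ directly from the $L_p$-convergence of $x_n$.

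The remaining two pieces both reduce to a single one-sided claim: \emph{if $(W_n)\subset\cM$ satisfies $\sup_n\|W_n\|_\infty<\infty$ and $W_n\sotto 0$, then $\|W_n y\|_p\to 0$ for every $y\in L_p(\cM)$ and every $1\le p<\infty$.} Granted this, the middle piece is bounded by $\|(U_n-U)x\|_p\|V_n\|_\infty\to 0$, and the last piece satisfies $\|Ux(V_n-V)\|_p=\|(V_n-V)^*x^*U^*\|_p$ and is handled by the symmetric right-multiplication version, obtained upon taking adjoints and applying the left-multiplication claim to $W_n=V_n^*-V^*$ and $y=x^*U^*\in L_p(\cM)$.

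The auxiliary claim is proved by density plus interpolation. Since $\tau$ is semifinite, $\cM\cap L_1(\cM)$ is dense in $L_p(\cM)$ for $p<\infty$, so for $\varepsilon>0$ I pick $y_0\in\cM\cap L_1(\cM)$ with $\|y-y_0\|_p<\varepsilon$; then $y_0\in L_r(\cM)$ for every $r\in[1,\infty]$. The SOT hypothesis yields $\|W_n y_0\|_2\to 0$ (interpreting the action in the standard form of $\cM$), while uniform $\cM$-boundedness gives $\|W_n y_0\|_r\le C\|y_0\|_r$ for $r\in\{1,\infty\}$. Interpolating between $L_2$ and $L_\infty$ when $p\ge 2$, or between $L_1$ and $L_2$ when $p<2$, gives $\|W_n y_0\|_p\to 0$; combining this with the approximation error via a standard $\varepsilon/3$ argument finishes the claim. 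The main technical point is this interpolation step, but it is routine once $L_2$-convergence and the uniform $L_r$-bound are in hand; everything else is triangle-inequality bookkeeping.
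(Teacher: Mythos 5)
Your telescoping decomposition
\begin{equation*}
U_n x_n V_n - UxV = U_n(x_n - x)V_n + (U_n - U)xV_n + Ux(V_n - V)
\end{equation*}
and the reduction to the one-sided claim ``bounded $W_n\sotto 0$ implies $\|W_ny\|_p\to 0$'' take a genuinely different route from the paper's proof, which reduces to $x_n\equiv x$, asserts $U=V=0$, and then works directly on elements of $\tau$-finite support via H\"older and $L_2$--$L_\infty$ interpolation. A small issue first: your interpolation between $L_1$ and $L_2$ has exponent $\theta=2(1-1/p)$ on the $L_2$ factor, which vanishes at $p=1$, so at that endpoint you only recover boundedness, not decay. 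This is easily repaired as the paper does, by picking $y_0$ with $\tau$-finite left support $l(y_0)\in L_2(\cM)$ and factoring $W_ny_0=(W_n\,l(y_0))\,y_0$, so that $\|W_ny_0\|_1\le\|W_n\,l(y_0)\|_2\,\|y_0\|_2\to 0$.

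The serious gap is the adjoint step. You reduce $\|Ux(V_n-V)\|_p=\|(V_n^*-V^*)x^*U^*\|_p$ to the one-sided claim applied to $W_n=V_n^*-V^*$, but that requires $V_n^*\sotto V^*$, and the adjoint is not SOT-continuous: $V_n\sotto V$ does not imply $V_n^*\sotto V^*$. In fact the lemma as stated is false without an extra hypothesis. Take $\cM=\cB(\ell_2)$ with the standard trace, $U_n=U=I$, $V=0$, $V_n=(S^*)^n$ for the unilateral shift $S$, and $x$ the rank-one projection onto $e_1$; then $V_n\sotto 0$, yet $xV_n$ is the rank-one partial isometry $\xi\mapsto\langle\xi,e_{n+1}\rangle e_1$, so $\|U_nxV_n-UxV\|_p=1$ for every $p$. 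To be fair, the paper's own proof hits the same wall: the assertion that $\lim_n\|xV_n\|_2=0$ ``by the normality of $\tau$'' is, via $\|xV_n\|_2=\|V_n^*x^*\|_2$ in the standard form, exactly the statement $V_n^*\sotto 0$, and normality of $\tau$ alone does not supply it. The result is nevertheless used safely in Proposition~\ref{Lp continuity} because there $V_n=\alpha_i(A_i^{(k)},\omega)$ are normal, and for a uniformly bounded sequence of normal operators SOT convergence does imply SOT* convergence (weak convergence of adjoints plus $\|V_n^*\xi\|=\|V_n\xi\|\to\|V\xi\|=\|V^*\xi\|$). So your proof, like the paper's, needs the additional hypothesis $V_n^*\sotto V^*$ (or normality of the $V_n$) for the adjoint step to go through, and as written it cannot stand on the hypotheses stated in the lemma.
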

\begin{proof}
  It suffices to consider the case $x_n\equiv x$ due to the inequality $$\|U_nx_nV_n-UxV\|_p\lesssim \|x_n-x\|_p+\|U_nxV_n-UxV\|_p.$$ We may assume that $U,V=0.$ When $p=2,$ $\lim_{n\to \infty}\|xV_n\|_2=0$ by the normality of $\tau.$ Thus by the uniform boundedness of $\{U_n\}_n,$ we also have $\lim_{n\to \infty}\|U_nxV_n\|_p=0.$ Set $$M:=\sup_{n}\,(\|U_n\|_\fz\,\|V_n\|_\fz)+\sup_n\|U_n\|_\fz.$$
    Now we first consider the case when $x$ has $\tau$-finite support. For $2<p<\infty,$ \begin{align*}
        \lim_{n\to\infty}\|U_nxV_n\|_p&\le \lim_{n\to\infty}\|U_nxV_n\|_2^{1-\frac 2p}\|U_nxV_n\|_{\infty}^{\frac 2p}\\
        &\le (M\|x\|_\fz)^{1-\frac 2p}\lim_{n\to\infty}\|U_nxV_n\|_2^{1-\frac 2p}\\
        &=0.
    \end{align*} 
    For $1\le p<2,$ suppose $\frac 1p=\frac 1q+\frac 12$ for some $q,$ then
    \begin{align*}
        \lim_{n\to\infty}\|U_nxV_n\|_p\le \lim_{n\to\infty}M\|l(x)\|_q\|xV_n\|_2=0,
    \end{align*}
 where $l(x)$ denotes the left support of $x.$ Hence we obtain the conclusion for operators $x$ with $\tau$-finite support. 
 
 Now we extend this result to every $x\in L_p(\cM).$     For every $\varepsilon>0,$ choose $y\in L_p(\cM)$ with $\tau$-finite support such that $\|y-x\|_p<\varepsilon,$ then 
    \begin{align*}
         \lim_{n\to\infty}\|U_nxV_n\|_p&\le  \lim_{n\to\infty}\|U_nyV_n\|_p+ \lim_{n\to\infty}\|U_n(x-y)V_n\|_p\\
         &\le M\varepsilon.
    \end{align*}
    Let $\varepsilon \searrow 0,$ then we have $\lim_{n\to\infty}\|U_nxV_n\|_p=0.$
\end{proof}

\begin{proof}[Proof of Proposition \ref{Lp continuity}]
    We only consider the case $n=1$ since $n\ge 2$ is similar. We first assume that \rm{(B)} holds.
    Due to Lemma \ref{Besov Cn}, $F^{[1]}$ admits the representation
$$F^{[1]}(t_0,t_1)=\int_{\Omega}\alpha_0(t_0,\omega)\alpha_1(t_1,\omega)\,d\mu(\omega), \ \ \ t_0,t_1\in \mathbb{R},$$ where $\alpha_0,\alpha_1$ are bounded, continuous and satisfy 
\begin{align}\label{Cn condition}
\int_{\Omega}\sup_{t_0\in \bR}|\alpha_0(t_0,\omega)|\sup_{t_1\in \bR}|\alpha_1(t_1,\omega)|\,d\mu(\omega)<\infty.
\end{align}
By \cite[Theorem \rm{VIII.20 (b)}]{RS1972}, $\alpha_i(A_i^{(k)},\omega)\sotto \alpha_i(A_i,\omega)$ for $i=0,1$. Then by Lemma \ref{SOT to Lp}, we have $$\lim_{k\to \infty}\|\alpha_0(A_0^{(k)},\omega)X_1^{(k)}\alpha_1(A_1^{(k)},\omega)- \alpha_0(A_0,\omega)X_1\alpha_1(A_1,\omega)\|_p=0.$$
Now using the Lebesgue dominated theorem for Bochner integrals \cite[Corollary \rm{III}.6.16]{DS}, we deduce from \eqref{eq Moi def} and \eqref{Cn condition} that
    $$\lim_{k\to \infty}\|T^{A_{0}^{(k)},A_{1}^{(k)}}_{F^{[1]}}(X_1^{(k)})-T^{A_0,A_1}_{F^{[1]}}(X_1)\|_p=0.$$ 

Now we assume that \rm{(A)} holds, in which case there exists $F_N\in C_c^\fz(\bR)\subset \tilde B_{\infty,1}^n(\bR)$ such that $\lim_{N\to\fz}\|F_N'-F'\|_\fz=0.$ Then from Proposition \ref{Lp boundedness}, we have
$$\lim_{N\to\infty}\sup_{k\in\bN}\|T_{F_N^{[1]}}^{A_0^{(k)},A_1^{(k)}}(X_1^{(k)})- T_{F^{[1]}}^{A_0^{(k)},A_1^{(k)}}(X_1^{(k)})\|_p\lesssim_p\sup_{k\in\bN}\|X_1^{(k)}\|_p\lim_{N\to\infty}\|F_N'-F'\|_\infty=0$$ and $$\lim_{N\to\infty}\|T_{F_N^{[1]}}^{A_0,A_1}(X_1)- T_{F^{[1]}}^{A_0,A_1}(X_1)\|_p\lesssim_p\|X_1\|_p\lim_{N\to\infty}\|F_N'-F'\|_\infty=0,$$ which yields the conclusion. 
% Then combined Definition \ref{def of Moi} with Proposition \ref{Lp boundedness}, it suffices to show that for every $m\in \bN^+,$ we have 
% $$\lf\|S_{{F^{[n]}},m}^{A_{0}^{(k)},\dots,A_{n}^{(k)}}(X_1,\dots,X_n)-S_{{F^{[n]}},m}^{A_0,\dots,A_n}(X_1,\dots,X_n)\r\|_p\to 0.$$
% We first assume that $F^{[n]}$ has compact support. Utilizing \cite[Theorem \rm{VIII.24 (b)}]{RS1972}, let $E_{j,m}^{l,k}$ denote the spectral projection of $A_j^{(k)}$ with respect to the interval $[\frac lm,\frac{l+1}m),$ then we have $E_{j,m}^{l,k}\sotto E_{j,m}^{l}.$ Therefore, since $S_{\phi,m}^{A_0,\dots,A_n}$ is a finite series now, we obtain the conclusion by Lemma \ref{SOT to Lp}. Now we will remove the compact support assumption by considering the truncated functions: $\phi_N(x_0,\dots,x_n)=F^{[n]}(x_0,\dots,x_n),N\in \bN^+$ when $\max(|x_0|,\dots,|x_n|)\le N$ and $\phi_N(x_0,\dots,x_n)=0$ otherwise. According to Proposition \ref{Lp boundedness},

    %by Definition \ref{def of Moi}, tracial property and induction, we just need to show that if $X_1^{(k)}\to X_1$ in the weak topology on $L_{p_1}(\cM)$ and $X_j\in L_{p_j}(\cM),j=2,\dots,n,$ then $$X_1^{(k)}\cdot \prod_{j=2}^nX_j\wto \prod_{j=1}^nX_j$$ when $k\to\infty$. Indeed, by H{\"o}lder inequality, $\prod_{j=2}^nX_j\cdot Y\in L_{p_1'}(\cM)$ for every $Y\in L_{p'}(\cM),$ hence $$\lim_{k\to\infty}\tau\lf(X_1^{(k)}\cdot \prod_{j=2}^nX_j\cdot Y\r)= \tau\lf(\prod_{j=1}^nX_j\cdot Y\r)$$ for every $Y\in L_{p'}(\cM),$ which implies the conclusion.
\end{proof}

% \begin{remark}
%     When $H$ is not separable, the above conclusions hold for .
% \end{remark}

\subsection{Perturbation formula of multiple operator integrals}
In this subsection, we will introduce the perturbation formula, which plays a central role in the study of Fr{\'e}chet differentiability and nonlinear estimates of operator functions (cf. e.g \cite{ACDS2009,PSTZ2019}). For double operator integrals, the perturbation formula refers to the well-known L{\"o}wner identity:
$$F(X)-F(Y)=T_{F^{[1]}}^{X,Y}(X-Y),$$ which was established in \cite{Birman-Solomyak-III} for the Schatten class and \cite{ACDS2009,Peller1985} for the noncommutative $L_p$ space. It also has a $n$-variable generalization:
\begin{equation}\label{Perturbation formula}
     \begin{aligned}
    &\quad \   T_{F^{[n]}}^{A_1,\dots,A_{i-1},A,A_{i},\dots,A_{n}}(X_1,\dots,X_{n})
    -T_{F^{[n]}}^{A_1,\dots,A_{i-1},B,A_{i},\dots,A_{n}}(X_1,\dots,X_{n})\\
    &=T_{F^{[n+1]}}^{A_1,\dots,A_{i-1},A,B,A_{i},\dots,A_{n}}(X_1,\dots,X_{i-1},A-B,X_i,\dots,X_{n}).
\end{aligned}
\end{equation}
 This identity was proved in \cite{PSTZ2019} for the noncommutative $L_p$ space with $1<p<\infty,$ in \cite{LMS2020} for the Schatten class $\cS_p$ with $1\le p<\infty$ and in \cite{Peller2006} for $\cB(H)$. In the following proposition, we extend the result for the noncommutative $L_p$ space in \cite[Theorem 28]{PSTZ2019} in full generality.

\begin{prop}\label{prop: perturbation formula}
Suppose that one of the following assumptions holds:
\begin{enumerate}
        \item[\rm{(A)}]$F\in C^{n+1}(\bR)$ with $F^{(n)},F^{(n+1)}$ being bounded, $1<p_j<\infty,j=0,\dots,n$ with $0<\sum_{j=0}^n\frac{1}{p_j}< 1,$ 
        \item[\rm{(B)}] $F\in \tilde{B}_{\infty,1}^{n}(\bR)\cap \tilde{B}_{\infty,1}^{n+1}(\bR),$ $1\le p_j\le \infty,j=0,\dots,n$ with $0\le \sum_{j=0}^n\frac{1}{p_j}\le 1.$ 
    \end{enumerate}
    Then for $A_i\eta \cM_{sa},A,B\in L_{p_0}(\cM)$ self-adjoint and $X_i\in L_{p_j}(\cM),i=1,\dots,n,$ \eqref{Perturbation formula} holds.
\end{prop}

\begin{proof}
The first part follows exactly as the proof of \cite[Theorem 28]{PSTZ2019}. We omit the details. We therefore focus on the second case. We may assume that $i=1$ and other cases follow from the enumeration of variables.  

\textbf{Case 1.} If $p_0=\infty,$ without loss of generality, we may assume that there is an integer $0\le m\le n$ such that $p_j=\infty,j=0,\dots,m$ and $p_j<\infty, j=m+1,\dots,n.$ Suppose first that $X_1,\dots,X_n,A,B\in \cS_{\cm},$ then we obtain \eqref{Perturbation formula} through \cite[Theorem 28]{PSTZ2019} since $\cS_{\cm}\subset L_q(\cM)$ for every $n+1<q<\infty.$ Recall that $\cS_{\cm}$ is dense in $\cM$ with respect to the $\sot$. Hence Proposition \ref{SOT continuity} allows us to pass to the $\sot$-limit in \eqref{Perturbation formula}, leading to \eqref{Perturbation formula} for $X_1,\dots,X_m,A,B\in \cM,X_{m+1},\dots X_{n}\in \cS_{\cm}.$ 
 
 On the other hand, again recall that $\cS_{\cm}$ is dense in $L_{p_j}(\cM)$ for $1\le p_j<\infty,j=m+1,\dots,n.$ Take $X_j^{(k)}\in \cS_{\cm},k\in\bN$ such that $X_j^{(k)}\overset{L_{p_j}}\to X_j$ when $k\to\infty,$ then by Proposition \ref{Lp boundedness}, the left side of \eqref{Perturbation formula} $$T_{F^{[n]}}^{A,A_1,\dots,A_{n}}(X_1,\dots,X_{m},X_{m+1}^{(k)},\dots,X_n^{(k)})
    -T_{F^{[n]}}^{B,A_1,\dots,A_{n}}(X_1,\dots,X_{m},X_{m+1}^{(k)},\dots,X_n^{(k)})$$
    converges to $$T_{F^{[n]}}^{A,A_1,\dots,A_{n}}(X_1,\dots,X_{n})
    -T_{F^{[n]}}^{B,A_1,\dots,A_{n}}(X_1,\dots,X_{n})$$ in  $L_r(\cM)$ norm, where $0<\sum_{j=m+1}^n\frac{1}{p_j}=\frac 1r\le 1,$ and then converges in measure topology. Similarly, the right side of \eqref{Perturbation formula} $$T_{F^{[n+1]}}^{A,B,A_1,\dots,A_{n}}(A-B,X_1,\dots,X_{m},X_{m+1}^{(k)},\dots,X_n^{(k)})$$ converges to $$T_{F^{[n+1]}}^{A,B,A_1,\dots,A_{n}}(A-B,X_1,\dots,X_n)$$ in measure topology. Hence we conclude the proof for this case.

   \textbf{Case 2.} If $p_0<\infty,$ we first assume that $A,B\in \cL_{p_0}(\cM).$ This will be attributed to the previous case $p_0=\infty,$ leading to \eqref{Perturbation formula} for $A,B\in \cL_{p_0}(\cM)$ and $X_{j}\in L_{p_j}(\cM),j=1,\dots,n.$ For any $A,B\in L_{p_0}(\cM),$ let $A_n=AE_A[-n,n],B_n=BE_B[-n,n],$ then by Lemma \ref{SOT to Lp}, $A_n,B_n\in \cL_{p_0}(\cM)$ and $A_n\srto A,B_n\overset{L_{p_0}}\to B$ when $n\to\infty.$ Moreover, by \cite[Lemma 3.3]{CMSS2019}, $A_n\srto A,B_n\srto B.$  Hence by Proposition \ref{Lp continuity}, we conclude the proof for this case.
\end{proof}

When $n=0,$ \eqref{Perturbation formula} reduces to the L{\"o}wner identity $$F(X)-F(Y)=T_{F^{[1]}}^{X,Y}(X-Y).$$ Then by Proposition \ref{Lp boundedness}, we obtain the following straightforward corollary, which is well-known to the experts.
\begin{cor}\label{Lipes}
    Assume that $X,Y\in L_p(\cM).$ If $F\in \mathrm{Lip}(\bR),$ then for  $1<p<\infty$ we have \begin{align}\label{Lipes1}
        \|F(X)-F(Y)\|_p\lesssim_p \|F\|_{\mathrm{Lip}}\|X-Y\|_p.
    \end{align}
    If $F\in \tilde{B}_{\fz,1}^1(\bR),$ then  for $1\le p\le \infty$ we have \begin{align}\label{Lipes2}
        \|F(X)-F(Y)\|_p\lesssim_p \|F\|_{\tilde{B}_{\fz,1}^1}\|X-Y\|_p.
    \end{align}
\end{cor}

\section{Quantum chain rule}\label{s3.5}
In this section, we establish the following chain rule for $T_F$ on quantum Besov spaces, which serves as the key ingredient in the proof of our main results. 
\begin{theorem}\label{nc chain rule}
     Let $1\le p, q\le \infty,s>\max\lf(\frac dp,1\r),s\notin \bN$ and $\beta\in \bN^d$ such that $1\le |\beta|\le [s].$ Set $K:=|\beta|.$ Suppose that $F(0)=0$ and one of the following assumptions holds:
    \begin{enumerate}
        \item[\rm{(A)}]$F\in_{\mathrm{loc}} C^K(\bR),1<p<\infty.$
        \item[\rm{(B)}]$F\in_{\mathrm{loc}} \tilde B_{\fz,1}^K(\bR),1\le p\le \infty.$
    \end{enumerate}    Then for $u\in B_{p,q}^s(\rd)$ self-adjoint, we have $\partial^{\beta}F(u)\in L_p(\rd)$ and 
    \begin{equation}\label{chain rule formula} 
    \begin{aligned}
\partial^{\beta}F(u)=\sum_{\ell=1}^K\sum_{{\begin{subarray}{c}\alpha_1+\dots+\alpha_{\ell }=\beta \\ \alpha_1,\dots,\alpha_\ell\in \bN^d\setminus \{0\} \end{subarray}}}\frac{\beta!}{\alpha_1!\cdots \alpha_\ell!}T_{F^{[\ell]}}^{u,\dots,u}(\partial^{\alpha_1}u,\dots,\partial^{\alpha_{\ell}}u),
    \end{aligned}
    \end{equation} 
where for $\beta=(\beta^1,\dots,\beta^d)\in\bN^d,$ set $\beta!:=\beta^1!\cdots \beta^d!.$ 
\end{theorem}

% \begin{theorem}\label{others chain rule}
%     For $X\in \cB(H)$ anti-symmetric and bounded, $0\le K<\infty,$ if $F\in C^K(\bR)$ and $F(0)=0,$ then for $u\in \cB(H)$ we have
%     \begin{align*} 
%         (\ad_X)^KF(u)&=T_{F^{[1]}}^{u,u}\lf((\ad_X)^Ku\r)+\sum_{n_1+n_2=n}c(2,n_1,n_2)T_{F^{[2]}}^{u,u,u}\lf((\ad_X)^{n_1}u,(\ad_X)^{n_2}u\r)+\dots \\
%         &+\sum_{n_1+\dots+n_{K}=K}c(K,n_1,\dots,n_{K})T_{F^{[K ]}}^{u,\dots,u}\lf((\ad_X)^{n_1}u,\dots,(\ad_X)^{n_{K}}u\r),
%     \end{align*}
%    where $n_1,\dots,n_K\in \bN^+$ and $$\ad_X:\cB(H)\to \cB(H),Y\mapsto XY-YX.$$
% \end{theorem}
In fact, various forms of chain rule in noncommutative settings have been extensively studied, which can be found in \cite{BS1999,Rota1980} for instance. For example, from \cite[Theorem 3.1]{BS1999} and \cite[Theorem 5.7]{ACDS2009}, for any inner derivative $\cd$ on $\cB(H),u\in \cB(H)$ self-adjoint and $F\in W_{n+1}(\bR),$ 
\begin{align*}
    \cd^nF(u)&=\sum_{r=1}^n\sum_{\substack{m_1k_1+\dots+m_rk_r=n\\ m_1+\dots+m_r=r}}c(r,m_1,\dots,m_r,k_1,\dots,k_{r})T_{F^{[r]}}^{u,\dots,u}((\cD^{m_1}u)^{k_1},\dots,(\cD^{m_r}u)^{k_r}).
\end{align*}
Our results have some novel aspects compared to their results. Explicitly, we require less regularity of $F$ and establish the chain rule for the quantum derivation $\partial_j:x\in \cB(L_2(\rd))\mapsto [\mathbf D_j,x],$ which can not be covered by their results since $\mathbf D_j\notin \cB(L_2(\rd)).$

In order to prove Theorem \ref{nc chain rule}, we first show the following lemmas. 
\begin{lemma}\label{homomorphism acting}
Let $\pi$ be a linear map given by $\pi:x\mapsto u^*xu$ for some unitary operator $u\in \cM.$ For a H{\"o}lder tuple $(p_1,\dots,p_n;p),$ suppose that one of the following assumptions holds:
    \begin{enumerate}
        \item[\rm{(A)}]$F\in C^n(\bR)$ with compact support, $1<p_j,p<\infty,j=1,\dots,n$.
        \item[\rm{(B)}]$F\in \tilde B_{\fz,1}^n(\bR),1\le p_j,p\le \infty,j=1,\dots,n.$
    \end{enumerate}   
    Then for $A_i \eta \cM_{sa},i=0,\dots,n$ and $X_j \in L_{p_j}(\cM),j=1,\dots,n,$ we have
    \begin{align*}
        \pi(T_{\phi}^{A_0,\dots,A_n}(X_1,\dots,X_n))=T_{\phi}^{\pi(A_0),\dots,\pi(A_n)}(\pi(X_1),\dots,\pi(X_n)).
    \end{align*}
    In particular, for every $X\in L_p(\cM),$ we have $\pi(F(X))=F(\pi(X)).$
\end{lemma}
\begin{proof}
    We first assume that $A_i \in \cM,i=0,\dots,n$ are self-adjoint and $X_j \in \cM,j=1,\dots,n.$ By the continuity of $\pi$ and Definition \ref{def of Moi}, we just need to show that for every $m\in\bN,$
    $$\pi\lf(S_{\phi,m}^{A_0,\dots,A_n}(X_1,\dots,X_n)\r)=S_{\phi,m}^{\pi(A_0),\dots,\pi(A_n)}\lf(\pi(X_1),\dots,\pi(X_n)\r).$$ 

    Indeed, note that $\pi$ is an endomorphism on $\cM,$ then by the expression of $S_{\phi,m}^{A_0,\dots,A_n},$ it suffices to show that for any $A\in \cM_{sa},$ we have $\pi(E_A)=E_{\pi(A)}$ for every $E_A=\chi_B(A)$ with $B=[\frac{l}{m},\frac{l+1}{m})$ for some $l\in \bZ.$ Note that we can find $f_n\in C(\bR)$ such that $0\le f_n\le 1$ and $f_n\to \chi_B$ pointwise. Hence $f_n(A)\sotto E_A$. Since $$\|f_n(A)\|\le\sup_{t\in \bR} f_n(t)\le1, \ \ \ n\in\bN^+,$$ $f_n(A)$ and $E_A$ are contained in the unit ball of $\cM$. It is well-known that the ultra-strong operator topology is equivalent to the $\sot$ on the unit ball of $\cM,$ then $f_n(A)\to E_A$ also in the ultra-strong operator topology. Due to \cite[Proposition 1.5.3]{Dixmier1977} and \cite[Theorem 2.4.23]{Bratteli-Robinson}, $$f_n(\pi(A))=\pi(f_n(A))\to \pi(E_A)$$ in the ultra-strong operator topology, and thus in the $\sot$. Moreover, $$f_n(\pi(A))\sotto \chi_B(\pi(A))=E_{\pi(A)},$$  which implies that $\pi(E_A)=E_{\pi(A)}.$

    Now for the general case, like in the proof of Proposition \ref{prop: perturbation formula}, we may choose  $A_i^{(k)}=A_iE_{A_i}[-k,k]\in \cM$ self-adjoint and $X_j^{(k)}=E_{X_i}[-k,k]\in \cL_{p_j}(\cM),k\in \bN$ such that $A_i^{(k)}\srto A_i$ and $X_j^{(k)}\overset{L_p}\to X_j$ when $k\to\infty$ for every $i=0,\dots,n,j=1,\dots,n.$ Clearly, by the properties of $\pi,$ we have $\pi(X_j^{(k)})\overset{L_p}\to \pi(X_j)$ and for any $z\in\bC\setminus\bR,$ $$(z-\pi(A_i^{(k)}))^{-1}=u(z-A_i^{(k)})^{-1}u^*\sotto u(z-A_i)^{-1}u^*=(z-\pi(A_i))^{-1},$$ which implies that $\pi(A_i^{(k)})\srto A_i$ when $k\to\infty.$ Meanwhile, for every $k\in\bN$ we have
    \begin{align*}
    \pi(T_{\phi}^{A_0^{(k)},\dots,A_n^{(k)}}(X_1^{(k)},\dots,X_n^{(k)}))\to T_{\phi}^{\pi(A_0^{(k)}),\dots,\pi(A_n^{(k)})}(\pi(X_1^{(k)}),\dots,\pi(X_n^{(k)})).
    \end{align*}
    Let $k\to\infty$ and by Proposition \ref{Lp continuity}, we deduce that.
\begin{align*}
        \pi(T_{\phi}^{A_0,\dots,A_n}(X_1,\dots,X_n))=T_{\phi}^{\pi(A_0),\dots,\pi(A_n)}(\pi(X_1),\dots,\pi(X_n)).
    \end{align*}
    
    Finally, note that  $\pi(F(X))=F(\pi(X))$ for every $X\in \cL_p(\cM).$ Hence by Corollary \ref{Lipes} and the continuity of $\pi,$ we similarly deduce that $\pi(F(X))=F(\pi(X))$ for every $X\in L_p(\cM).$
\end{proof}

\begin{lemma}\label{index pick}
    Let $u,s,p,q$ be as in Theorem \ref{nc chain rule}. For every $\beta\in \bN^d$  such that $1\le K:=|\beta|\le [s],\ell=1,\dots,K$ and $\alpha_1',\dots,\alpha_\ell'\in \bN^d\setminus \{0\}$ such that $\sum_{k=1}^\ell\alpha_k'=\beta,$ we can select a suitable H{\"o}lder tuple $(p_0,\dots,p_\ell;p)$ associated with $(\alpha_1',\dots,\alpha_\ell')$ with $1\le p_k\le \infty,k=0,\dots,\ell,$ satisfying $$\|u\|_{p_0} \ \ \text{and} \ \ \|u\|_{p_k}+\|\partial^{\alpha_k'}u\|_{p_k}\lesssim_{s,p,q} \|u\|_{B_{p,q}^s},\quad k=1,\dots,\ell.$$ Moreover, when $p\ne \infty,$ the exponents $p_k,k=0,\dots,\ell$ can be chosen such that $1<p_k<\infty.$ 
\end{lemma}
\begin{proof}
  If $p=\infty,$ we simply set $p_k:=\infty,k=0,\dots,\ell.$ Otherwise set \begin{align}\label{p0,...,pk}
      p_0:=\frac{Kp}{\delta},\ \ \ p_k:=\frac{Kp}{|\alpha_k'|-\delta_k},\ \ \ k  =1,\dots,\ell,
  \end{align} where $0\le\delta_k<|\alpha_k'|$ is to be chosen and $\delta:=\sum_{k=1}^\ell\delta_k.$ Next set
  $$s_k:=|\alpha_k'|+\varepsilon_k+(\frac dp-\frac{d}{p_k}),\ \ \ s_0:=\frac dp-\frac{d}{p_0},$$ where $\varepsilon_k>0$ depends on the choice of $\delta_k.$ We will prove that by choosing $\delta_k$ and $\varepsilon_k,$ the exponents $p_k$ can be arranged to satisfy all the requirements of this lemma. 

  First we observe that $$s_k=|\alpha_k'|+\varepsilon_k+\frac{(K-(|\alpha_k'|-\delta_k))d}{Kp}.$$
 Moreover, we claim that $$|\alpha_k'|+\frac{(s-|\alpha_k'|)Kp}{d}-K>0.$$ To see this, note if $K<\frac dp,$ then by $s>\frac dp$ we have $$|\alpha_k'|+\frac{(s-|\alpha_k'|)Kp}{d}-K>K(\frac{sp}{d}-1)>0;$$ if $K\ge \frac dp,$ then $$|\alpha_k'|+\frac{(s-|\alpha_k'|)Kp}{d}-K\ge \frac{(s-K)KP}{d}\ge \frac{\left \{ s \right \}Kp}{d}>0.$$ 
 Hence by choosing $\delta_k$ such that \begin{align}\label{delta k condition}
     \frac{Kp}{d}\varepsilon_k+\delta_k<\min\lf(|\alpha_k'|+\frac{(s-|\alpha_k'|)Kp}{d}-K,|\alpha_k'|\r),
 \end{align}  we obtain $s_k< s.$ 

  Now we estimate $\left \| \partial^{\alpha_k'}u \right \|_{p_k}$ and $\left \| u \right \|_{p_0}.$ By Proposition \ref{besov_embedding} \rm{(i)}, we have $$\|u\|_{p_k}+\| \partial^{\alpha_k'}u \|_{p_k} \le \left \| u \right \|_{W_{p_k}^{|\alpha_k'|}}\lesssim_{\varepsilon,p_k,q}\left \| u \right \|_{B_{p_k,q}^{|\alpha_k'|+\varepsilon}}$$ for any $\varepsilon>0.$ Choosing $\varepsilon=\varepsilon_k$  
 and by Proposition \ref{besov_embedding} \rm{(iii)}, $$\|u\|_{p_k}+\| \partial^{\alpha_k'}u \|_{p_k}\lesssim_{s,p,q} \left \| u \right \|_{B_{p_k,q}^{|\alpha_k'|+\varepsilon_k}}\lesssim_{s,p,q} \left \| u \right \|_{B_{p,q}^{s_k}}\le \left \| u \right \|_{B_{p,q}^{s}}.$$ 
{Here and below, the associated bounds depend only on $s,p,q$, see Remark \ref{independence}.}
 Similarly, by Proposition \ref{besov_embedding} \rm{(iii)}, $$\left \| u \right \|_{p_0}\lesssim_{s,p,q} \left \| u \right \|_{B_{p_0,q}^{s-s_0}}\lesssim_{s,p,q} \|u\|_{B_{p,q}^s}.$$ 
  
  Finally, note that the exponents $p_0,\dots,p_\ell$ defined in \eqref{p0,...,pk} satisfy that $1<p_k<\infty,k=0,\dots,\ell$ since $0\le \delta_k<|\alpha_k'|.$ Therefore,  $p_k,k=0,\dots,\ell$ satisfy all the requirements.
\end{proof}

\begin{remark}\label{independence}
    Here we explain why we can find $\kappa(s,p,q)>0$ such that
$$\|u\|_{p_k}+\| \partial^{\alpha_k'}u \|_{p_k}\le \kappa(s,p,q) \left \| u \right \|_{B_{p_k,q}^{|\alpha_k'|+\varepsilon_k}}.$$
Indeed, from Proposition \ref{besov_embedding} we would have
$$\|u\|_{p_k}+\| \partial^{\alpha_k'}u \|_{p_k}\le C(s,\alpha_k',\varepsilon_k,p,p_k,q) \left \| u \right \|_{B_{p_k,q}^{|\alpha_k'|+\varepsilon_k}}.$$
However, note that for a fixed tuple $(s,K,\alpha_k',p)$ appearing in the assumptions of Lemma \ref{index pick}, we can find and fix a pair $(\varepsilon_k,\delta_k)$ satisfying \eqref{delta k condition}, which means that one may write $$\varepsilon_k:=\varepsilon_k(s,p,K,\alpha_k'), \ \ \ \delta_k:=\delta_k(s,p,K,\alpha_k')$$ as functions of $s,p,K,\alpha_k'$. From \eqref{p0,...,pk}, $p_k$ is also a function of $s,p,K,\alpha_k',$ hence one may rewrite $ C(s,\alpha_k',\varepsilon_k,p,p_k,q)$ as $C'(s,K,\alpha_k',p,q).$ Finally, taking
$$\kappa(s,p,q)=\max_{K=1,\dots,[s]}\,\max_{|\alpha_k'|=1,\dots,K}C'(s,K,\alpha_k',p,q),$$ we conclude the proof.
\end{remark}

\begin{remark}\label{p0...pk3}
   From of Lemma \ref{index pick}, if we take 
   $$p_k=\frac{Kp}{|\alpha_k'|}, \ \ \ k=1,\dots,\ell,$$ then arguing as in Remark \ref{independence} we have
   \begin{align*}
\|u\|_{p_k}+\|\partial^{\alpha_k'}u\|_{p_k}\lesssim_{s,p,q} \|u\|_{B_{p,q}^s}, \ \ \ k=1,\dots,\ell.
   \end{align*}
Indeed, by \eqref{delta k condition}, it suffices to take $\delta_k=0$ and let
 \begin{align}\label{p0...pk4}
      \frac{Kp}{d}\varepsilon_k<\min\lf(|\alpha_k'|+\frac{(s-|\alpha_k'|)Kp}{d}-K,|\alpha_k'|\r).
 \end{align} 
\end{remark}

\begin{proof}[Proof of Theorem \ref{nc chain rule}]
Fix $u\in B_{p,q}^s(\rd)$ with $s>\frac dp$. By Proposition \ref{besov_embedding}, we have $u\in L_{\infty}(\rd).$  Therefore, for a given $F$ satisfying {\rm (A)} or {\rm (B)} in the assumptions, we may assume that $F\in X(\bR)$ with $\supp(F)\subset [-\|u\|_{\infty},\|u\|_{\infty}]$ and with norm $\|F\|_{X}^{\|u\|_\fz,\,\loc},$  where $X=C^K(\bR)$ when $1<p<\infty$ and $X=\tilde B_{\infty,1}^K(\bR)$ when $p=1,\infty.$ Then we can apply Proposition \ref{Lp continuity} and Lemma \ref{homomorphism acting} below. 
    For $p\ne \infty,$ we proceed by induction on $K.$ For $K=0,$ taking $X=u,Y=0$ in \eqref{Lipes1} implies $$\|F(u)\|_{p}\lesssim_p \|u\|_{p}.$$ For $K=1,$ we may assume that $\partial^{\beta}=\partial_1.$ Recall that the translation operator $T_{ze_1},z\in\bR$ is defined via the translation $\tau_{ze_1}$ on $L_2(\bR^d),$ that is $T_{ze_1}(x)=\tau_{ze_1}x\tau_{-ze_1}$ for any $x\in L_\infty(\rd)$. Moreover, note that $T_{ze_1}u \srto u$ by Proposition \ref{tran prop}. Hence we have
    \begin{eqnarray*}
\partial_1F(u)&\overset{\mathrm{Prop}\, \ref{partial prop}}=&\lim_{z \to 0}\frac{T_{ze_1}F(u)-F(u)}{z}\\
    &\overset{\mathrm{Lem}\, \ref{homomorphism acting}}=&\lim_{z \to 0}\frac{F(T_{ze_1}u)-F(u)}{z}\\
    &\overset{\mathrm{Prop}\, \ref{prop: perturbation formula}}=&\lim_{z \to 0}T^{T_{ze_1}u,u}_{F^{[1]}}\lf(\frac{T_{ze_1}u-u}{z}\r)\\
    &\overset{\mathrm{Prop}\, \ref{Lp continuity}}=&T^{u,u}_{F^{[1]}}(\partial_1u),
\end{eqnarray*}
where the limit exists in $L_p(\rd)$ norm. Moreover, this identity together with Proposition \ref{besov_embedding} \rm{(i)} and Proposition \ref{Lp boundedness} lead to $\|\partial_1F(u)\|_p\lesssim_p \|\partial_1u\|_p\lesssim_{s,p,q} \|u\|_{B_{p,q}^s}.$

Assume now that the assertion holds for $K-1,$ we then show it for $K.$ We may assume that $\partial^{\beta}=\partial_1\partial^{\alpha},$ where $\alpha\in \bN^d$ with $|\alpha|=K-1.$ Then by the same reason as $K=1,$ the induction hypothesis implies that
    \begin{eqnarray*}
\partial^{\beta}F(u)&\overset{\mathrm{Prop}\, \ref{partial prop}}=&\lim_{z \to 0}\frac{T_{ze_1}\partial^{\alpha}F(u)-\partial^{\alpha}F(u)}{z}\\
    &\overset{\mathrm{Lem}\, \ref{homomorphism acting}}=&\lim_{z\to 0}\sum_{j=1}^{K-1}\sum_{{\begin{subarray}{c}\gamma_1+\dots+\gamma_{j}=\alpha \\ \gamma_1,\dots,\gamma_j\in \bN^d\setminus \{0\} \end{subarray}}}\frac{1}{z}\frac{\alpha!}{\gamma_1!\cdots \gamma_j!}T_{F^{[j]}}^{T_{ze_1}u,\dots,T_{ze_1}u}(T_{ze_1}\partial^{\gamma_1}u,\dots,T_{ze_1}\partial^{\gamma_{j}}u)\\
    &&-\lim_{z\to 0}\sum_{j=1}^{K-1}\sum_{{\begin{subarray}{c}\gamma_1+\dots+\gamma_{j}=\alpha \\ \gamma_1,\dots,\gamma_j\in \bN^d\setminus \{0\} \end{subarray}}}\frac{1}{z}\frac{\alpha!}{\gamma_1!\cdots \gamma_j!}T_{F^{[j]}}^{u,\dots,u}(\partial^{\gamma_1}u,\dots,\partial^{\gamma_{j}}u).
\end{eqnarray*}
with the limit again taken in $L_p(\rd)$ norm. It therefore suffices to analyze terms of the form $$T_{F^{[j]}}^{T_{ze_1}u,\dots,T_{ze_1}u}(\partial^{\gamma_1}T_{ze_1}u,\dots,\partial^{\gamma_j}T_{ze_1}u)
    -T_{F^{[j]}}^{u,\dots,u}(\partial^{\gamma_1}u,\dots,\partial^{\gamma_j}u)$$ for $j=1,\dots,K-1$ and $\sum_{k=1}^j|\gamma_k|=K-1.$

    For this purpose, we define that: for any $t\in\bN^+,m=1,\dots,t$ and two vectors $a=(a_1,\dots,a_t), b=(b_1,\dots,b_t),$ define the transform
\begin{align}\label{Tq}
    \mathsf T_m(a,b)=(a_1,\dots,a_m,b_{m+1},\dots,b_t)
\end{align}
and adopt the convention that
$\mathsf T_0(a,b)=a.$
 Moreover, we simply write that
    \begin{align*}
&\vec{u}=(u,\dots,u)\\
&T_{ze_1}\vec{u}=(T_{ze_1}u,\dots,T_{ze_1}u)\\
&\vec{v}=(\partial^{\gamma_1}u,\dots,\partial^{\gamma_j}u)\\
&T_{ze_1}\vec{v}=(T_{ze_1}\partial^{\gamma_1}u,\dots,T_{ze_1}\partial^{\gamma_j}u).
    \end{align*}
 Now we first observe that
 \begin{equation}\label{sum and minus}
     \begin{aligned}
        &\ \ \ \ \ \ \frac{1}{z}\lf(T_{F^{[j]}}^{T_{ze_1}u,\dots,T_{ze_1}u}(\partial^{\gamma_1}T_{ze_1}u,\dots,\partial^{\gamma_j}T_{ze_1}u)
    -T_{F^{[j]}}^{u,\dots,u}(\partial^{\gamma_1}u,\dots,\partial^{\gamma_j}u)\r)\\
    &=\frac 1z\lf(\sum_{r=1}^{j+1}\lf(T_{F^{[j]}}^{\mathsf T_r(T_{ze_1}\vec{u},\vec{u})}(T_{ze_1}\vec{u})-T_{F^{[j]}}^{\mathsf T_{r-1}(T_{ze_1}\vec{u},\vec{u})}(T_{ze_1}\vec{v})\r)+\sum_{r=1}^j\lf(T_{F^{[j]}}^{\vec{u}}(\mathsf T_r(T_{ze_1}\vec{v},\vec{v}))-T_{F^{[j]}}^{\vec{u}}(\mathsf T_{r-1}(T_{ze_1}\vec{v},\vec{v}))\r)\r).
    \end{aligned}
 \end{equation}
    Applying Lemma \ref{index pick} for $(\alpha_1',\dots,\alpha_j')=(\gamma_1,\dots,\gamma_j),$ one may choose 
$(p_0,\dots,p_j)$ defined in \eqref{p0,...,pk}
 such that $\partial^{\gamma_k}u\in L_{p_k}(\rd),k=1,\dots,j$ and $u\in L_{p_0}(\rd).$
Hence applying Proposition \ref{prop: perturbation formula} with $A=T_{ze_1}u$ and $B=u,$ we then have 
\begin{align}\label{diff1}
    \frac 1z\lf(T_{F^{[j]}}^{\mathsf T_r(T_{ze_1}\vec{u},\vec{u})}(T_{ze_1}\vec{u})-T_{F^{[j]}}^{\mathsf T_{r-1}(T_{ze_1}\vec{u},\vec{u})}(T_{ze_1}\vec{v})\r)=T_{F^{[j+1]}}^{a^r}(b^r).
\end{align}
By direct calculation, we also have
\begin{align}\label{diff2}
    \frac 1z\lf(T_{F^{[j]}}^{\vec{u}}(\mathsf T_r(T_{ze_1}\vec{v},\vec{v}))-T_{F^{[j]}}^{\vec{u}}(\mathsf T_{r-1}(T_{ze_1}\vec{v},\vec{v}))\r)=T_{F^{[j]}}^{u,\dots,u}(c^r),
\end{align}
which implies
\begin{equation}\label{difference equality}
        \begin{aligned}
           \frac{1}{z}\lf(T_{F^{[j]}}^{T_{ze_1}u,\dots,T_{ze_1}u}(\partial^{\gamma_1}T_{ze_1}u,\dots,\partial^{\gamma_j}T_{ze_1}u)
    -T_{F^{[j]}}^{u,\dots,u}(\partial^{\gamma_1}u,\dots,\partial^{\gamma_j}u)\r)
=\sum_{r=1}^{j+1}T_{F^{[j+1]}}^{a^r}(b^r)+\sum_{r=1}^{j}T_{F^{[j]}}^{u,\dots,u}(c^r),
        \end{aligned}
    \end{equation}
where we recall that $\partial_1^z:=\frac{T_{ze_1}-1}{z}$ and
\begin{align*}
    &a^r=(\underset{{r} }{\underbrace{T_{ze_1}u,\dots,T_{ze_1}u}},\underset{{j-r+2} }{\underbrace{u,\dots,u}} )\\
    &b^r=(T_{ze_1}\partial^{\gamma_1}u,\dots,T_{ze_1}\partial^{\gamma_{r-1}}u,\partial_1^zu,T_{ze_1}\partial^{\gamma_{r}} u,\dots,T_{ze_1}\partial^{\gamma_j}u)\\
    &c^r=(T_{ze_1}\partial^{\gamma_1}u,\dots,T_{ze_1}\partial^{\gamma_{r-1}}u,\partial_1^z\partial^{\gamma_{r}} u,\partial^{\gamma_{r+1}}u,\dots,\partial^{\gamma_j}u).
\end{align*}

Now we start with \eqref{difference equality}. When $z\to 0,$ we will show that the right side converges to $$\sum_{r=1}^{j+1}T_{F^{[j+1]}}^{u,\dots,u}(\xi^r)+\sum_{r=1}^jT_{F^{[j]}}^{u,\dots,u}(\eta^r)$$ in $L_p(\rd)$ norm, where 
\begin{align*}
    &\xi^r=(\partial^{\gamma_1}u,\dots,\partial^{\gamma_{r-1}}u,\partial_1 u,\partial^{\gamma_{r}}u,\dots,\partial^{\gamma_j}u)\\
    &\eta^r=(\partial^{\gamma_1}u,\dots,\partial^{\gamma_{r-1}}u,\partial_1\partial^{\gamma_{r}} u,\partial^{\gamma_{r+1}}u,\dots,\partial^{\gamma_j}u).
\end{align*}
Indeed, let $(\alpha_1',\dots,\alpha_{j+1}')=(\gamma_1,\dots,\gamma_{r-1},e_1,\gamma_r,\dots,\gamma_j),$ then we have $\sum_{k=1}^{j+1}|\alpha_k'|=\sum_{k=1}^j|\gamma_k|+1=K.$
Hence we define $(p_1,\dots,p_{j+1})$ as in \eqref{p0...pk4} and apply Remark \ref{p0...pk3} for $(\alpha_1',\dots,\alpha_{j+1}')$ to obtain that $u\in W_{p_k}^{|\alpha_k|}(\rd),k=1,\dots,j+1.$ In particular, we have $u\in W_{p_r}^1(\rd),$ which implies that $\lim_{z\to 0}\|\partial_1^zu-\partial_1u\|_{p}=0$ by Proposition \ref{partial prop}.
Recall that by Proposition \ref{tran prop}, $T_{ze_1}u\sotto u$ when $z\to 0$ since $u\in  L_\infty(\rd)$ and $\lim_{z\to 0}\|T_{ze_1}\partial^{\alpha_k}u-\partial^{\alpha_k}u\|_{p_k}=0.$
Hence by Proposition \ref{Lp continuity}, we deduce that
\begin{align*}
    T_{F^{[j+1]}}^{a^r}(b^r)\to T_{F^{[j+1]}}^{u,\dots,u}(\xi^r)
\end{align*}
in $L_p(\rd)$ norm. Similarly we have
\begin{align*}
    T_{F^{[j]}}^{u,\dots,u}(c^r)\to T_{F^{[j]}}^{u,\dots,u}(\eta^r)
\end{align*}
in $L_p(\rd)$ norm.
Hence 
\begin{align*}
    \sum_{r=1}^{j+1}T_{F^{[j+1]}}^{a^r}(b^r)+\sum_{r=1}^{j}T_{F^{[j]}}^{u,\dots,u}(c^r)\to\sum_{r=1}^{j+1}T_{F^{[j+1]}}^{u,\dots,u}(\xi^r)+\sum_{r=1}^jT_{F^{[j]}}^{u,\dots,u}(\eta^r)
\end{align*}
in $L_p(\rd)$ norm.
Taking sum over $j=1,\dots,K$ yields the desired formula \eqref{chain rule formula}.  
 
 Finally starting with \eqref{chain rule formula}, we apply Remark \ref{p0...pk3} for $(\alpha_1',\dots,\alpha_\ell')=(\alpha_1,\dots,\alpha_\ell)$  
 and define $(p_1,\dots,p_\ell)$ as in \eqref{p0...pk4}. Then Proposition \ref{Lp boundedness} yields
\begin{align*}   
\lf\|T_{F^{[\ell]}}^{u,\dots,u}(\partial^{\alpha_1}u,\dots,\partial^{\alpha_\ell}u)\r\|_p&\lesssim_{s,p,F,\|u\|_\infty} \prod_{k=1}^\ell\|\partial^{\alpha_k}u\|_{p_k}\\
&\lesssim_{s,p,q} 1+\|u\|_{B_{p,q}^s}^{K}<\infty.
\end{align*}
We then conclude the desired estimate by taking sum over $\ell=1,\dotsm, K$. 

For $p=\infty,$ we can similarly deduce \eqref{chain rule formula} and $\partial^{\beta}F(u)\in L_{\infty}(\rd)$ by utilizing Proposition \ref{Lp boundedness} and Proposition \ref{SOT continuity}, in place of Proposition \ref{Lp continuity}. We should point out that, in the case $p=\infty,$ some detailed difference will arise, for instance, all the limits appearing in the proof should exist in $\sot$; but this 
  will not affect the outline of proof.

%For $p=1,$ we can take $0<t<s$ such that $t$ is sufficiently close to $s.$ Hence denote $r=\frac{d}{d-s+t}$ and by Proposition \ref{besov_embedding} \rm{(iii)} we have $B_{p,q}^s(\rd)\subset B_{r,q}^t(\rd).$ Since $1<r<\infty,$ we attribute this to the case $1<p<\infty$ and obtain $\partial^{\beta}F(u)$ exists in distributional sense and satisfies \eqref{chain rule formula} for $u\in B_{r,q}^t(\rd),$ then for $u\in B_{p,q}^s(\rd).$ Finally by Proposition \ref{Lp boundedness} we obtain $\partial^{\beta}F(u)\in \cL_1(\rd).$
\end{proof}

% \begin{remark}
%     Our results can be extended to a more general framework. More precisely, let $\partial$ be a derivation (a linear map satisfying the Leibniz rule) on a von Neumann algebra $\cM,$ generated by a $C_0$ semigroup $\{T_h\}_{h\ge 0}$ on $\cM$ satisfying the assumptions in Lemma \ref{homomorphism acting}. Then the same argument yields the chain rule for $\partial$. 
% However, for a general derivation, our proof is not valid. It is therefore natural to ask whether every derivation on a von Neumann algebra admits an associated chain rule and, if so, under what additional hypotheses. Up to now, we are not aware of a complete answer to this question.
% \end{remark}

\section{Proof of the main results}\label{s4}
In this section, we will prove our main results, i.e. Theorems \ref{s smaller than 1}, \ref{main re 1} and \ref{main re 4}.
Our proof relies mainly on the difference characterization of quantum Besov spaces in Section \ref{s2.5}, the theories of multiple operator integrals in Section \ref{s3}, the quantum chain rule in Section \ref{s3.5} and nonlinear interpolation introduced by Tartar \cite{Tartar}. Like in the proof of Theorem \ref{nc chain rule}, if $F\in_{\mathrm{loc}} X(\bR)$ for $X(\bR)$ appearing in the main theorems and $u\in L_\infty(\rd),$ then we may assume that $F\in X(\bR)$ with norm $\|F\|_{X}^{\|u\|_\fz,\,\loc}$.

\begin{proof}[Proof of Theorem \ref{s smaller than 1}]
   If the assumption \rm{(B)} or \rm{(D)} holds, then for every $0<|h|\le t,$ by \eqref{Lipes1} in Corollary \ref{Lipes} and Lemma \ref{homomorphism acting} we have $$\|\Delta_h^1F(u)\|_p=\|F(T_hu)-F(u)\|_p\lesssim_p \|F\|_{X}\|\Delta_h^1u\|_p$$ and $$\|F(u)\|_p\lesssim_p \|F\|_{X} \|u\|_p,$$ where $X=\rm{Lip}$ when assumption \rm{(B)} holds and $X=\tilde B_{\infty,1}^1$ when assumption \rm{(D)} holds. Hence applying \eqref{s notin N} in Theorem \ref{equiv ch}, we have
    $$\|F(u)\|_{B_{p,q}^s}\lesssim_p \|F\|_{X}\lf(\left \| u \right \|_p+\lf(\int_0^{\infty}\lf(t^{-s}\omega_p^1(t,u)\r)^q \frac{dt}{t}\r)^{1/q}\r)\lesssim_{s,p,q}  \|F\|_{X}\|u\|_{B_{p,q}^s}.$$

    If the assumption \rm{(A)} or \rm{(C)} holds, we similarly deduce the conclusion with the constant $\|F\|_{X}^{\|u\|_\fz,\,\loc}$.

% In the case that the assumption \rm{(C)} or \rm{(D)} holds, applying \eqref{Lipes2} in place of \eqref{Lipes1} yields the same conclusion.
\end{proof}

The proof of Theorem \ref{main re 1} will be divided into two cases: $s\notin \bN$ and $s\in \bN.$ For $s\notin \bN,$
we begin with the following lemma, which is stronger than Lemma \ref{index pick}. 
\begin{lemma}\label{index 2}
    Let $u,s,p,q,\beta$ be as in Theorem \ref{main re 1} with $s\notin \bN$ and  $i=1,\dots,d.$ For any $\ell=1,\dots,[s],$ $\alpha_0'\in\bN,\alpha_k'\in\bN^+,k=1,\dots,\ell$ and $\sum_{k=0}^\ell\alpha_k'=[s],$ we can select a suitable H{\"o}lder tuple $(p_0,\dots,p_\ell;p)$ associated with $(\alpha_0',\dots,\alpha_\ell')$ with $1\le p_k\le \infty,k=0,\dots,\ell,$ satisfying $$\|u\|_{p_k}+\|\partial_i^{\alpha_k'}u\|_{p_k}\lesssim_{s,p,q} \|u\|_{B_{p,q}^s},\ \ \ k=1,\dots,\ell$$ and \begin{align}\label{difference estimate}
\|u\|_{p_0}+\lf(\int_0^{\infty}\lf(t^{-\left \{ s \right \} }\omega_{p_0}^1(t,\partial_i^{\alpha_0'} u)\r)^q \frac{dt}{t}\r)^{1/q}\sim_{s,p,q} \|\partial_i^{\alpha_0'}u\|_{B_{p_0,q}^{\{s\}}} \lesssim_{s,p,q} \|u\|_{B_{p,q}^{s}}.
    \end{align}
Moreover, when $p\ne \infty,$ the exponents $p_k,k=0,\dots,\ell$ can be chosen such that $1<p_k<\infty.$ 
\end{lemma}
\begin{proof}
When $p=\infty,$ set $p_k:=\infty$. Otherwise for any $0<\delta_k<\alpha_k',k=1,\dots,\ell,$ set $\delta:=\sum_{k=1}^\ell \delta_k$ and \begin{align}\label{p0,..pk2}
    p_0:=\frac{[s]p}{\alpha_0'+\delta}, \ \ \ p_k:=\frac{[s]p}{\alpha_k'-\delta_k}, \ \ \ k=1,\dots,\ell.
\end{align}  For $p=\infty,$ the assertions are easy and we omit the proof. For $p\ne\infty,$
    % Replace $K$ with $[s]$ in the proof of Lemma \ref{index pick} and take $\varepsilon_k$ sufficiently small in \eqref{how to choose}, then we can choose $\delta_k$ such that $\|\partial_i^{\alpha_k}u\|_p\lesssim \|u\|_{B_{p,q}^s},i=1,\dots,d.$ 
   it suffices to show the existence of  $\delta_k,k=1,\dots,\ell$ such that the following inequalities hold:
   \begin{align}\label{delta2}
   \delta_k<\min\lf(\alpha_k'+\frac{(s-\alpha_k')[s]p}{d}-[s],\alpha_k'\r)
\end{align}
and
   \begin{align}\label{delta1}
       \delta>\max\lf(\lf(1-\frac{([s]-\alpha_0')p}{d}\r)[s]-\alpha_0',0\r)
   \end{align}
    % Then we prove that there exist $\delta_1,\dots,\delta_\ell$ such that $\|\partial_i^{\alpha_k}u\|_{p_k}\lesssim \|u\|_{B_{p,q}^s},i=1,\dots,d.$ Replacing $K$ with $[s]$ and taking  $\varepsilon_k$  sufficiently small in the proof of Lemma \ref{index pick}, we just need to choose $\delta_k$ such that $$\delta_k<\min\lf(\alpha_k+\frac{(s-\alpha_k)[s]p}{d}-[s],\alpha_k\r),$$ which implies that 
 Indeed, 
 if \eqref{delta2} holds, replacing $K$ with $[s]$ in \eqref{delta k condition}, and taking  $\varepsilon_k$  sufficiently small, we then deduce similarly that \begin{align}\label{alphak'}
     s_k:=\alpha_k'+\varepsilon_k+\frac dp-\frac d{p_k}<s, \ \ \ k=1,\dots,\ell.
 \end{align} 
 Arguing as in Remark \ref{independence}, we deduce the estimate $$\|u\|_{p_k}+\|\partial_i^{\alpha_k'}u\|_{p_k}\lesssim_{s,p,q}\|u\|_{B_{p,q}^s}$$ by Proposition \ref{besov_embedding} \rm{(i)}, \rm{(iii)}. 
 Similarly, if \eqref{delta1} holds, then we can deduce that \begin{align}\label{alpha0}
     s_0:=\alpha_0'+\{s\}+\frac dp-\frac d{p_0}<s,
 \end{align} which implies that
 $$\|\partial_i^{\alpha_0'}u\|_{B_{p_0,q}^{\{s\}}} \lesssim_{s,p,q}\|u\|_{B_{p_0,q}^{\{s\}+\alpha_0'}}\lesssim_{s,p,q} \|u\|_{B_{p,q}^{s}}$$
 together with Proposition \ref{besov_embedding} \rm{(iii)} and Proposition \ref{Besov regularity}. 

 Now we show that the choice of such $\delta_k$'s is possible. First, note that \eqref{delta2} implies
 \begin{align}\label{sup}
     \delta:=\sum_{k=1}^\ell\delta_k<\min\lf([s]-\alpha_0'+\frac{(\ell s-[s]+\alpha_0')p[s]}{d}-\ell [s],[s]-\alpha_0'\r).
 \end{align}
 Second, since $\alpha_1',\dots,\alpha_\ell'>0,$ we have $\alpha_0'<[s]$, then it is straightforward to verify that
 \begin{equation}\label{supinf}
     \begin{aligned}
    \min\lf([s]-\alpha_0'+\frac{(\ell s-[s]+\alpha_0')p[s]}{d}-\ell [s],[s]-\alpha_0'\r)&\ge \min\lf(\frac{\{s\}[s]p}d-\lf(1-\frac{p[s]}{d}\r)\alpha_0',[s]-\alpha_0'\r)\\
 &>\max\lf(\lf(1-\frac{([s]-\alpha_0')p}{d}\r)[s]-\alpha_0',0\r).
\end{aligned}
 \end{equation}
 Therefore, it is possible to choose $\delta_k$'s properly such that  \eqref{delta1} and \eqref{sup}  hold simultaneously, and so do \eqref{delta2} and \eqref{delta1}.
% Therefore, by choosing $$\delta_k<\min\lf(\alpha_k+\frac{(s-\alpha_k)[s]p}{d}-[s],\alpha_k\r)$$ such that $$\max\lf(\lf(1-\frac{([s]-\alpha_0)p}{d}\r)[s]-\alpha_0,0\r)<\delta<\min\lf([s]-\alpha_0+\frac{(\ell s-[s]+\alpha_0)p[s]}{d}-\ell [s],[s]\r),$$ we conclude the proof.
\end{proof}

% \begin{remark}\label{omit}
%     The $p_0,\dots,p_\ell$ chosen in \eqref{p0,..pk2} with $\alpha_0=0$ satisfy the requirement in Lemma \ref{index pick} automatically, since the difference estimate \eqref{difference estimate} implies that $\|u\|_{p_0}<\infty.$ Hence when we use the Proposition \ref{prop: perturbation formula} in the following paragraph, we will omit the process for choosing $p_0,\dots,p_\ell$.
% \end{remark}

Note that Lemma \ref{index 2} requires that there is at most one index in $(\alpha_0',\dots,\alpha_\ell')$ that is equal to $0.$ We also obtain the following variant of Lemma \ref{index 2}, where we allow the existence of two indices that may be equal to $0$.
\begin{lemma}\label{index 3}
    With the same assumptions of Lemma \ref{index 2} and making a convention that $\alpha_{-1}'=0,$ we can select a suitable H{\"o}lder tuple $(p_{-1},\dots,p_\ell;p)$ associated with $(\alpha_0',\dots,\alpha_\ell')$ with $1\le p_k\le \infty,k=-1,\dots,\ell,$ satisfying $$\|u\|_{p_k}+\|\partial_i^{\alpha_k'}u\|_{p_k}\lesssim_{s,p,q} \|u\|_{B_{p,q}^s}, \ \ \ k=-1,1,\dots,\ell$$ and \eqref{difference estimate}.
Moreover, when $p\ne \infty,$ the exponents $p_k,k=-1,\dots,\ell$ can be chosen such that $1<p_k<\infty.$ 
\end{lemma}
\begin{proof}
When $p=\infty,$ set $p_k:=\infty$. Otherwise for any $0<\delta_k<\alpha_k',k=1,\dots,\ell$ and any $\delta_{-1},\delta_0>0$ such that $\delta_{0}+\delta_{-1}=\sum_{k=1}^\ell \delta_k,$ set $\delta:=\delta_0+\delta_{-1}$ and \begin{align}\label{p0,..pk5}
   p_{-1}:=\frac{[s]p}{\delta_{-1}}, \ \ \ p_0:=\frac{[s]p}{\alpha_0'+\delta_0},\ \ \ p_k:=\frac{[s]p}{\alpha_k'-\delta_k}, \ \ \ k=1,\dots,\ell.
\end{align}  For $p=\infty,$ the assertions are easy and we omit the proof.
 For $p\ne\infty,$ since we have $\delta=\sum_{k=1}^\ell\delta_k,$ as the proof of Lemma \ref{index 2}, there exists $\delta_k,k=1,\dots,\ell$ satisfying  \eqref{delta2} such that \eqref{delta1} holds, which implies the estimate $$\|u\|_{p_k}+\|\partial_i^{\alpha_k'}u\|_{p_k}\lesssim_{s,p,q}\|u\|_{B_{p,q}^s}, \ \ \ k=1,\dots,\ell.$$ Moreover, recall that \eqref{delta2} implies 
 \eqref{sup}. Then we can choose $\delta_{-1},\delta_0$ such that $0<\delta_{-1}<[s]$ sufficiently small and $\delta_0$ satisfying \eqref{delta1} simultaneously,  which implies \eqref{difference estimate} arguing as in Lemma \ref{index 2} and $$\|u\|_{p_{-1}}\lesssim_{s,p,q}\|u\|_{B_{p,q}^s}$$ by Proposition \ref{besov_embedding} \rm{(iii)}.  
\end{proof}

\begin{proof}[Proof of Theorem \ref{main re 1} when $s\notin\bN$]
The case $0<s<1$ is covered by Theorem \ref{s smaller than 1}.
    When $s>1,$ 
    by \eqref{s notin N}, we have
    \begin{align}\label{want}
        \|F(u)\|_{B_{p,q}^s}\sim_{s,p,q} \left \| F(u) \right \|_p+\sum_{j=1}^d\lf(\int_0^{\infty}\lf(t^{-\left \{ s \right \} }\omega_p^1(t,\partial_i^{[s]}F(u))\r)^q \frac{dt}{t}\r)^{1/q}.
    \end{align}
    By substituting \eqref{chain rule formula} into \eqref{want} and by the triangle inequality, it suffices to analyze terms like \begin{align}\label{analyze terms}
        \lf(\int_0^{\infty}\lf(t^{-\left \{ s \right \} }\sup_{|h|\le t}\left \|\Delta_h^1 T_{F^{[\ell]}}^{u,\dots,u}(\partial_i^{\alpha_1}u,\dots,\partial_i^{\alpha_\ell}u) \right \|_p\r)^q \frac{dt}{t}\r)^{1/q}, \ \ \ \ell=1,\dots,[s],\ \ \ i=1,\dots,d
    \end{align}
    with $\alpha_1,\dots,\alpha_\ell\in \bN^+$ such that $\sum_{k=1}^\ell \alpha_k=[s].$
    Similar to the proof of \eqref{difference equality}, by Proposition \ref{homomorphism acting}, Lemma \ref{index pick} and Proposition \ref{prop: perturbation formula}, we have
 \begin{align}\label{diff uv}
     \Delta_h^1 T_{F^{[\ell]}}^{u,\dots,u}(\partial_i^{\alpha_1}u,\dots,\partial_i^{\alpha_\ell}u)=\sum_{r=1}^{\ell+1}T_{F^{[\ell+1]}}^{d^r(u)}(e^r(u))+\sum_{r=1}^{\ell}T_{F^{[\ell]}}^{u,\dots,u}(f^r(u)),
 \end{align}
where 
\begin{equation}\label{def}
    \begin{aligned}
    &d^r(u)=(\underset{{r} }{\underbrace{T_{h}u,\dots,T_{h}u}},\underset{{\ell-r+2} }{\underbrace{u,\dots,u}} )\\
&e^r(u)=(T_h\partial_i^{\alpha_1}u,\dots,T_h\partial_i^{\alpha_{r-1}}u,\Delta_h^1u,\partial_i^{\alpha_{r}} u,\dots,\partial_i^{\alpha_\ell}u)\\
    &f^r(u)=(T_h\partial_i^{\alpha_1}u,\dots,T_h\partial_i^{\alpha_{r-1}}u,\partial_i^{\alpha_r}\Delta_h^1 u,\partial_i^{\alpha_{r+1}}u,\dots,\partial_i^{\alpha_\ell}u).
\end{aligned}
\end{equation}
 By the triangle inequality,
 \begin{equation}\label{analyze0}
     \begin{aligned}
         \eqref{analyze terms}&\le\sum_{r=1}^{\ell+1} \lf(\int_0^{\infty}\lf(t^{-\left \{ s \right \} }\sup_{|h|\le t}\left \|T_{F^{[\ell+1]}}^{d^r(u)}(e^r(u)) \right \|_p\r)^q \frac{dt}{t}\r)^{1/q}\\
         &\quad+\sum_{r=1}^{\ell}\lf(\int_0^{\infty}\lf(t^{-\left \{ s \right \} }\sup_{|h|\le t}\left \|T_{F^{[\ell]}}^{u,\dots,u}(f^r(u)) \right \|_p\r)^q \frac{dt}{t}\r)^{1/q}\\
&=:\sum_{r=1}^{\ell+1}{\rm I}_r^\ell+\sum_{r=1}^\ell {\rm J}_r^{\ell}.
     \end{aligned}
 \end{equation}
% For convenience, for an operator integral $T_{F^{[n]}}^{A_0,\dots,A_n},$ we simply write it by $T_{F^{[n]}}$ since $A_0,\dots,A_n$ will not affect the boundedness of $T_{F^{[n]}}^{A_0,\dots,A_n}.$ 
We first consider the terms ${\rm I}_r^\ell.$   
   By applying Lemma \ref{index 2} to $(\alpha_0',\dots,\alpha_\ell')=(0,\alpha_1,\dots,\alpha_\ell)$ and arguing as in Remark \ref{independence}, one may find $p_k,k=0,\dots,\ell$ as defined in \eqref{p0,..pk2} so that Proposition \ref{Lp boundedness} and \eqref{Lp bound es 3} imply that 
 \begin{align}\label{Ir}
     {\rm I}_r^\ell \lesssim_{s,p,q}\|F\|_X^{\|u\|_\fz,\,\loc}\|u\|_{B_{p_0,q}^{\{s\}}}\prod_{k=1}^\ell\|\partial_i^{\alpha_k}u\|_{p_k}
     \lesssim \|F\|_X^{\|u\|_\fz,\,\loc}\|u\|_{B_{p,q}^s}^{\ell+1},
 \end{align}
    where $X=C_b^{\lceil s\rceil}$ when $1<p<\infty$ and $X=\tilde{B}_{\infty,1}^1\cap \tilde{B}_{\infty,1}^{\lceil s\rceil}$ when $p=1,\infty.$
Similarly, for ${\rm J}_r^\ell$,  one applies Lemma \ref{index 2} to $(\alpha_0',\dots,\alpha_{\ell-1}')=(\alpha_r,\alpha_1,\dots,\alpha_{r-1},\alpha_{r+1},\dots,\alpha_{\ell})$ and deduces that
\begin{align}\label{Jr}
 {\rm J}_r^\ell\lesssim_{s,p,q}\|F\|_X^{\|u\|_\fz,\,\loc}\|u\|_{B_{p_0,q}^{\alpha_0'+\{s\}}}\prod_{k=1}^{\ell-1}\|\partial_i^{\alpha_k'}u\|_{p_k}\lesssim \|F\|_X^{\|u\|_\fz,\,\loc}\|u\|_{B_{p,q}^s}^{\ell}.
\end{align}
 Hence by taking sums over $r,\ell,$
 we  obtain \eqref{bound es} and \eqref{bound es 1}.
\end{proof}

We also have the following local Lipschitz estimates when $s\notin \bN.$
\begin{prop}\label{main re 2}
    Let $1\le q\le\infty,s>\frac dp,s\notin \bN$ and $u,v\in B_{p,q}^s(\rd)$ be self-adjoint. If $F\in C^{\left \lceil s \right \rceil+1}(\bR),$ then for $1<p<\infty,$  we have \begin{align*}
        \|F(u)-F(v)\|_{B_{p,q}^s}\lesssim_{s,p,q} \|F\|_{C_b^{\lceil s\rceil+1}}^{\max(\|u\|_\fz,\|v\|_\fz),\mathrm{loc}}\lf(1+\max(\|u\|_{B_{p,q}^s},\|v\|_{B_{p,q}^s})^{[s]+1}\r)\|u-v\|_{B_{p,q}^s}.
    \end{align*}
    If $F \in_{\loc} \tilde{B}_{\infty,1}^{1}(\mathbb{R})\cap \tilde{B}_{\infty,1}^{\left \lceil s \right \rceil+1}(\mathbb{R}),$ then for $1\le p\le\infty,$ and $u,v\in B_{p,q}^s(\rd)$ self-adjoint we have \begin{align*}
        \|F(u)-F(v)\|_{B_{p,q}^s}\lesssim_{s,p,q} \|F\|_{\tilde B_{\fz,1}^1\cap \tilde B_{\fz,1}^{\lceil s\rceil+1}}^{\max(\|u\|_\fz,\|v\|_\fz),\mathrm{loc}}\lf(1+\max(\|u\|_{B_{p,q}^s},\|v\|_{B_{p,q}^s})^{[s]+1}\r)\|u-v\|_{B_{p,q}^s}.
    \end{align*}
\end{prop}
\begin{proof}
Since $u,v\in L_\fz(\rd),$ we may assume that $F\in X(\bR)$ with norm $\|F\|_{X}^{\max(\|u\|_\fz,\|v\|_\fz),\,\loc},$ where $X=C_b^{\lceil s\rceil+1}$ when $1<p<\fz$ and $X=\tilde B_{\fz,1}^1\cap \tilde B_{\fz,1}^{\lceil s\rceil+1}$ when $p=1,\fz.$ As in the above proof of Theorem \ref{main re 1} for $s\notin \bN$, we arrive at the step to  estimate the terms similar to that appeared in \eqref{analyze0}: for $\ell=1,\dots,[s],$
    \begin{align*}
        &\ \ \ \lf(\int_0^{\infty}\lf(t^{-\left \{ s \right \} }\sup_{|h|\le t}\left \|\Delta_h^1 T_{F^{[\ell]}}^{u,\dots,u}(\partial_i^{\alpha_1}u,\dots,\partial_i^{\alpha_\ell}u)-\Delta_h^1 T_{F^{[\ell]}}^{v,\dots,v}(\partial_i^{\alpha_1}v,\dots,\partial_i^{\alpha_\ell}v)\right \|_p\r)^q \frac{dt}{t}\r)^{1/q}\\
&\le\sum_{r=1}^{\ell+1}\lf(\int_0^{\infty}\lf(t^{-\left \{ s \right \} }\sup_{|h|\le t}\lf \|\lf(T_{F^{[\ell+1]}}^{d^r(u)}(e^r(u))-T_{F^{[\ell+1]}}^{d^r(v)}(e^r(v))\r)\r\|_p\r)^q \frac{dt}{t}\r)^{1/q}\\
&\quad+\sum_{r=1}^{\ell}\lf(\int_0^{\infty}\lf(t^{-\left \{ s \right \} }\sup_{|h|\le t}\lf \|\lf(T_{F^{[\ell]}}^{u,\dots,u}(f^r(u))-T_{F^{[\ell]}}^{v,\dots,v}(f^r(v))\r)\r\|_p\r)^q \frac{dt}{t}\r)^{1/q}\\
&=:\sum_{r=1}^{\ell+1}{\rm K}_r^\ell+\sum_{r=1}^{\ell}{\rm L}_r^{\ell},
    \end{align*}
where $d^r,e^r,f^r$ is defined as in \eqref{def}.
We only need to consider ${\rm K}_r^\ell$ since ${\rm L}_r^\ell$ can be estimated similarly. 
Let $\mathsf T_m$ be defined as in \eqref{Tq}, then similar to \eqref{sum and minus} we have
\begin{align*}
    T_{F^{[\ell+1]}}^{d^r(u)}(e^r(u))-T_{F^{[\ell+1]}}^{d^r(v)}(e^r(v))&=\sum_{m=1}^{\ell+2}\lf(T_{F^{[\ell+1]}}^{\mathsf T_m(d^r(u),d^r(v))}(e^r(u))-T_{F^{[\ell+1]}}^{\mathsf T_{m-1}(d^r(u),d^r(v))}(e^r(u))\r)\\
&\quad+\sum_{m=1}^{\ell+1}\lf(T_{F^{[\ell+1]}}^{d^r(v)}(\mathsf T_m(e^r(u),e^r(v)))-T_{F^{[\ell+1]}}^{d^r(v)}(\mathsf T_{m-1}(e^r(u),e^r(v)))\r).
\end{align*}
% and
% \begin{align*}
%     T_{F^{[\ell]}}^{u,\dots,u}(f^r(u))-T_{F^{[\ell]}}^{v,\dots,v}(f^r(v))&=\sum_{m=1}^{\ell+1}\lf(T_{F^{[\ell]}}^{\mathsf T_m(u,v)}(f^r(u))-T_{F^{[\ell]}}^{\mathsf T_{q-1}(u,v)}(f^r(u))\r)\\
% &+\sum_{m=1}^{\ell}\lf(T_{F^{[\ell]}}^{v,\dots,v}(\mathsf T_m(f^r(u),f^r(v)))-T_{F^{[\ell]}}^{v,\dots,v}(\mathsf T_{q-1}(f^r(u),f^r(v)))\r).
% \end{align*}
% Moreover, we will omit the notion $T_h$ for operators of form $T_hx$ appearing in the operator integrals for convenience, which will not affect the desired estimate. 
By Remark \ref{depend}, one may write $T_{F^{[n]}}^{A_0,\dots,A_n}$ as $T_{F^{[n]}}$ to simplify the notation.
Then similar to the proof of \eqref{diff1} and \eqref{diff2}, by Lemma \ref{homomorphism acting}, Lemma \ref{index pick} and Proposition \ref{prop: perturbation formula} we have
 \begin{align*}
     T_{F^{[\ell+1]}}^{\mathsf T_m(d^r(u),d^r(v))}(e^r(u))-T_{F^{[\ell+1]}}^{\mathsf T_{m-1}(d^r(u),d^r(v))}(e^r(u))=T_{F^{[\ell+2]}}(g^{m,r}(u,v))=:{\rm I}_{\ell,r,m}
 \end{align*}
 and
 \begin{align*}
     T_{F^{[\ell+1]}}^{d^r(v)}(\mathsf T_m(e^r(u),e^r(v)))-T_{F^{[\ell+1]}}^{d^r(v)}(\mathsf T_{m-1}(e^r(u),e^r(v)))=T_{F^{[\ell+1]}}(h^{m,r}(u,v))=:{\rm II}_{\ell,r,m},
 \end{align*}
 % and
% \begin{align*}
%     T_{F^{[\ell]}}^{\mathsf T_m(u,v)}(f^r(u))-T_{F^{[\ell]}}^{\mathsf T_{q-1}(u,v)}(f^r(u))=T_{F^{[\ell+1]}}(i^q(u,v)):={\rm III}_{\ell-1,r,q}
% \end{align*}
%  and
%  \begin{align*}
%      & \ \ \ \ \ T_{F^{[\ell]}}^{v,\dots,v}(\mathsf T_m(f^r(u),f^r(v)))-T_{F^{[\ell]}}^{v,\dots,v}(\mathsf T_{q-1}(f^r(u),f^r(v)))\\
%      &=T_{F^{[\ell]}}(j^q(u,v))+T_{F^{[\ell]}}(\partial_i^{\alpha_1}u,\dots,\partial_i^{\alpha_{q-1}}u,\dots,\partial_i^{\alpha_{r-1}}u,\partial_i^{\alpha_{r}} \Delta_h^1(u-v),\partial_i^{\alpha_{r+1}} v,\dots,\partial_i^{\alpha_\ell}v)\\
%      &:={\rm IV}_{\ell-2,r,q}+\tilde{\rm IV}_{\ell-2,q},
%  \end{align*}
 where \begin{align*}
     &g^{m,r}(u,v)=(T_h\partial_i^{\alpha_1}u,\dots,T_h\partial_i^{\alpha_m}u,T_h(u-v),T_h\partial_i^{\alpha_{m+1}}u,\dots,T_h\partial_i^{\alpha_{r-1}}u,\Delta_h^1u,\partial_i^{\alpha_{r}} u,\dots,\partial_i^{\alpha_\ell}u), \ \ \ m\le r-1\\
      &g^{m,r}(u,v)=(T_h\partial_i^{\alpha_1}u,\dots,T_h\partial_i^{\alpha_{r-1}}u,\Delta_h^1u,\partial_i^{\alpha_{r}} u,\dots,\partial_i^{\alpha_m}u,u-v,\partial_i^{\alpha_{m+1}}u,\dots,\partial_i^{\alpha_\ell}u), \ \ \ m\ge r\\
     &h^{m,r}(u,v)=(T_h\partial_i^{\alpha_1}u,\dots,T_h\partial_i^{\alpha_{m-1}}u,T_h\partial_i^{\alpha_m}(u-v),T_h\partial_i^{\alpha_{m+1}}v,\dots,T_h\partial_i^{\alpha_{r-1}}v,\Delta_h^1v,\partial_i^{\alpha_{r}} v,\dots,\partial_i^{\alpha_\ell}v), \ \ \ m\le r-1\\
     &h^{r,r}(u,v)=(T_h\partial_i^{\alpha_1}u,\dots,T_h\partial_i^{\alpha_{r-1}}u,\Delta_h^1(u-v),\partial_i^{\alpha_{r}}v,\dots,\partial_i^{\alpha_\ell}v)\\
     &h^{m,r}(u,v)=(T_h\partial_i^{\alpha_1}u,\dots,T_h\partial_i^{\alpha_{r-1}}u,\Delta_h^1u,\partial_i^{\alpha_{r}} u,\dots,\partial_i^{\alpha_{m-2}}u,\partial_i^{\alpha_m-1}(u-v),\partial_i^{\alpha_{m}}v,\dots,\partial_i^{\alpha_\ell}v), \ \ \ m\ge r+1.
     % &i^q(u,v)=(\partial_i^{\alpha_1}u,\dots,\partial_i^{\alpha_q}u,u-v,\partial_i^{\alpha_{q+1}}u,\dots,\partial_i^{\alpha_{r-1}}u,\partial_i^{\alpha_{r}} \Delta_h^1u,\partial_i^{\alpha_{r+1}}u,\dots,\partial_i^{\alpha_\ell}u)\\
     % &j^q(u,v)=(\partial_i^{\alpha_1}u,\dots,\partial_i^{\alpha_{q-1}}u,\partial_i^{\alpha_q}(u-v),\partial_i^{\alpha_{q+1}}v,\dots,\partial_i^{\alpha_{r-1}}v,\partial_i^{\alpha_{r}} \Delta_h^1v,\partial_i^{\alpha_{r+1}} v,\dots,\partial_i^{\alpha_\ell}v).
 \end{align*}
 % We omit the details for choosing index $p_0,\dots,p_\ell$ as done in Lemma \ref{index pick}, since it can be covered by the proof of the following boundedness estimate.
 For ${\rm I}_{\ell,r,q},$ 
 %    \begin{align}\label{analyze 2}
 %        \lf(\int_0^{\infty}\lf(t^{-\left \{ s \right \} }\sup_{|h|\le t}\lf \|T_{F^{[\ell+2]}}(\partial_i^{\alpha_0}\Delta_h^1u,u-v,\partial_i^{\alpha_2}u,\dots,\partial_i^{\alpha_{\ell+1}}u)\r\|_p\r)^q \frac{dt}{t}\r)^{1/q}
 %    \end{align} with $\alpha_0\in\bN,\alpha_k\in\bN^+,k=2,\dots,\ell+1$ such that $\alpha_0+\sum_{k=2}^{\ell+1}\alpha_k=[s]$. 
 by applying Lemma \ref{index 3} to ${\rm I}_{\ell,r,m}$ with $(\alpha_0',\alpha_1',\dots,\alpha_{\ell}')=(0,\alpha_1,\dots,\alpha_{\ell})$ and arguing as in Remark \ref{independence}, one has
 \begin{align*}
      \lf(\int_0^{\infty}\lf(t^{-\left \{ s \right \} }\sup_{|h|\le t}\lf \|{\rm I}_{\ell,r,m}\r\|_p\r)^q \frac{dt}{t}\r)^{1/q}&\lesssim_{s,p,q}\|F\|_{X}^{\max(\|u\|_\fz,\|v\|_\fz),\,\loc}\|u-v\|_{p_{-1}}\|u\|_{B_{p_0,q}^{\{s\}}}\prod_{k=1}^\ell\|\partial^{\alpha_k}_iu\|_{p_k}\\
      &\lesssim_{s,p,q} \|F\|_{X}^{\max(\|u\|_\fz,\|v\|_\fz),\,\loc}\|u\|_{B_{p,q}^s}^{\ell+1}\|u-v\|_{B_{p,q}^s}.
\end{align*}
 For ${\rm II}_{\ell,r,m}$, by applying Lemma \ref{index 2} to $(\alpha_0',\dots,\alpha_{\ell}')=(0,\alpha_1,\dots,\alpha_\ell)$, we similarly have
  \begin{align*}
      \lf(\int_0^{\infty}\lf(t^{-\left \{ s \right \} }\sup_{|h|\le t}\lf \|{\rm II}_{\ell,r,m}\r\|_p\r)^q \frac{dt}{t}\r)^{1/q}&\lesssim_{s,p,q} \|F\|_{X}^{\max(\|u\|_\fz,\|v\|_\fz),\,\loc}\|u\|_{B_{p,q}^s}^{m-1}\|v\|_{B_{p,q}^s}^{\ell-m+1}\|u-v\|_{B_{p,q}^s}\\
&\le \|F\|_{X}^{\max(\|u\|_\fz,\|v\|_\fz),\,\loc}\max(\|u\|_{B_{p,q}^s},\|v\|_{B_{p,q}^s})^{\ell}\|u-v\|_{B_{p,q}^s}.
  \end{align*}
 
Summing over $m,$ we have
\begin{align*}
    {\rm K}^\ell_r\lesssim_{s,p,q} \|F\|_{X}^{\max(\|u\|_\fz,\|v\|_\fz),\mathrm{loc}}\lf(1+\max(\|u\|_{B_{p,q}^s},\|v\|_{B_{p,q}^s})^{[s]+1}\r)\|u-v\|_{B_{p,q}^s}.
\end{align*}
A similar conclusion holds for ${\rm L}^\ell_r$. We then conclude the proof by summing over $r,\ell$.
% \begin{align*}
%       \lf(\int_0^{\infty}\lf(t^{-\left \{ s \right \} }\sup_{|h|\le t}\lf \|{\rm III}_{\ell,r,q}\r\|_p\r)^q \frac{dt}{t}\r)^{1/q}&\lesssim \|F\|_{X}^{\max(\|u\|_\fz,\|v\|_\fz),\,\loc}\|u-v\|_{p_{-1}}\|u\|_{B_{p_0,q}^{\alpha_q+\{s\}}}\prod_{k=1}^\ell\|\partial_i^{\alpha_k'}u\|_{p_k}\\
%       &\lesssim \|F\|_{X}^{\max(\|u\|_\fz,\|v\|_\fz),\,\loc}\|u\|_{B_{p,q}^s}^{\ell+1}\|u-v\|_{B_{p,q}^s}.
%  \end{align*}
 % Let $f:x\in \bR_+\mapsto 1+x^{[s]+1},$ then we conclude the proof by summing all the terms.

    % Hence it suffices to consider terms like $$\lf(\int_0^{\infty}\lf(t^{-\left \{ s \right \} }\sup_{|h|\le t}\left \|T_{F^{[\ell+2]}}^{g(u,v)}(\partial_i^{\alpha_0}\Delta_h^1u,\partial_i^{\alpha_1}(v-u),\partial_i^{\alpha_2}u,\dots,\partial_i^{\alpha_{\ell+1}}u)\right \|_p\r)^q \frac{dt}{t}\r)^{1/q}$$ and $$\lf(\int_0^{\infty}\lf(t^{-\left \{ s \right \} }\sup_{|h|\le t}\left \|T_{F^{[\ell+1]}}^{g(u,v)}(\partial_i^{\alpha_0}\Delta_h^1(v-u),\partial_i^{\alpha_1}u,\dots,\partial_i^{\alpha_\ell}u)\right \|_p\r)^q \frac{dt}{t}\r)^{1/q}.$$  Then the remained proof is similar as in Theorem \ref{main re 1}.
\end{proof}

In the case $s\in \bN,$ it is difficult to proceed as in the case $s\notin \bN,$ since by \eqref{new eq norm} we have to estimate the second order difference, which cannot be controlled directly by $\|u\|_{B_{p,q}^s}$. Instead, we will rely on the following nonlinear interpolation given in \cite[Theorem 2]{Tartar}. 

\begin{lemma}\label{nonlinear interpolation}
    Let $A_1\subset A_0,B_1\subset B_0$ be Banach spaces, $T$ maps $A_i$ to $B_i,i=0,1.$ Suppose that there are positive non-decreasing continuous functions $f,g:\bR_+\to \bR_+$ such that
    \begin{align}\label{A0}
       \|Ta-Tb\|_{B_0}\le  f\lf(\max(\|a\|_{A_0},\|b\|_{A_0})\r)\|a-b\|_{A_0},\ \ \ a,b\in A_0
    \end{align}
    and
    \begin{align}\label{A1}
       \|Ta\|_{B_1}\le  g\lf(\|a\|_{A_0}\r)\|a\|_{A_1},\ \ \ a\in A_1,
    \end{align}
    then there is $C>0$ and a positive non-decreasing continuous function $h:\bR_+\to\bR_+$ such that $h(t)\le f(2t)^\theta g(2t)^{1-\theta}$ and for every $0<\theta<1,1\le q\le\infty,$ we have
    \begin{align*}
        \|Ta\|_{B_{\theta,q}}\le  h\lf(\|a\|_{A_0}\r)\|a\|_{A_{\theta,q}},\ \ \ a\in A_{\theta,q}.
    \end{align*}
    The definition of interpolation spaces $A_{\theta,q},B_{\theta,q}$ can be seen in \cite{M1989}.
\end{lemma}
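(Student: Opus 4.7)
The plan is to deduce the conclusion from the real interpolation $K$-method. Since $A_1\subset A_0$ and $B_1\subset B_0$ (as continuously embedded subspaces), every element of $A_{\theta,q}$ already lies in $A_0$, so $T$ is defined on every input that will appear in the argument. Recall that $\|a\|_{A_{\theta,q}}$ is equivalent to the $L^q((0,\infty),dt/t)$-norm of $t^{-\theta}K(t,a;A_0,A_1)$, where
\[
K(t,a;A_0,A_1)=\inf_{a=a_0+a_1,\; a_i\in A_i}\bigl(\|a_0\|_{A_0}+t\|a_1\|_{A_1}\bigr),
\]
and the analogous characterisation holds on the target side for $(B_0,B_1)$.

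For each fixed $t>0$ I would pick a near-optimal splitting $a=a_0(t)+a_1(t)$ with $a_i(t)\in A_i$ and $\|a_0(t)\|_{A_0}+t\|a_1(t)\|_{A_1}\le 2K(t,a;A_0,A_1)$. The trivial bound $K(t,a;A_0,A_1)\le \|a\|_{A_0}$ then yields $\|a_0(t)\|_{A_0}\le 2\|a\|_{A_0}$ and consequently $\|a_1(t)\|_{A_0}\le 3\|a\|_{A_0}$. Since $T$ is nonlinear one cannot split $T(a_0(t)+a_1(t))$ additively; instead I would use the tautology $Ta=Ta_1(t)+(Ta-Ta_1(t))$, which decomposes $Ta$ into a piece that lives in $B_1$ plus a piece that lives in $B_0$. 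Applying \eqref{A1} to $a_1(t)$ together with the monotonicity of $g$ gives $\|Ta_1(t)\|_{B_1}\lesssim g(3\|a\|_{A_0})\|a_1(t)\|_{A_1}$, while \eqref{A0} applied to the pair $(a,a_1(t))$, together with the identity $a-a_1(t)=a_0(t)$ and the monotonicity of $f$, gives $\|Ta-Ta_1(t)\|_{B_0}\lesssim f(3\|a\|_{A_0})\|a_0(t)\|_{A_0}$.

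Substituting these two estimates into the definition of the target $K$-functional yields the pointwise comparison
\[
K(t,Ta;B_0,B_1)\lesssim (f+g)(3\|a\|_{A_0})\bigl(\|a_0(t)\|_{A_0}+t\|a_1(t)\|_{A_1}\bigr)\lesssim (f+g)(3\|a\|_{A_0})\,K(t,a;A_0,A_1).
\]
Multiplying by $t^{-\theta}$, taking the $L^q((0,\infty),dt/t)$-norm (with the usual supremum modification when $q=\infty$), and invoking the $K$-method characterisation of both interpolation norms yields the desired bound with $h(x):=C(f+g)(3x)$, which is continuous, positive and non-decreasing. The main obstacle in this plan is keeping the Lipschitz modulus in \eqref{A0} independent of the splitting parameter $t$: one must ensure that the argument of $f$ only depends on $\|a\|_{A_0}$ rather than on a $t$-dependent quantity such as $\|a_1(t)\|_{A_0}$, and this is exactly where the monotonicity of $f$ combined with the a priori bound $K(t,a;A_0,A_1)\le\|a\|_{A_0}$ becomes essential. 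Apart from this, the argument is a careful but routine bookkeeping of constants.
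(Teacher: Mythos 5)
Your proof is correct, and the paper in fact supplies no argument of its own for this lemma: it simply cites Maligranda \cite[Theorem 1]{M1989} and uses the result as a black box. Your $K$-functional argument—choosing a near-optimal splitting $a=a_0(t)+a_1(t)$ of $a$, exploiting $K(t,a)\le\|a\|_{A_0}$ to keep the arguments of $f$ and $g$ bounded by $3\|a\|_{A_0}$ uniformly in $t$, and then decomposing $Ta=Ta_1(t)+(Ta-Ta_1(t))$ to transfer the bound to $K(t,Ta;B_0,B_1)$—is precisely the classical proof of Maligranda's theorem, and the bookkeeping (step (6) applies \eqref{A0} to the pair $(a,a_1(t))$ with $a-a_1(t)=a_0(t)$; step (5) applies \eqref{A1} with the monotonicity of $g$) is carried out correctly, yielding $h(x)=C\bigl(f(3x)+g(3x)\bigr)$.
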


By appealing to the above nonlinear interpolation techniques, in addition to Proposition \ref{main re 2}, the following lemma will play an instrumental role in the proof of Theorem \ref{main re 1} in the case $s\in \mathbb N$.

\begin{lemma}\label{gap}
    With the same assumptions as in Theorem \ref{main re 1}, if $[s]>\frac dp,$ one may find $\frac dp<s_0<[s]$ with $s_0\notin \bN$ such that \begin{align*}
\|F(u)\|_{B_{p,q}^s}\lesssim_{s,p,q}\|F\|_X^{\|u\|_\fz\,\loc}\lf(1+\|u\|_{B_{p,q}^{s_0}}^{[s]}\r)\|u\|_{B_{p,q}^s},
    \end{align*}
    where $X=C_b^{\lceil s\rceil}$ when $1<p<\infty$ and $X=\tilde B_{\infty,1}^1\cap \tilde B_{\infty,1}^{\lceil s\rceil}$ when $p=1,\infty.$
\end{lemma}
\begin{proof}
First we assume that $p\ne\infty.$ Via a similar argument as that in the proof of Theorem \ref{main re 1}, it suffices to estimate the terms like ${\rm I}_r^\ell$ and ${\rm J}_r^{\ell}$ in \eqref{analyze0}. Moreover, without loss of generality, we only consider the terms ${\rm J}_r^{\ell}$ and the estimate of ${\rm I}_r^\ell$ is similar. Let us recall that 
$${\rm J}_r^{\ell}=\lf(\int_0^{\infty}\lf(t^{-\left \{ s \right \} }\sup_{|h|\le t}\left \|T_{F^{[\ell]}}^{u,\dots,u}(f^r(u)) \right \|_p\r)^q \frac{dt}{t}\r)^{1/q}$$
for every $\ell=1,\dots,[s]$ and $r=1,\dots,\ell,$ where
$$f^r(u)=(T_h\partial_i^{\alpha_1}u,\dots,T_h\partial_i^{\alpha_{r-1}}u,\partial_i^{\alpha_r}\Delta_h^1 u,\partial_i^{\alpha_{r+1}}u,\dots,\partial_i^{\alpha_\ell}u)$$ with $\alpha_k\in \bN^+$ for $k=1,\dotsm, \ell$ and $\sum_{k=1}^\ell \alpha_k=[s].$
 In contrast to the previous proof, we distinguish the cases $\ell=1$ and $\ell\ge 2,$ and we will show
\begin{align}\label{case1}
    {\rm J}_1^1\lesssim_{s,p,q}\|F\|_{X}^{\|u\|_\fz,\,\loc} \|u\|_{B_{p,q}^s},
\end{align}
% and
% \begin{align}\label{case2}
%      {\rm J}_r^{2} \lesssim_{s,p,q} \|F\|_{X}^{\|u\|_\fz,\,\loc} \|u\|_{B_{p,q}^{s_1^1}}\|u\|_{B_{p,q}^s} \  \text{for some} \ \frac dp<s_0^2<[s],s_0^2\notin \bN,
% \end{align}
and for $\ell\geq2,$
\begin{align}\label{case3}
     {\rm J}_r^{\ell}\lesssim_{s,p,q}\|F\|_X^{\|u\|_\fz,\,\loc}\|u\|_{B_{p,q}^s}\prod_{k=1}^{\ell-1}\|u\|_{B_{p,q}^{s_k^{(\ell)}}}, \  \text{for some} \ \frac dp<s_k^{(\ell)}<[s],s_k^{(\ell)}\notin \bN,
\end{align}
where the case $\ell=1$ coincides with \eqref{Jr} and the case $\ell\geq2$ is stronger than \eqref{Jr}.
Indeed, if we deduce \eqref{case1} and \eqref{case3}, let $s_0'=\max_{\substack{k=1,\dots,\ell-1\\ \ell=2,\dots,[s]}} s_k^{(\ell)}$ and then we have $\frac dp<s_0'<[s],s_0'\notin \bN$ with $$ {\rm J}_r^{\ell}\lesssim_{s,p,q}\|F\|_X^{\|u\|_\fz,\,\loc} \|u\|_{B_{p,q}^{s_0'}}^{\ell-1}\|u\|_{B_{p,q}^s}, \quad \ell=1,\dots,[s],\quad k=1,\dots,\ell-1.$$   
    Similarly we can find  $\frac dp<s_0^{''}<s,s_0^{''}\notin \bN$ such that 
    $$ {\rm I}_r^{\ell}\lesssim_{s,p,q}\|F\|_X^{\|u\|_\fz,\loc} \|u\|_{B_{p,q}^{s_0^{''}}}^{\ell}\|u\|_{B_{p,q}^s}\quad \ell=1,\dots,[s],\quad k=1,\dots,\ell.$$ 
    Taking $s_0=\max(s_0',s_0^{''})$ yields the desired estimate. 
   
   \textbf{Case $\ell=1$}: Proposition \ref{Lp boundedness} and \eqref{Lp bound es 3} yields
    that $${\rm J}_1^1=\lf(\int_0^{\infty}\lf(t^{-\left \{ s \right \} }\sup_{|h|\le t}\left \|T_{F^{[1]}}^{u,u}(\partial_i^{[s]}\Delta_h^1u)\right \|_p\r)^q \frac{dt}{t}\r)^{1/q}\lesssim_{s,p,q}\|F\|_{X}^{\|u\|_\fz,\,\loc} \|u\|_{B_{p,q}^s},$$ which implies \eqref{case1}.

    \textbf{Case $\ell\ge 2$}: Let $(\alpha_0',\dots,\alpha_{\ell-1}')=(\alpha_r,\alpha_1,\dots,\alpha_{r-1},\alpha_{r+1},\dots,\alpha_{\ell})$. As in the proof of Lemma \ref{index 2}, for any $0<\delta^{(\ell)}_k<\alpha_k',$ set $\delta^{(\ell)}:=\sum_{k=1}^{\ell-1}\delta_k^{(\ell)}$ and $$p_0^{(\ell)}:=\frac{[s]p}{\alpha_0'+\delta^{(\ell)}},\, p_k^{(\ell)}:=\frac{[s]p}{\alpha_k'-\delta_k^{(\ell)}}, \ \ \ k=1,\dots,\ell-1.$$ 
    % \begin{align*}
    %     &s_k^{\ell-1}=\alpha_k'+\frac dp-\frac d{p_k^{\ell-1}}+\varepsilon_k^{\ell-1}, \\
    %     &s_0^{\ell-1}=\alpha_0'+\{s\}+\frac dp-\frac d{p_0^{\ell-1}}
    % \end{align*}
    % for some $\varepsilon_k^{\ell-1}>0$ depending on the choice of $\delta_k^{\ell-1}$. 
The new observation is that the condition $[s]>\frac dp$ allows us to choose $\delta_k^{(\ell)}$ such that  
    % \begin{align}\label{notin}
    %     s_k^{(\ell)}:=\alpha_k'+\frac dp-\frac d{p_k^{(\ell)}}+\varepsilon_k^{(\ell)}\notin \bN
    % \end{align}
    % and
    \begin{align}\label{delta6}
       \delta_k^{(\ell)}<\min\lf(\alpha_k'+\frac{([s]-\alpha_k')[s]p}{d}-[s],\alpha_k'\r).
    \end{align} 
    and
    \begin{align}\label{delta5}
        \delta^{(\ell)}>\max\lf(\lf(1-\frac{([s]-\alpha_0')p}d\r)[s]-\alpha_0',0\r).
    \end{align}
(Note: \eqref{delta6} is more restrictive than \eqref{delta2} while \eqref{delta5} is the same as \eqref{delta1}.)
 Indeed, similar to the proof of \eqref{alphak'} and \eqref{alpha0}, we can then choose $\varepsilon_k^{(\ell)}>0$ sufficiently small such that
\begin{align*}
    \frac dp<s_k^{(\ell)}:=\alpha_k'+\frac dp-\frac d{p_k^{(\ell)}}+\varepsilon_k^\ell<[s], \ \ \ s_{k}^{\ell}\notin \bN
\end{align*}
and 
\begin{align*}
    s_0^{\ell}=\alpha_0'+\{s\}+\frac dp-\frac d{p_0^{(\ell)}}<s.
\end{align*}
Since $\delta_k^{(\ell)},\varepsilon_k^{(\ell)}$ depend only on $s,p,\alpha_k',$ arguing as in Remark \ref{independence}, we then have
\begin{align*}
        {\rm J}_r^{\ell}&\lesssim_{s,p,q}\|F\|_X^{\|u\|_\fz,\,\loc} \|\partial^{\alpha_0'}u\|_{B_{p_0^{(\ell)},q}^{\{s\}}}\prod_{k=1}^{\ell-1}\|\partial^{\alpha_k'}u\|_{p_k^{(\ell)}}\\
&\lesssim_{s,p,q}\|F\|_X^{\|u\|_\fz,\,\loc}\|u\|_{B_{p,q}^s}\prod_{k=1}^{\ell-1}\|u\|_{B_{p,q}^{s_k^{(\ell)}}}
    \end{align*} 
    and hence obtain \eqref{case3}.

Now we show that we can choose $\delta_k^{(\ell)}$ properly such that \eqref{delta6} and \eqref{delta5} hold simultaneously. Note that $\alpha_k'\le [s]-1$ since $\sum_{j\ne k}\alpha_j'\ge 1$ for any $k=1,\dots,\ell-1.$ Hence by $[s]>\frac dp$ we have
 \begin{align*}
     \min\lf(\alpha_k'+\frac{([s]-\alpha_k')[s]p}{d}-[s],\alpha_k'\r)\ge \min\lf(\frac{[s]p}d-1,\alpha_k'\r)>0.
 \end{align*}
On the other hand, we have
\begin{align*}
    \lf(1-\frac{([s]-\alpha_0')p}d\r)[s]-\alpha_0'<0,
\end{align*}
which implies that
\begin{align*}
   \max\lf(\lf(1-\frac{([s]-\alpha_0')p}d\r)[s]-\alpha_0',0\r)=0.
\end{align*}
Therefore, for any fixed $0<\delta_k^{(\ell)}<\min\lf(\frac{[s]p}d-1,\alpha_k'\r),$ \eqref{delta6} and \eqref{delta5} hold simultaneously.

% \begin{align*}
    %     &, \\
    %     &s_0^{\ell-1}=\alpha_0'+\{s\}+\frac dp-\frac d{p_0^{\ell-1}}
    % \end{align*}

    For $p=\infty$, the proof is easier. Indeed, set $p_k^{(\ell)}=\infty$ for $\ell=1,\dots,[s]$ and $k=0,\dots,\ell-1,$ the proof is straightforward. When $\ell\ge 2,$ note that $\alpha_k'\le [s]-1,k=0,\dots,\ell-1.$ Hence we have \begin{align*}
        {\rm J}_r^{\ell} &\lesssim_{s,p,q}\|F\|_X^{\|u\|_\fz,\loc}\|\partial^{\alpha_0'}u\|_{B_{\infty,q}^{\{s\}}}\prod_{k=1}^{\ell-1}\|\partial^{\alpha_k'}u\|_\infty\\
        &\lesssim_{s,p,q} \|F\|_X^{\|u\|_\fz,\,\loc}  \|u\|^{\ell-1}_{B_{\infty,q}^{s-1}}\|u\|_{B_{\infty,q}^s},
    \end{align*} 
    and similarly
    \begin{align*}
        {\rm I}_r^\ell \lesssim_{s,p,q} \|F\|_X^{\|u\|_\fz,\,\loc}  \|u\|^{\ell}_{B_{\infty,q}^{s-1}}\|u\|_{B_{\infty,q}^s},
    \end{align*} 
    which concludes the proof.
\end{proof}

\begin{proof}[Proof of Theorem \ref{main re 1} when $s\in\bN$]
For $s\in \bN,$ fix one $s_1$ such that  $s<s_1<s+1$. Then applying Lemma \ref{gap}, one finds $\frac dp<s_0<[s_1]=s,s_0\notin \bN$ and $\kappa(s,p,q)>0$ such that \begin{align*}
\|F(u)\|_{B_{p,q}^{s_1}}&\le \kappa(s,p,q)\|F\|_{X}^{\|u\|_\fz,\,\loc}\lf(1+\|u\|_{B_{p,q}^{s_0}}^{[s_1]}\r)\|u\|_{B_{p,q}^{s_1}}\\
&\le \kappa(s,p,q)\|F\|_{X}^{C'(s_0,p,q)\|u\|_{B_{p,q}^{s_0}},\,\loc}\lf(1+\|u\|_{B_{p,q}^{s_0}}^{s}\r)\|u\|_{B_{p,q}^{s_1}},
\end{align*} 
where $C'(s_0,p,q)$ is the least constant $C>0$ such that $\|\cdot\|_\fz\le C\|\cdot\|_{B_{p,q}^{s_0}}.$
Since $s_0>\frac dp,s_0\notin \bN$ and $\lceil s_0\rceil=s,$ by Proposition \ref{main re 2}, there exists $\kappa'(s,p,q)>0$ such that for any $u,v\in B_{p,q}^{s_0}(\rd)$ we have \begin{align*}
    \|F(u)-F(v)\|_{B_{p,q}^{s_0}}&\lesssim \kappa'(s,p,q)\|F\|_{X}^{\max(\|u\|_\fz,\|v\|_\fz),\,\loc}\lf(1+\max(\|u\|_{B_{p,q}^{s_0}},\|v\|_{B_{p,q}^{s_0}})^{[s_0]+1}\r)\|u-v\|_{B_{p,q}^{s_0}}\\
    &\le \kappa'(s,p,q) \|F\|_{X}^{C'(s_0,p,q)\max(\|u\|_{B_{p,q}^{s_0}},\|v\|_{B_{p,q}^{s_0}}),\,\loc}\lf(1+\max(\|u\|_{B_{p,q}^{s_0}},\|v\|_{B_{p,q}^{s_0}})^{s}\r)\|u-v\|_{B_{p,q}^{s_0}}.
\end{align*} Finally, with Proposition \ref{interpolation} (\rm{iv}) in mind, we apply Lemma \ref{nonlinear interpolation} to $\theta=\frac{s-s_0}{s_1-s_0}$ and
 \begin{align*}
      &f:t\mapsto \kappa'(s,p,q)\|F\|_{X}^{C'(s_0,p,q)t,\,\loc}(1+t^s)\\
      &g:t\mapsto \kappa(s,p,q)\|F\|_{X}^{C'(s_0,p,q)t,\,\loc}(1+t^s),
 \end{align*}
 and conclude the desired estimate \eqref{bound es} by the existence of $h$ such that 
 \begin{align*}
      h(t)\le f(2t)^\theta g(2t)^{1-\theta}=\kappa(s,p,q)^{1-\theta}\kappa'(s,p,q)^\theta \|F\|_{X}^{2C'(s_0,p,q)t,\,\loc}\lf(1+(2t)^{s}\r), \ \ \ \forall t\in\bR_+.
 \end{align*}
 Finally, since $s_0$ is determined only by $s,p,$ the constant $C'(s_0,p,q)$ depends only on $s,p,q,$ hence we set $C(s,p,q):=2C'(s_0,p,q)$ and we then obtain \eqref{hdepend}.

\end{proof}

Finally we prove Theorem \ref{main re 4}. We need the following result, which is a slight improvement of \cite[Section 4]{McNLE}. 
\begin{lemma}\label{Mc}
    Consider $a=(a_j)_{j\in \bN},b=(b_j)_{j\in \bN}\subset L_\infty(\rd)$ such that for any $\alpha\in\bN^d,$ we have
    $$\|\partial^\alpha a_j\|_\infty\lesssim 2^{j|\alpha|}, \ \ \ \|\partial^\alpha b_j\|_\infty\lesssim 2^{j|\alpha|}.$$
    Let $T_{a,b}$ denote the map 
$$T_{a,b}:\lambda_\theta(C^\infty_c(\bR^d))\to \cS'(\rd), \,u\mapsto \sum_{j\in\bN}a_j\triangle_jub_j$$
    and define the quantities
    $$M_k(a):=\sup_{|\alpha|\le k,j\in \bN}2^{-j|\alpha|}\|\partial^\alpha a_j\|_\infty, \ \ \ M_k(b):=\sup_{|\alpha|\le k,j\in \bN}2^{-j|\alpha|}\|\partial^\alpha b_j\|_\infty.$$ Then we can extend $T_{a,b}$ to a map $T_{a,b}:B_{p,q}^s(\rd)\to B_{p,q}^s(\rd)$ such that
    \begin{align*}
        \|T_{a,b}\|_{B_{p,q}^s\to B_{p,q}^s}\lesssim_{s,p,q} M_{\lceil s\rceil}(a)M_{\lceil s\rceil}(b)
    \end{align*}
    for any $p,q\in [1,\infty]$ and $s>0,$
\end{lemma}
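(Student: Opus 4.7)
The plan is to estimate $\|\triangle_k T_{a,b}(u)\|_p$ for each $k\in\bN$ by expanding $\triangle_k T_{a,b}(u)=\sum_{j\in\bN}\triangle_k(a_j\triangle_ju\,b_j)$ and splitting the sum according to whether $j\ge k$ or $j<k$. Two Bernstein-type estimates on $\rd$ will be the main technical tools. First, the Fourier support of $\phi_j$ together with the quantum Young convolution inequality \eqref{Young convolution} yields the direct Bernstein bound $\|\partial^{\alpha}\triangle_ju\|_p\lesssim 2^{j|\alpha|}\|\triangle_ju\|_p$ for every $j\ge 0$. Second, for $k\ge 1$, using the partition of unity $\{\chi_i\}_{i=1}^d$ from the proof of Theorem \ref{equiv ch}, one can factor $\phi_k(\xi)=\sum_{i=1}^d(\ri\xi_i)^N\eta_{i,k}(\xi)$, where $\eta_{i,k}(\xi)=2^{-kN}\tilde\eta_i(2^{-k}\xi)$ with $\tilde\eta_i\in C_c^\infty(\bR^d)$; since $\|\check\eta_{i,k}\|_1=2^{-kN}\|\check{\tilde\eta_i}\|_1$, the Young inequality gives the converse Bernstein estimate $\|\triangle_k f\|_p\lesssim 2^{-kN}\sum_{i=1}^d\|\partial_i^N f\|_p$.

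With $N:=\lceil s\rceil$, the key bilinear estimate then follows. For $k>j\ge 0$, apply the converse Bernstein estimate to $f=a_j\triangle_ju\,b_j$, expand $\partial_i^N f$ via the Leibniz rule for the derivation $\partial_i$ on a triple product, and combine the hypotheses $\|\partial^{\alpha_1}a_j\|_\infty\le M_N(a)\,2^{j|\alpha_1|}$ and $\|\partial^{\alpha_3}b_j\|_\infty\le M_N(b)\,2^{j|\alpha_3|}$ with the direct Bernstein bound $\|\partial^{\alpha_2}\triangle_ju\|_p\lesssim 2^{j|\alpha_2|}\|\triangle_ju\|_p$ and H\"older's inequality, obtaining
\[
\|\triangle_k(a_j\triangle_ju\,b_j)\|_p\lesssim 2^{(j-k)N}M_N(a)M_N(b)\|\triangle_ju\|_p.
\]
For $k\le j$, or when $k=0$, the trivial estimate via H\"older and \eqref{uniformLP} gives the uniform bound $\|\triangle_k(a_j\triangle_ju\,b_j)\|_p\lesssim M_0(a)M_0(b)\|\triangle_ju\|_p$.

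Combining the two regimes, multiplying by $2^{sk}$, and noting that $N=\lceil s\rceil>s$, the function $g(m):=\min(2^{sm},2^{(s-N)m})$ lies in $\ell_1(\bZ)$. Young's convolution inequality for sequences then yields
\[
\|T_{a,b}(u)\|_{B_{p,q}^s}\lesssim M_N(a)M_N(b)\|g\|_{\ell_1}\|u\|_{B_{p,q}^s}
\]
for every $u\in\lt(C_c^\infty(\bR^d))$. The extension to all of $B_{p,q}^s(\rd)$ follows from density (via Proposition \ref{schwartz_dense}) when $p,q<\infty$, and by a standard weak-$*$ approximation argument in the remaining cases.

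The main technical obstacle is the converse Bernstein estimate for $\triangle_k$, since its proof requires a careful multiplier factorization using the partition of unity $\{\chi_i\}_i$ together with the dilation identity for $\|\check\eta_{i,k}\|_1$; the $k=0$ edge case, where $\phi_0$ contains the origin in its support and the factorization breaks, must be treated separately by the uniform bound.
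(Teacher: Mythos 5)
Your proof is correct, but it takes a genuinely different route from the paper's. The paper only \emph{sketches} an argument in the remark following the lemma: it cites McDonald's \cite[Lemma 4.5]{McNLE} (with the order of derivatives improved from $s+1$ to $\tilde s$), derives the two endpoint estimates $\|T_{a,b}\|_{B_{p,1}^0\to L_p}\lesssim M_0(a)M_0(b)$ and $\|T_{a,b}\|_{B_{p,1}^{s+\epsilon}\to B_{p,1}^{s+\epsilon}}\lesssim M_{\lceil s\rceil}(a)M_{\lceil s\rceil}(b)$ for small $\epsilon$, and then invokes the real interpolation property of the quantum Besov scale to reach general $q$. Your argument instead gives a direct, self-contained dyadic decomposition proof: a converse Bernstein estimate $\|\triangle_k f\|_p\lesssim 2^{-kN}\sum_i\|\partial_i^N f\|_p$ (valid for $k\ge 1$ via the factorization $\phi_k=\sum_i(\ri\xi_i)^N\eta_{i,k}$ built from the partition of unity $\{\chi_i\}$ of Theorem \ref{equiv ch}), a direct Bernstein bound $\|\partial_i^{\alpha}\triangle_ju\|_p\lesssim 2^{j\alpha}\|\triangle_ju\|_p$, the Leibniz rule for $\partial_i$ on $\rd$ (which holds because $T_{ze_i}$ is a $\ast$-automorphism), and the discrete Young inequality with the $\ell_1(\bZ)$ kernel $g(m)=\min(2^{sm},2^{(s-N)m})$ with $N=\lceil s\rceil>s$. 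This avoids both the reliance on McDonald's intermediate lemma and the interpolation step, and obtains the full range $p,q\in[1,\infty]$ in one pass. Your treatment of the $k=0$ case (where the converse Bernstein factorization breaks because $\phi_0$ contains the origin) by simply absorbing it into the uniform $k\le j$ regime is exactly right. The only technical point worth spelling out is the well-definedness of the Leibniz expansion $\partial_i^N(a_j\triangle_ju\,b_j)$: since all $\partial^\alpha a_j,\partial^\alpha b_j\in L_\infty$ by hypothesis and $\triangle_ju$ has compactly supported Fourier transform, each term is in $L_p$; this and Proposition \ref{partial prop} justify the iterated Leibniz rule.
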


\begin{remark}
    In \cite[Theorem 4.3]{McNLE}, McDonald proved that \begin{align*}
        \|T_{a,b}\|_{B_{p,q}^s\to B_{p,q}^s}\lesssim_{s,p,q} M_{s+2}(a)M_{s+2}(b),
    \end{align*}
    and we observe that the bound can be improved to $M_{\lceil s\rceil}(a)M_{\lceil s\rceil}(b)$. Explicitly, one first easily verify that the bound $M_{s+1}(a)M_{s+1}(b)$ in \cite[Lemma 4.5]{McNLE} can be improved to $M_{\tilde s}(a)M_{\tilde s}(b),$ where $\tilde s=\left\{\begin{matrix} 
   \lceil s\rceil, \ \ \ s\notin \bN\\  
  s, \ \ \ \ \ s\in \bN 
\end{matrix}\right.$ This implies that
    \begin{align*}
        &\|T_{a,b}\|_{B_{p,1}^0\to L_p}\lesssim_p M_0(a)M_0(b),\\
        &\|T_{a,b}\|_{B_{p,1}^{s+\epsilon}\to B_{p,1}^{s+\epsilon}}\lesssim_{s,p} M_{\lceil s\rceil}(a)M_{\lceil s\rceil}(b)
    \end{align*}
    for every $\epsilon\in (0,1)$ such that $s+\epsilon<\lceil s\rceil.$ Together with Proposition \ref{interpolation} \rm{(iv)},  we prove Lemma \ref{Mc}. We omit the details of the arguments here.
\end{remark}

\begin{proof}[Proof of Theorem \ref{main re 4}]
    Note that when $F\in W_0(\bR),$ we have $$F(x)=(2\pi)^{-d}\int_{\bR}\cF F(\xi)e^{\ri x\xi}d\xi.$$ Hence by the condition $F(0)=0,$  $$F(u)=(2\pi)^{-d}\int_{\bR}\cF F(\xi)(e^{\ri \xi u}-1)\,d\xi.$$ The well-known Duhamel formula implies that
  \begin{align*}
      e^{i\xi u}-1=G(S_0u)S_0u+\int_0^1\sum_{j\in\bN^+}e^{\ri \theta \xi S_{j-1}u}(\triangle_j u)e^{\ri (1-\theta) \xi S_{j-1}u}\,d\theta.
  \end{align*}
  Here we set $S_j:=\sum_{k=0}^j\triangle_k$ and $G:\eta\mapsto\sum_{n\in\bN^+}\frac{(\ri\eta)^n}{n!}\xi^{n-1}.$ 
  Consider the following operator $$T_\theta: v\mapsto \sum_{j\in\bN^+}e^{\ri \theta \xi S_{j-1}u}(\triangle_j v)e^{\ri (1-\theta) \xi S_{j-1}u}.$$ By Lemma \ref{Mc} and \cite[Lemma 6.7]{McNLE}, we obtain the inequality
    \begin{align*}
        \|e^{i\xi u}-1\|_{B_{p,q}^s}&\lesssim_{s,p,\|u\|_\fz}\int_0^1 \sup_{|\alpha|\le \lceil s\rceil}\|\partial^\alpha e^{\ri \theta \xi u}\|_\fz \sup_{|\beta|\le \lceil s\rceil}\|\partial^\beta e^{\ri (1-\theta) \xi u}\|_\fz \, d\theta \,\|u\|_{B_{p,q}^s}\\
        &\lesssim_{s,\|u\|_\infty} (1+\xi)^{\lceil s\rceil}\,\|u\|_{B_{p,q}^s}.
    \end{align*}
    Note that $$\|F\|_{W_0\cap W_{\lceil s\rceil}}\sim_s \int_{\bR}|\cF F(\xi)|\,(1+\xi)^{\lceil s\rceil}\,d\xi,$$ hence we conclude the proof.
\end{proof}
\begin{remark}
    Our method is also valid for the nonlinear estimate on the quantum Sobolev space. Indeed, from the definition of quantum Sobolev space and the quantum chain rule \eqref{chain rule formula}, one may obtain, by an argument analogous to that in the quantum Besov case, the estimate
$$\|F(u)\|_{W_p^m}\lesssim_{s,p,q,F,u} \|u\|_{W_p^m}$$ for any $m>\frac dp$ under the same assumptions.  We omit the details here.
\end{remark}

\section{Well-posedness theory for noncommutative Allen-Cahn equations}\label{s5}
In this section, we apply Theorem \ref{s smaller than 1}, Theorem \ref{main re 1},  and the contraction mapping principle to prove Theorem \ref{posedness}.

\begin{proof}[Proof of Theorem \ref{posedness}]
We will prove Theorem \ref{posedness} through six steps. Without loss of generality, we may assume that $s\notin \bN,$ otherwise we replace $s$ by $s-\varepsilon$ for a sufficiently small $\varepsilon>0$. 
    
     \textbf{Step 1.} We first show the existence of a local mild solution. Fix $0<\delta\le 1$ and let $T$ be a constant to be determined later. Consider the set $$\cD_\delta:=\{u\in L_\infty([0,T];B_{p,2}^s(\rd)):u(0)=u_0,\, \sup_{t\in [0,T]}\|u(t)\|_{B_{p,2}^s}\le \|u_0\|_{B_{p,2}^s}+\delta\}$$ equipped with metric $d(u,v)=\sup_{t\in [0,T]}\|u(t)-v(t)\|_{p}.$ It is straightforward to see $(\cD_\delta,d)$ is a complete metric space. Let $\Psi$ be the solution map defined by $$\Psi(u)(t)=e^{t\Delta}u_0+\int_0^te^{(t-\tau)\Delta}F(u(\tau))\,d\tau, \ \ \ u\in \mathcal D_\delta.$$ We aim to choose $T>0$ such that $\Psi$ is a contraction on $(\cD_\delta,d).$

     Note that by Theorem \ref{main re 1}, there exists a continuous non-decreasing function $h$ depending only on $s,p,q,F$ such that for any $u\in \cD_\delta,$
     \begin{align*}
         \|F(u(t))\|_{B_{p,2}^s}\le h(\|u_0\|_{B_{p,2}^s}+1)(\|u_0\|_{B_{p,2}^s}+1),
     \end{align*} 
   Hence by Proposition \ref{heat group} \rm{(i)}, for any $0\le t\le T$ we have 
     \begin{align*}
         \|\Psi u\|_{B_{p,2}^s}&\le \|e^{t\Delta}u_0\|_{B_{p,2}^s}+\int_0^t \|e^{(t-\tau)\Delta}F(u(\tau))\|_{B_{p,2}^s}d\tau\\
         &\le \|u_0\|_{B_{p,2}^s}+Th(\|u_0\|_{B_{p,2}^s}+1)(\|u_0\|_{B_{p,2}^s}+1).
     \end{align*}
     On the other hand, by Corollary \ref{Lipes}, for any $x,y\in L_p(\rd),$ we have $$\|F(x)-F(y)\|_p\le C_p\|F\|_{\mathrm{Lip}}^{\|u_0\|_{B_{p,2}^s}+1,\,\mathrm{loc}}\|x-y\|_p.$$ Hence we have 
     \begin{align*}
         d(\Psi u,\Psi v)&\le \sup_{t\in [0,T]}\int_0^t\|F(u(\tau))-F(v(\tau))\|_p\,d\tau\\
         &\le C_p\|F\|_{C^n_b}^{\|u_0\|_{B_{p,2}^s}+1,\,\mathrm{loc}} Td(u,v)
     \end{align*}
     Take $T=T(\|u_0\|_{B_{p,2}^s}+1),$ where we set
\begin{align}\label{TM}
T(M):=\lf(\max\lf(h(M),2C_p\|F\|_{C^n_b}^{M,\,\mathrm{loc}}\r)\r)^{-1},
\end{align} then $\Psi$ is a contraction on $(\cD_\delta,d).$ By the contraction mapping principle, there exists a unique $u\in \cD_\delta$ such that $u$ is a mild solution of \eqref{integral AC}. 

    \textbf{Step 2.} We then show that $u\in C([0,T];B_{p,2}^s(\rd))$ is a unique mild solution. For any $0\le t_1<t_2\le T,$ fix a $t_1$ and we have \begin{align*}
        u(t_2)-u(t_1)&=(e^{t_2\Delta}-e^{t_1\Delta})u_0+\int_{t_1}^{t_2}e^{(t_2-\tau)\Delta}F(u(\tau))\,d\tau+\int_0^{t_1}(e^{(t_2-\tau)\Delta}-e^{(t_1-\tau)\Delta})F(u(\tau))\,d\tau\\
        &=(e^{(t_2-t_1)\Delta}-1)e^{t_1\Delta}u_0+\int_{t_1}^{t_2}e^{(t_2-\tau)\Delta}F(u(\tau))\,d\tau+(e^{(t_2-t_1)\Delta}-1)\int_0^{t_1}e^{(t_1-\tau)\Delta}F(u(\tau))\,d\tau\\
        &=(e^{(t_2-t_1)\Delta}-1)u(t_1)+\int_{t_1}^{t_2}e^{(t_2-\tau)\Delta}F(u(\tau))\,d\tau.
    \end{align*}
Taking $B_{p,2}^{s}(\rd)$ norm at both sides and applying Proposition \ref{heat group} \rm{(i)}, we have
\begin{align*}
    \lim_{t_2\searrow t_1}\|u(t_2)-u(t_1)\|_{B_{p,2}^{s}(\rd)}&\le \lim_{t_2\searrow t_1}\lf(\|(e^{(t_2-t_1)\Delta}-1)u(t_1)\|_{B_{p,2}^{s}(\rd)}+\lf\| \int_{t_1}^{t_2}e^{(t_2-\tau)\Delta}F(u(\tau))\,d\tau
 \r\|_{B_{p,2}^{s}(\rd)}\r)\\
    &\le \lim_{t_2\searrow t_1}\lf(\|(e^{(t_2-t_1)\Delta}-1)u(t_1)\|_{B_{p,2}^{s}(\rd)}+C(t_2-t_1)(\|u_0\|_{B_{p,2}^s}+1)\r)\\
    &=0.
\end{align*}
Hence we deduce that $u\in C([0,T];B_{p,2}^{s}(\rd)).$ 

Now we show that $u$ is a unique mild solution of \eqref{Allen Cahn} in $C([0,T];B_{p,2}^s(\rd)).$ Suppose $v$ is another mild solution in $C([0,T];B_{p,2}^s(\rd)).$ By the continuity, there exists $0<T_1<T$ such that $$\sup_{t\in [0,T_1]}\|v(t)\|_{B_{p,2}^s}\le \|u_0\|_{B_{p,2}^s}+\delta.$$ Utilizing the results from {\bf Step 1}, we deduce that
\begin{align*}
v(t)=u(t), \ \ \ \mathrm{when} \ \ 0\leq t \leq T_1.
\end{align*}
Thus by continuity of $u$ and $v,$ there exists a maximal time $T^*\in[0,T]$ such that
\begin{align*}
v(t)=u(t), \ \ \ \mathrm{for \ \ all} \ \ 0\leq t \le T^*.
\end{align*}
If $T^*=T$, we conclude the proof. Otherwise
consider the following Allen-Cahn equation starting with the initial data
$u(T^*)$.
\begin{align}\label{e3.aa}
w(t)=e^{(t-T^*)\Delta}u(T^*)+\int^t_{T^*}e^{(t-\tau)\Delta}F(u(\tau))\,d\tau,\ \ \ w(T^*)=u(T^*).
\end{align}
Repeating the process above, there exists a small positive number $\epsilon$ such that $T^*<T^*+\epsilon <T$ and the equation \eqref{e3.aa} has a unique mild solution $w$ in
$C([T^*,T^*+\epsilon];B_{p,2}^s(\rd))$.
Moreover, the solutions $u,v$ satisfy
$$\sup_{t\in [T^*,T^*+\epsilon]}\|u(t)\|_{B_{p,2}^s}\le \|u(T^*)\|_{B_{p,2}^s}+\delta, \ \ \
 \sup_{t\in [T^*,T^*+\epsilon]}\|v(t)\|_{B_{p,2}^s}\le \|u(T^*)\|_{B_{p,2}^s}+\delta.$$
Therefore,
\begin{align*}
v(t)=u(t)=w(t), \ \ \ \mathrm{when} \ \ t\in [T^*,T^*+\epsilon],
\end{align*}
which contradicts the maximality of $T^*$. This completes the proof of the above claim.

\textbf{Step 3.} Now we prove the existence of maximal existence time $T_{u_0}.$ We consider the new equation \begin{align*}
v(t)=e^{(t-T)\Delta}u(T)+\int^t_{T}e^{(t-\tau)\Delta}F(u(\tau))\,d\tau,\ \ \ v(T)=u(T).
\end{align*} Since $\|u(T)\|_{B_{p,2}^s}\le \|u_0\|_{B_{p,2}^s}+\delta,$ repeating the procedure in \textbf{Step 1}, we can find $T'>T$ and a unique solution $v\in C([T,T'];B_{p,2}^s(\rd)).$ Defining \begin{eqnarray*}\tilde{u}(t):=
\lf\{\begin{array}{ll}
u(t),  \ \ \mathrm{when} \ \ t\le T;\\[5pt]
v(t), \ \ \mathrm{when} \ \  T\leq t\le T'.
\end{array}\r.,
\end{eqnarray*}
then we extend the solution to $[0,T_1].$ Step by step, we will extend the solution to $[0,T_{u_0})$ for a maximal time $T_{u_0},$ which is also denoted by $u.$ Hence we have $u\in C([0,T_{u_0});B_{p,2}^s(\rd)).$ Moreover, if $T_{u_0}<\infty,$ then we should have $$\limsup_{t\nearrow T_{u_0}}\|u(t)\|_{B_{p,2}^s}=\infty.$$ Otherwise there exists $M>0$ such that $\sup_{0\le t<T_{u_0}}\|u(t)\|_{B_{p,2}^s}\le M.$ Take $\rho=T(M)$ with $T(M)$ defined in \eqref{TM}. Since $\|u(T_{u_0}-\frac\rho 2)\|_{B_{p,2}^s}\le M,$ we can extend $u$ to $[T_{u_0}-\frac \rho 2,T_{u_0}+\frac \rho 2]$ similarly as in \textbf{Step 1}, which contradicts the maximality of $T_{u_0}.$

    \textbf{Step 4.}  We show that $u\in \bigcap_{0<\alpha<n+1} C((0,T_{u_0});B_{p,2}^\alpha(\rd))$ by an iteration argument. Let $K\in \bN$ be the largest integer such that $s+K<n+1.$ We may assume that $K\in \bN^+,$ since $K=0$ is simpler. Note that for any $0<T_2<T_3<T_{u_0}$ and $t\in [T_2,T_3],$ from \eqref{heatdiff} we have
\begin{align*}
    \left \| \int_0^te^{(t-\tau)\Delta}F(u(\tau))d\tau \right \|_{B_{p,2}^{s+1}}&\le  \int_0^t\left \|e^{(t-\tau)\Delta}F(u(\tau))\right \|_{B_{p,2}^{s+1}}d\tau \\
&\lesssim_{s,p} \int_0^t\lf(1+(t-\tau)^{-\frac {1}2}\r)\left \|F(u(\tau))\right \|_{B_{p,2}^{s}}\,d\tau,
\end{align*}
  hence by Theorem \ref{main re 1} we have \begin{align*}
         \sup_{t\in [T_2,T_3]}\left \|u(t)\right \|_{B_{p,2}^{s+1}}&\leq \sup_{t\in [T_2,T_3]}\left \|e^{t\Delta}u_0\right \|_{B_{p,2}^{s+1}}+\sup_{t\in [T_2,T_3]}\left \| \int_0^te^{(t-\tau)\Delta}F(u(\tau))\,d\tau \right \|_{B_{p,2}^{s+1}}\\
        &\lesssim_{s,p,F} (1+T_2^{-\frac 12})\left \|u_0\right \|_{B_{p,2}^{s}}+\lf(T_3+T_3^{\frac 12}\r)\sup_{t\in [T_2,T_3]}h(\|u(t)\|_{B_{p,2}^s})\|u(t)\|_{B_{p,2}^{s}}.
    \end{align*}
    This implies that $u\in L_{\infty}([T_2,T_3];B_{p,2}^s(\rd)).$ As in \textbf{Step 2}, one may show $u\in C([T_2,T_3];B_{p,2}^{s+1}(\rd)).$ Since $T_2,T_3$ is arbitrary, $u\in C((0,T_{u_0});B_{p,2}^{s+1}(\rd)).$ Step by step, one may show $u\in C((0,T_{u_0});B_{p,2}^{s+K}(\rd)).$

    For $s+K<\alpha<n+1,$ note that by Theorem \ref{main re 1} and \eqref{heatdiff},  
    \begin{align*}
         \sup_{t\in [T_2,T_3]}\left \|u(t)\right \|_{B_{p,2}^{\alpha}}&\leq \sup_{t\in [T_2,T_3]}\left \|e^{t\Delta}u_0\right \|_{B_{p,2}^{\alpha}}+\sup_{t\in [T_2,T_3]}\left \| \int_0^te^{(t-\tau)\Delta}F(u(\tau))\,d\tau \right \|_{B_{p,2}^{\alpha}}\\
        &\lesssim_{s,p,F} (1+T_2^{-\frac {\alpha-s}2})\left \|u_0\right \|_{B_{p,2}^{s}}+\lf(T_3+T_3^{\frac{s+K+1-\alpha}{2}}\r)h(\|u(t)\|_{B_{p,2}^{s+K}})\sup_{t\in [T_2,T_3]}\|u(t)\|_{B_{p,2}^{s+K}}.
    \end{align*}
Hence we similarly have $u\in C((0,T_{u_0});B_{p,2}^{\alpha}(\rd)).$

\textbf{Step 5.} We will show $u$ is indeed a strong solution when $n\ge 2$. That is, fix an arbitrary $0<t<T_{u_0},$ then we have 
\begin{align*}
    \lim_{h\to 0}\lf\|\frac{u(t+h)-u(t)}{h}-\Delta u(t)-F(u(t))\r\|_{p}=0.
\end{align*} 
Set $f:=F(u),$ then for any $t\in (T_1,T_2)$ and $T_1-t<h<T_2-t$ we have 
\begin{align*}
    \frac{u(t+h)-u(t)}{h}
    =&\frac{e^{(t-T_1+h)\Delta}u(T_1)-e^{(t-T_1)\Delta}u(T_1)}{h}+\int_{T_1}^t \frac{e^{(t-\tau+h)\Delta}f(\tau)-e^{(t-\tau)\Delta}f(\tau)}{h}\, d\tau \\
  &\quad +\frac 1h\int_t^{t+h} e^{(t-\tau+h)\Delta}f(\tau)\, d\tau
    \\
    =&\frac{e^{h\Delta}-1}{h}\lf(e^{(t-T_1)\Delta}u(T_1)-\int_{T_1}^t e^{(t-\tau)\Delta}f(\tau)\, d\tau\r)+\frac 1h\int_t^{t+h}e^{(t-\tau+h)\Delta}f(\tau)\, d\tau
    \\
    =&\frac{e^{h\Delta}-1}{h}u(t)+\frac 1h\int_t^{t+h}e^{(t-\tau+h)\Delta}f(\tau)\, d\tau.
\end{align*}
For the first term, note that since $u\in \bigcap_{0<\alpha<n+1}L_{\fz}([T_1,T_2];B_{p,2}^\alpha(\rd))\subset L_{\fz}([T_1,T_2];B_{p,2}^2(\rd)),$ from Proposition \ref{heat group} \rm{(ii)} and \cite[Proposition 2.1.4 \rm{(iii)}]{Lunardi}, we have
\begin{align*}
    \lim_{h\to 0}\lf\|\frac{e^{h\Delta}-1}{h}u(t)-\Delta u(t)\r\|_{p}=0.
\end{align*}
For the second term, we rewrite
\begin{align*}
    \frac 1h\int_t^{t+h}e^{(t-\tau+h)\Delta}f(\tau)\, d\tau&=\frac 1h\lf(\int_t^{t+h}e^{(t-\tau+h)\Delta}f(\tau)-f(t)\, d\tau\r)+f(t).
\end{align*}
Then Theorem \ref{main re 1} and Corollary \ref{Lipes} imply that $f\in \bigcap_{0<\alpha<n}L_{\fz}([T_1,T_2];B_{p,2}^\alpha(\rd))$ and $f\in C((0,T_{u_0});L_p(\rd)).$ Moreover,
\begin{align*}
   &\lf\|\frac 1h\int_t^{t+h}e^{(t-\tau+h)\Delta}f(\tau)-f(t)\, d\tau\r\|_{p}\\
   \le& \frac 1h\int_t^{t+h}\big\|e^{(t-\tau+h)\Delta}f(\tau)-f(t)\big\|_{p}\, d\tau
   \\ \le& \frac 1h\int_t^{t+h} \big\|e^{(t-\tau+h)\Delta}f(\tau)-e^{(t-\tau+h)\Delta}f(t)\big\|_{p}\, d\tau\\
   &\ \ \ +\frac 1h\int_t^{t+h}\big\|e^{(t-\tau+h)\Delta}f(t)-e^{h\Delta}f(t)\big\|_{p}\, d\tau\\
   &\ \ \ +\frac 1h\int_t^{t+h}\big\|e^{h\Delta}f(t)-f(t)\big\|_{p}\, d\tau\\
   \le& \frac{1}{h}\int_t^{t+h}\big\|f(\tau)-f(t)\big\|_{p}\,d\tau
    +\frac{1}{h}\int_t^{t+h}\lf\|\Big(e^{(t-\tau)\Delta}-I\Big)f(t)\r\|_{p}\,d\tau\\
   &\ \ \ +\lf\|\lf(e^{h\Delta}-I\r)f(t)\r\|_{p}.
\end{align*}
Thus we deduce that
\begin{align*}
   &\lim_{h\rightarrow0}\lf\|\frac 1h\int_t^{t+h}e^{(t-\tau+h)\Delta}f(\tau)-f(t)\, d\tau\r\|_{p}\\
   \le& \lim_{h\rightarrow0}\frac{1}{h}\int_t^{t+h}\big\|f(\tau)-f(t)\big\|_{p}ds
+\lim_{h\rightarrow0}\frac{1}{h}\int_t^{t+h}\lf\|\Big(e^{(t-\tau)\Delta}-I\Big)f(t)
\r\|_{p}d\tau\\&\quad+\lim_{h\rightarrow 0}\lf\|\lf(e^{h\Delta}-I\r)f(t)\r\|_{p}=0.
\end{align*}
Altogether, we have 
 \begin{equation*}
   \partial_t u\in L_{\fz}([T_1,T_2];L_p(\rd)),
 \ \ \ \mathrm{for\ all} \ 0<T_1<T_2<T_{u_0},
 \end{equation*}
and $$\partial_t u(t)=\Delta u(t)+ F(u(t)).$$
Since $\Delta u,F(u)\in C([0,T_{u_0});L_p(\rd)),$ we have $\partial_t u\in C((0,T_{u_0});L_p(\rd)),$ which implies that $u\in C^1((0,T_{u_0});L_p(\rd)).$

\textbf{Step 6.} Finally, we show that $u$ is a global mild solution when $F$ is Lipschitz. Assume that $T_{u_0}<\fz.$
Since $0<s<n$ and $s\notin \bN,$ we have $0<\{s\}<1$ and $u\in C([0,T_{u_0});B_{p,2}^{\{s\}}(\rd)).$ If $T_{u_0}<\infty,$ for any $0\le t<T_{u_0},$ by Theorem \ref{s smaller than 1} we have 
\begin{align*}
    \|u(t)\|_{B_{p,2}^{\{s\}}}\le \|u_0\|_{B_{p,2}^{\{s\}}}+C(s,p,q)\|F\|_{\mathrm{Lip}}\int_0^t\|u(\tau)\|_{B_{p,2}^{\{s\}}}d\tau.
\end{align*}
Hence we apply the Gronwall inequality to $t\mapsto \|u(t)\|_{B_{p,2}^{\{s\}}}$ and obtain that 
\begin{align*}
    \|u(t)\|_{B_{p,2}^{\{s\}}}\le \|u_0\|_{B_{p,2}^{\{s\}}}\exp\lf(C(s,p,q)\|F\|_{\mathrm{Lip}}T_{u_0}\r).
\end{align*}
Hence $u\in L_\infty([0,T_{u_0});B_{p,2}^{\{s\}}(\rd)).$ By an iteration argument as in \textbf{Step 4}, we have $u\in L_\infty([0,T_{u_0});B_{p,2}^{s}(\rd)).$ However, we also have $$\limsup_{t\nearrow T_{u_0}}\|u(t)\|_{B_{p,2}^s}=\infty,$$ which yields a contradiction.

\end{proof}

\textbf{Acknowledgements.} This work is partially supported by National Natural Science Foundation of China (No. 12071355, No. 12325105, No. 12031004 and No. W2441002).

%\textbf{Acknowledgements.} The authors would like to express their
%deep thanks to the referees for their very careful reading and useful
%comments which do improve the presentation of this article.

\bigskip

\end{document}